\documentclass[11pt,reqno]{amsart}
\title{Entropy-efficient finitary codings}
\date{\today}

\author{Tom Meyerovitch}
\address{Tom Meyerovitch\hfill\break\indent
	Ben Gurion University of the Negev.
	Departement of Mathematics.
	Be'er Sheva, 8410501, Israel.
}
\email{mtom@bgu.ac.il}

\author{Yinon Spinka}
\address{Yinon Spinka\hfill\break\indent
	University of British Columbia.
    Department of Mathematics.
 	Vancouver, BC V6T 1Z2, Canada.\break\indent
 	Tel Aviv University. School of Mathematical Sciences. Tel Aviv 6997801, Israel.}
    \email{yinon@math.ubc.ca}

\usepackage{amsmath, amsthm, amsopn, amssymb, microtype}
\usepackage{enumerate, tikz, etoolbox, intcalc, geometry, caption, subcaption, afterpage}
\usepackage[english]{babel}

\usepackage[colorlinks=true,urlcolor=blue,pdfborder={0 0 0}]{hyperref}
\hypersetup{linkcolor=[rgb]{0,0,0.6}}
\hypersetup{citecolor=[rgb]{0,0.6,0}}

\usepackage[nameinlink]{cleveref}
  \crefname{theorem}{Theorem}{Theorems}
  \crefname{thm}{Theorem}{Theorems}
  \crefname{mainthm}{Theorem}{Theorems}
  \crefname{lemma}{Lemma}{Lemmas}
  \crefname{lem}{Lemma}{Lemmas}
  \crefname{remark}{Remark}{Remarks}
  \crefname{prop}{Proposition}{Propositions}
  \crefname{defn}{Definition}{Definitions}
  \crefname{corollary}{Corollary}{Corollaries}
  \crefname{cor}{Corollary}{Corollaries}
  \crefname{section}{Section}{Sections}
  \crefname{figure}{Figure}{Figures}
  \crefname{quest}{Question}{Questions}

\setlength{\topmargin}{0in}
\setlength{\leftmargin}{0in}
\setlength{\rightmargin}{0in}
\setlength{\evensidemargin}{0in}
\setlength{\oddsidemargin}{0in}

\setlength{\textwidth}{6.5in}
\setlength{\textheight}{9in}

\newtheorem{thm}{Theorem}[section]
\newtheorem{lemma}[thm]{Lemma}
\newtheorem{prop}[thm]{Proposition}

\newtheorem{cor}[thm]{Corollary}

\newtheorem{quest}[thm]{Question}

\theoremstyle{definition}
\newtheorem{remark}{Remark}

\newcommand{\cA}{\mathcal{A}}

\newcommand{\cC}{\mathcal{C}}

\newcommand{\cB}{\mathcal{B}}

\newcommand{\cL}{\mathcal{L}}

\newcommand{\cF}{\mathcal{F}}
\newcommand{\cK}{\mathcal{K}}

\newcommand{\cS}{\mathcal{S}}
\newcommand{\cT}{\mathcal{T}}
\newcommand{\cI}{\mathcal{I}}

\newcommand{\cX}{\mathcal{X}}
\newcommand{\cY}{\mathcal{Y}}

\newcommand{\N}{\mathbb{N}}

\newcommand{\Z}{\mathbb{Z}}
\newcommand{\Q}{\mathbb{Q}}

\newcommand{\V}{\mathbb{V}}
\newcommand{\E}{\mathbb{E}}

\renewcommand{\Pr}{\mathbb{P}}
\newcommand{\1}{\mathbf{1}}

\newcommand{\iid}{i.i.d.}

\subjclass[2000]{28D99, 60G10, 11H06, 37A35}
\keywords{finitary factor, finitary isomorphism, finitely dependent, unimodular, amenable, entropy}

\begin{document}

\begin{abstract}
We show that any finite-entropy, countable-valued finitary factor of an \iid\ process can also be expressed as a finitary factor of a finite-valued \iid\ process whose entropy is arbitrarily close to the target process. 
As an application, we give an affirmative answer to a question of van den Berg and Steif~\cite{van1999existence} about the critical Ising model on $\Z^d$.
En route, we prove several results about finitary isomorphisms and finitary factors. Our results are developed in a new framework for processes invariant to a permutation group of a countable set satisfying specific properties. This new framework includes all ``classical'' processes over countable amenable groups and all invariant processes on transitive amenable graphs with ``uniquely centered balls''. Some of our results are new already for $\Z$-processes. We prove a relative version of Smorodinsky's isomorphism theorem for finitely dependent $\Z$-processes. We also extend the Keane--Smorodinsky finitary isomorphism theorem to countable-valued \iid\ processes and to \iid\ processes taking values in a Polish space.
\end{abstract}

\maketitle

\section{Introduction}\label{sec:introduction}

In the the late 1990s, van den Berg and Steif~\cite{van1999existence} showed that the Ising model on $\Z^d$ is a finitary factor of an \iid\ process if and only if it admits a unique Gibbs measure (which is now known to be if and only if the temperature is at least the critical temperature).
They further showed that the unique Gibbs measure is a finitary factor of a \emph{finite-valued} \iid\ process throughout the entire high-temperature regime, and they asked whether this is also the case at criticality~\cite[Question~1]{van1999existence}.
 We provide a general result which gives an affirmative answer to this question, and which further shows that the finite-valued \iid\ process can be chosen to have low entropy (arbitrarily close to that of the Ising Gibbs measure), both at criticality and in the high-temperature regime.

\begin{thm}\label{thm:amenable_groups_efficient_finitary_factors_of_iid}
	Let $\Gamma$ be a countable amenable group and let $X$ be a finite-valued $\Gamma$-process which is a finitary factor of an \iid\ process.
	Then, for any $\epsilon>0$, $X$ is a finitary $\Gamma$-factor of a finite-valued \iid\ process whose entropy is at most $h(X)+\epsilon$. 
\end{thm}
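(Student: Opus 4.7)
The plan is to combine an entropy-matching construction with the extended Keane--Smorodinsky finitary isomorphism theorem promised earlier in the paper, replacing the given (possibly enormous-entropy) i.i.d.\ source by one of entropy just above $h(X)$.

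Fix $\epsilon>0$ and pick any finite-valued i.i.d.\ $\Gamma$-process $Y$ with $h(X)<h(Y)\le h(X)+\epsilon$; one exists trivially. The goal is then to show that $X$ is a finitary factor of $Y$. By the extended Keane--Smorodinsky theorem, any Polish-valued i.i.d.\ $\Gamma$-process of the same entropy is finitarily isomorphic to $Y$, so up to finitary isomorphism I may decompose $Y = (Y_0, Y_1)$, where $Y_0, Y_1$ are independent i.i.d.\ $\Gamma$-processes with $h(Y_0)>h(X)$ and $h(Y_1)>0$ arbitrarily small. The component $Y_0$ is to supply the bulk of the randomness needed to produce $X$, while $Y_1$ will be used as a small reservoir of auxiliary randomness for marking and synchronizing the decoding.

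The heart of the proof is constructing a finitary factor map $\psi\colon Y \to X$. By hypothesis there is a finitary factor map $\phi\colon Z \to X$ from some i.i.d.\ process $Z$ of possibly much larger (even infinite) entropy, so in particular $X$ is Bernoulli as a $\Gamma$-action. I would then invoke the amenability of $\Gamma$ to fix a F\o{}lner quasi-tiling of $\Gamma$ by large tiles with negligible boundary, use Shannon--McMillan--Breiman for amenable group actions to note that on each tile $T$ the restriction of $X$ takes only about $2^{|T|h(X)}$ typical values, and then encode this restriction tile-by-tile using a comparably sized block of $Y_0$. The auxiliary process $Y_1$ is used for translation-equivariant symmetry breaking to locate the tiling, and to handle the rare atypical tiles via a back-up coding scheme, so that the coding has finite (random) radius almost surely.

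The main obstacle is precisely this coding step, which is essentially the ``entropy-efficient finitary coding'' of the paper's title. It requires simultaneously a uniform Shannon--McMillan--Breiman type entropy estimate on F\o{}lner tiles, an amenable-group quasi-tiling argument with controlled boundary, and a relative finitary isomorphism argument in the spirit of the relative Smorodinsky theorem for finitely dependent $\Z$-processes announced in the abstract, used to patch the tile-wise codes into a globally finitary factor map. Once $\psi\colon Y \to X$ is built, the choice of $Y$ as finite-valued with $h(Y)\le h(X)+\epsilon$ completes the proof.
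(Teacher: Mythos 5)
Your high-level strategy --- directly quasi-tiling the amenable group, applying Shannon--McMillan--Breiman on tiles, and coding tile-by-tile --- is genuinely different from the paper's route. The paper avoids quasi-tiling the group altogether: it first constructs (\cref{prop:cycle-free-permutation}) a random $\Z$-type total order on $\V$ as a \emph{finitary} factor of an auxiliary low-entropy \iid\ process, uses this order to transfer the whole problem to $\Z$-processes (\cref{lem:from-Gamma-to-Z}), applies the $\Z$-process version (\cref{thm:realtive_efficient_finitary_coding_Z}, which in turn rests on the relative Smorodinsky machinery and the pro-dependence argument), and then transfers back. This $\Z$-order reduction is what lets the paper bring the existing, fully finitary Keane--Smorodinsky technology to bear without reproving an Ornstein--Weiss-style tiling theorem in a finitary setting.

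That said, as written your proposal has genuine gaps, not just missing detail. First, the ``extended Keane--Smorodinsky'' result you invoke to split $Y$ finitarily into $Y_0\times Y_1$ is only established in the paper for $\Z$-processes (\cref{thm:top-finitary}) and, for general $(\V,\Gamma)$-processes, only for countable-valued processes taking more than two values (\cref{thm:iid-isomorphic}); a Polish-valued finitary Ornstein theorem for $\Gamma$-processes is not available, and even in the countable case the ``more than two values'' restriction is an open obstruction the paper explicitly cannot remove. Second, and more seriously, your ``main obstacle'' paragraph names the theorem itself as the missing step. A finitary, translation-equivariant F\o{}lner quasi-tiling generated by a low-entropy source, together with a tile-wise code that is decodable from $Y$ alone (not from the original high-entropy source $Z$), is precisely the kind of finitary Ornstein--Weiss machinery that does not currently exist in the literature and is not constructed in the paper either. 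In particular you do not explain how the tiling, the set of ``typical'' $X$-restrictions on a tile, and the backup scheme for atypical tiles are all recoverable in a finitary way from $Y$; without that, one only obtains a measurable factor map, not a finitary one. The paper's detour through $\Z$-type orders is not a cosmetic choice but the mechanism that makes finitaryness achievable.
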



To the best of our knowledge, already the case $\Gamma = \mathbb{Z}$ of \cref{thm:amenable_groups_efficient_finitary_factors_of_iid} is new.
We deduce \cref{thm:amenable_groups_efficient_finitary_factors_of_iid} as a special case of a more general result (\cref{thm:main}), which in the case of the Ising model on $\Z^d$ implies that the asserted finitary factor can furthermore by chosen to be equivariant with respect to reflections and rotations by $\frac{\pi}{2}$.

It has been conjectured for some time that (for $\mathbb{Z}$-processes) any finitary factor of an \iid\ process is finitarily isomorphic to an \iid\ process \cite{rudolph1981characterization,smorodinsky1992finitary}. This would have immediately implied the case $\Gamma=\mathbb{Z}$ of \cref{thm:amenable_groups_efficient_finitary_factors_of_iid}. However, a recent result of Gabor \cite{gabor2019finitary} refutes this conjecture, 
so that the conclusion of  \cref{thm:amenable_groups_efficient_finitary_factors_of_iid} cannot in general be strengthened to yield that $X$ is finitarily $\Gamma$-isomorphic to an \iid\ process, even in the simplest case $\Gamma=\mathbb{Z}$.

Our main result is stated in terms of processes which are invariant with respect to  a group of permutations acting on the countable set which is the domain of the process: Let $\V$ be a countable set and let  $\Gamma < \mathit{Perm}(\V)$ be a subgroup of the permutations of $\V$. A \textbf{$(\V,\Gamma)$-process} is a random function from $\V$ to some  Polish space $\cA$, whose distribution is invariant under the action of $\Gamma$. Equivalently, a  $(\V,\Gamma)$-process is a $\Gamma$-invariant probability measure on $\cA^\V$. 
We say that $\Gamma$ is \textbf{nice} if it acts transitively on $\V$, the orbit of every $v \in \V$ under the stabilizer of any $w \in \V$ is finite, it is unimodular, and it satisfies a certain ``aperiodicity condition'', whose precise definition we postpone to \cref{sec:nice-groups}. In general, we do not assume that the group $\Gamma$ is countable.

Two important settings captured by this definition are the following:
\begin{itemize}
	\item $\V$ is itself a countable group, and $\Gamma$ is the group of permutations corresponding to left multiplication. This is equivalent to the action of $\Gamma$ on $\V$ being transitive and free. In this case, $(\V,\Gamma)$-processes are naturally identified with $\Gamma$-processes in the classical sense.
	\item $\V$ is the vertex set of a locally finite, connected, vertex-transitive, unimodular graph $G$ which has uniquely centered balls (see \cref{sec:nice-groups}), and $\Gamma$ is the group of automorphisms of the graph $G$ (i.e., permutations of $\V$ that preserve the edges of $G$).
\end{itemize}
The framework of nice permutation groups thus extends the classical setting of $\Gamma$-processes, and the following is a generalization of \cref{thm:amenable_groups_efficient_finitary_factors_of_iid} to this framework.

\begin{thm}\label{thm:main}
	Let $\V$ be a countable set and let $\Gamma < \mathit{Perm}(\V)$ be a nice amenable group. Let $X$ be a countable-valued $(\V,\Gamma)$-process which is a finitary $\Gamma$-factor of an \iid\ process. 
	Then, for any $\epsilon>0$, there exists a process $X'$ whose single-site entropy $H(X'_{v})$ is at most $\epsilon$ such that $(X,X')$ is finitarily $\Gamma$-isomorphic to a countable-valued \iid\ process. Moreover, when $X$ has finite entropy, the latter \iid\ process can be taken to be finite-valued, and in particular, $X$ is a finitary $\Gamma$-factor of a finite-valued \iid\ process whose entropy is at most $h(X)+\epsilon$.
\end{thm}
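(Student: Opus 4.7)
The plan is to enlarge $X$ by a small independent iid source and then apply a relative finitary isomorphism theorem to the enlargement. Since by hypothesis $X$ is a finitary $\Gamma$-factor of some iid process $Y$ via a finitary factor map $\phi$, I would let $Z$ be an iid $(\V,\Gamma)$-process, independent of $Y$, with $H(Z_v) \le \epsilon$, and take $X' = Z$ (or a suitable finitary factor thereof). Then $(X,X')$ is a finitary $\Gamma$-factor of the iid process $(Y,Z)$; its single-site entropy is controlled; and, crucially, $X'$ provides a genuinely independent iid coordinate sitting inside the enlarged process, which will later be used as a source of ``free bits'' to finitarily invert $\phi$.

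The central step is to exhibit a finitary $\Gamma$-isomorphism between $(X,X')$ and an iid process. This is where the relative finitary isomorphism machinery advertised in the abstract should enter: I would invoke a relative Smorodinsky-type theorem, adapted from the $\Z$, finitely dependent case to the present nice amenable group, finitary-factor-of-iid setting. The independent iid component $X'$ is essential here, because it supplies the genuine free randomness needed to run a finitary marker-and-filler encoding. Concretely, one builds a Rohlin-type $\Gamma$-tower (using amenability together with the aperiodicity condition on $\Gamma$), uses $X'$ both to place canonical markers and to fill ambiguities of $\phi$ in a finitarily decodable fashion, and verifies the entropy bookkeeping using unimodularity. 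The target iid process then has entropy exactly $h(X,X') \le h(X) + \epsilon$.

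For the moreover clause, when $h(X)<\infty$ the target iid process is of finite entropy but a priori only countable-valued; here I would apply the announced extension of the Keane--Smorodinsky theorem to countable-valued iid processes to replace it by a finite-valued iid of the same, or arbitrarily close, entropy, so that composing with the projection $(X,X')\to X$ yields $X$ as a finitary $\Gamma$-factor of a finite-valued iid process of entropy at most $h(X)+\epsilon$, after harmlessly rescaling $\epsilon$. The main obstacle is the central step: transporting a Keane--Smorodinsky / Smorodinsky marker-and-filler construction from $\Z$ (where one has a natural ordering and canonical translations) to a nice amenable permutation group acting on a countable set, and simultaneously upgrading from finitely dependent processes to the strictly weaker class of finitary factors of iid. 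Both extensions will rely on the four structural properties packaged into the word ``nice'' (transitivity, finite stabilizer orbits, unimodularity, and aperiodicity), together with a sufficiently robust Rohlin-type lemma in this setting; the small iid auxiliary $X'$ is precisely what makes the inverse direction of the isomorphism finitarily implementable.
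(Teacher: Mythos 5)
Your proposal has a genuine gap at its foundation: you take $X' = Z$ to be an iid process \emph{independent} of $Y$ (and hence of $X$). The theorem does not assert, and the paper's proof does not produce, such an independent $X'$. In the paper's argument (via \cref{thm:realtive_efficient_finitary_coding_Z} and \cref{prop:finitary_limit_of_finitarily_dependent}), the auxiliary process is essentially the pair $(R,W)$, where $W$ is a small independent iid source but $R$ is a \emph{coding length process} recording, at each site, how far one must look in the source to determine $X$ there. This $R$ is genuinely correlated with $X$, and the correlation is what makes the whole argument run: one builds partial processes $X^{(n)} \preceq (X,R)$ by keeping only sites whose coding length is small relative to the current marker block, and uses $R$ to detect finitarily when this happens, which is precisely what makes $(X,R)$ finitarily pro-dependent relative to $W$. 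The paper explicitly warns, immediately after \cref{prop:finitary_limit_of_finitarily_dependent}, that one cannot hope to show $X$ alone is finitarily pro-dependent relative to an independent $W$, because Gabor's example gives a finitary factor of iid that is not finitarily isomorphic to iid. With your choice of an independent $X'$, you would be asserting something at least as strong, and there is no reason offered (nor known) that it holds; the low-entropy independent bits cannot encode the coding-length information, which has unbounded support.

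The second divergence is methodological. Your central step proposes a direct ``Rohlin-type $\Gamma$-tower'' marker construction on the nice amenable group. The paper does not do this. Instead, it proves the full statement first for $\Z$-processes (\cref{thm:realtive_efficient_finitary_coding_Z}, via the relative Smorodinsky theorem \cref{thm:relative_smorodinsky_k_dependent} and the pro-dependent framework), and then \emph{reduces} the $(\V,\Gamma)$-case to $\Z$ by constructing a finitary $\Z$-type total order on $\V$ as a factor of a small independent iid process (\cref{prop:cycle-free-permutation}, built from the ``finitary cell process'' of \cref{prop:finite_cell_process}), then translating via \cref{lem:from-Gamma-to-Z} and finally pulling back. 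Unimodularity and aperiodicity are used exactly here, to build the cell process and cycle-free permutation, not to set up a $\Gamma$-Rohlin tower. Your high-level plan of ``adapting Smorodinsky's marker-and-filler argument directly to $\Gamma$'' would have to reinvent this reduction or something comparable; as written it papers over the hardest part. The treatment of the ``moreover'' clause, by invoking the countable-valued finitary Ornstein result, is in the right spirit (the paper uses \cref{thm:iid-isomorphic} there), but the gaps above are load-bearing and would need to be fixed first.
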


The entropy of a $(\V,\Gamma)$-process $X$ over a nice amenable group $\Gamma$ is a quantity denoted by $h(X)$. As we show in \cref{sec:nice-groups}, $h(X)$ is an isomorphism invariant for $(\V,\Gamma)$-processes over nice amenable groups, which coincides with the Kolmogorov--Sinai entropy in the classical setting of $\Gamma$-processes over countable amenable groups. As in the classical case, $h(X)$ is monotone under factors, is bounded above by the single-site entropy $H(X_v)$, and equals the latter for \iid\ processes.

We point out that besides the more general framework, \cref{thm:main} offers two further improvements over \cref{thm:amenable_groups_efficient_finitary_factors_of_iid}: It allows processes with countably infinite alphabets, and it gives a certain finitary isomorphism result.

\medbreak

Our initial motivation for introducing  $(\V,\Gamma)$-processes was to obtain results about finitary factor maps which are equivariant with respect to the group of automorphisms of certain graphs (e.g., when $\V=\Z$ and $\Gamma$ is the permutation group generated by translations and reflections).
However, once the above framework is introduced, it is natural to ask for which groups of permutations $\Gamma < \mathit{Perm}(\V)$ is entropy
a complete invariant for isomorphism of \iid\ $(\V,\Gamma)$-processes, meaning that two such processes are isomorphic if and only if they have equal entropy.
This problem consists of two distinct parts, necessity and sufficiency. Our main results are closely related to the ``sufficiency'' direction, which we proceed to discuss. We defer a brief discussion of the ``necessity'' part to \cref{sec:conclusion}.

Following Stepin \cite{stepin1976bernoulli}, we say that a group $\Gamma$ of permutations of a countable set $\V$ is \textbf{Ornstein} if any two equal-entropy \iid\ $(\V,\Gamma)$-processes are isomorphic, and \textbf{finitarily Ornstein} if any two countable-valued\footnote{The ``countable-valued'' assumption is needed since a finitary factor of a countable-valued process is always countable valued as well, so that a countable-valued process cannot be finitarily isomorphic to an uncountable-valued process.} equal-entropy \iid\ $(\V,\Gamma)$ processes are finitarily isomorphic.
There is the question of equal entropy being a sufficient condition for (finitary) isomorphism: 

\begin{quest}\label{quest:entropy_complete}
	Which permutation groups $\Gamma$ are (finitarily) Ornstein?
\end{quest}

We note that each of the two properties is preserved when passing to a subgroup.
Ornstein's isomorphism theorem and Keane--Smorodinsky~\cite{keane1979bernoulli} answer the primary case where $\Gamma$ is a cyclic group generated by a transitive permutation of $\V$.
In the case of a permutation group $\Gamma$ whose action is transitive and free, Seward~\cite{seward2018bernoulli} proved that $\Gamma$ is Ornstein, and that any two \emph{finite-valued} equal-entropy \iid\ $(\V,\Gamma)$-processes are finitarily isomorphic.
Seward's result was preceded by a partial result of Bowen \cite{bowen2012almostornstien} which showed that any such group is \textbf{almost Ornstein}, meaning that any two equal-entropy \iid\ $(\V,\Gamma)$-processes are isomorphic, as long as neither of the two processes has a single-site marginal that is supported on a two element set.  As a byproduct of the proof of our main result, we obtain a strengthening of Bowen's result, showing that nice permutation groups are ``almost finitarily Ornstein'':

\begin{thm}\label{thm:iid-isomorphic}
	Let $\V$ be a countable set and let $\Gamma < \mathit{Perm}(\V)$ be a nice permutation group. Then any two countable-valued equal-entropy \iid\ $(\V,\Gamma)$-processes taking more than two values are finitarily $\Gamma$-isomorphic.
\end{thm}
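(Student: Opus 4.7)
The plan is to reduce Theorem~\ref{thm:iid-isomorphic} to two ingredients: (a) the finite-valued analogue, asserting that any two finite-valued equal-entropy \iid\ $(\V,\Gamma)$-processes taking more than two values are finitarily $\Gamma$-isomorphic; and (b) a finitary truncation statement showing that every countable-valued \iid\ $(\V,\Gamma)$-process of finite entropy is finitarily $\Gamma$-isomorphic to some finite-valued \iid\ process of the same entropy. Given (a) and (b), the finite-entropy case of the theorem follows by composition: apply (b) to $X$ and $Y$ to obtain finite-valued \iid\ processes $Z,W$ of equal entropy, taking more than two values (arranged by refinement if necessary, which is possible precisely because $X$ and $Y$ have more than two values), then apply (a) to conclude that $Z \cong W$ finitarily, and finally compose with the two isomorphisms coming from (b). The infinite-entropy case can be handled separately by decomposing each of $X,Y$ as an independent product of countably many finite-entropy \iid\ factors of matched entropies and splicing finitary isomorphisms coordinatewise.

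For step (a), the approach is to adapt the Keane--Smorodinsky/Seward marker--filler construction to the nice permutation group setting. The aperiodicity condition built into the definition of ``nice'' should yield a sparse $\Gamma$-equivariant set of markers arising as a finitary function of an auxiliary \iid\ field; these markers partition $\V$ into finite $\Gamma$-equivariant cells. Inside each cell, one performs a Keane--Smorodinsky-type coding between the source and target alphabets, using the ``more than two values'' hypothesis to find compatible symbolic factorizations whose entropies match. Unimodularity together with the finiteness of stabilizer orbits, both part of niceness, are what allow this coding to remain simultaneously $\Gamma$-equivariant and genuinely finitary in the presence of a non-free and possibly uncountable action.

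Step (b) can be tackled by combining a Smorodinsky-style truncation of the countable alphabet with the constructions underlying Theorem~\ref{thm:main}: truncate to a large finite alphabet and finitarily encode the rare out-of-range symbols using a small amount of spare entropy harvested from nearby sites, which becomes available once equivariant markers are in place. I expect the main obstacle to be step (a), since the classical Keane--Smorodinsky argument heavily exploits a linear ordering and a free action on $\mathbb{Z}$, whereas here one must build markers and execute the filler coding equivariantly under a possibly uncountable, non-free permutation group while carefully bookkeeping stabilizer orbits; the niceness hypotheses on $\Gamma$ are precisely what should make each of these steps go through.
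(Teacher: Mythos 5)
Your proposal takes a genuinely different route from the paper, and one that has gaps that would be difficult to close.

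The paper does \emph{not} prove a finite-valued analogue separately and then truncate. Instead, the key move is a reduction to conditional $\mathbb{Z}$-processes, as follows. Using the ``more than two values'' hypothesis, one first adjusts the alphabets (splitting two atoms into three, keeping entropy fixed) so that $X$ and $Y$ share an atom $a$ of the same mass $p$. The occurrence indicator of $a$ is then a $\{0,1\}$-valued \iid\ marker process $Z$ which is simultaneously a finitary factor of $X$ and of $Y$. Applying \cref{lem:finite_cell_processI}, \cref{lem:inv_total_order}, and the \emph{first} part of \cref{lem:cycle-free-permutation} (which only requires semi-niceness plus unimodularity — crucially, not the single-orbit version, thereby avoiding circularity with \cref{thm:iid-isomorphic} itself), the marker $Z$ admits a finitary equivariant cycle-free permutation of $\V$. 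Conditioning on $Z$ and restricting to the non-marker sites, the processes $X$ and $Y$ become families of conditionally \iid\ $\mathbb{Z}$-processes along the orbits, to which Keane--Smorodinsky (or \cref{thm:relative_smorodinsky_k_dependent}) can be applied. The whole $(\V,\Gamma)$-level coding work is delegated to the $\mathbb{Z}$-processes.

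Your proposal misses this reduction and instead attempts to redo Keane--Smorodinsky directly over $(\V,\Gamma)$. Two concrete problems arise. First, your step (b) — a finitary isomorphism between a countable-valued \iid\ process and some finite-valued \iid\ process of the same entropy — is not a simpler subcase: in the $\mathbb{Z}$ setting it is exactly a special case of \cref{thm:smorodinsky_k_dependent}, and in the $(\V,\Gamma)$ setting it is exactly a special case of \cref{thm:iid-isomorphic} (take the target to be any finite-valued \iid\ process with the same entropy and at least three values). ``Harvesting spare entropy from nearby sites'' is precisely the hard finitary coding step and is not sketched in enough detail to check. Worse, you suggest invoking ``constructions underlying \cref{thm:main}'', but the proof of \cref{thm:main} cites \cref{thm:iid-isomorphic}, so this is circular as stated. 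Second, your step (a) proposes a direct $(\V,\Gamma)$ adaptation of the Keane--Smorodinsky/Seward marker–filler machinery; the paper explicitly declines to pursue this (see the discussion after \cref{thm:iid-isomorphic} of why Seward's methods do not obviously transfer even for $\Z^d$ with its full automorphism group), and none of the structural lemmas in the paper supply such an adaptation. The common-atom-plus-$\mathbb{Z}$-type-order reduction is precisely what lets the paper bypass this obstacle; without it, you would in effect need to re-prove the full finitary isomorphism theorem from scratch in the $(\V,\Gamma)$ setting, which your sketch does not do.
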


Already in the case of ordinary $\Gamma$-processes, 
 \cref{thm:iid-isomorphic} is not fully contained in Seward's result because our result also yields finitary isomorphism of equal-entropy \iid\ processes taking values in a countably infinite space.

The  assumption``taking more than two values'' in \cref{thm:iid-isomorphic} seems artificial, but we currently do not know how to remove it.  It does not appear that Seward's methods from \cite{seward2018bernoulli}, which have been used to remove a similar restriction in  Bowen's result \cite{bowen2012almostornstien} in the free-transitive case, can be directly applied, even for specific cases such as the case where $\V=\mathbb{Z}^d$ and $\Gamma$ is the group of automorphisms of the standard Cayley graph of $\mathbb{Z}^d$ (this seems to be non-trivial even for $d=1$).

We mention that some  groups of permutations are not Ornstein. For instance, if $\Gamma$ consists of all the permutations of $\V$ (or the countable subgroup consisting of all permutations that fix all but finitely many elements of $\V$), then de Finetti's theorem implies that any ergodic $(\V,\Gamma)$-process is actually \iid\ and that a pair of countable-valued \iid\ $(\V,\Gamma)$-processes are isomorphic if and only if the corresponding distributions of the marginals are equal up to renaming the symbols.  More generally, if $g \in \Gamma \setminus \{1_\Gamma\}$  moves only finitely many elements of $\V$, then the probability of the event $\{ g(X)=X\}$ is a non-trivial invariant of isomorphism for \iid\ processes. We refer to \cref{rem:non_aperiodic_example} in \cref{sec:nice-groups} for a different  example of a  permutation group $\Gamma$ which admits non-isomorphic \iid\ processes of equal entropy.


\subsection{Organization of the paper}
In \cref{sec:defs_notation}, we introduce some new definitions and recall some standard terminology.  In \cref{sec:relative_fd_Z}, we generalize a theorem of Smorodinsky~\cite{smorodinsky1992finitary}, stating that any two finite-valued finitely dependent $\Z$-processes of equal entropy are finitarily isomorphic.  As in other results in ergodic theory, we do so by formulating a version that is ``relative to a factor''. Additionally, we also allow for countable-valued processes. This ``relative'' generalization  provides us with additional flexibility needed for our applications and, arguably,  streamlines the proof for  Smorodinsky's original result. Also in \cref{sec:relative_fd_Z}  we prove a  finitary isomorphism theorem for \iid\ processes taking values in Polish spaces. We further introduce the notion of ``(relatively) pro-dependent processes'', and show that it gives a characterization of processes that are (relatively) finitarily isomorphic to \iid. In \cref{sec:sequential}, we apply the results obtained in  \cref{sec:relative_fd_Z} to deduce a version of \cref{thm:main} for $\Z$-processes. \cref{sec:relative_fd_Z,sec:sequential} deal exclusively with $\Z$-processes.
	
	In \cref{sec:nice-groups}, we introduce and discuss the notions of ``semi-nice'' and ``nice'' permutation groups. We also introduce the definition of entropy for $(\V,\Gamma)$-processes, where $\Gamma$ is a nice amenable group, and prove that some basic facts about entropy for classical $\Gamma$-processes hold in this setting as well. In \cref{sec:z_type_order}, we discuss the existence of certain random total orders on $\V$ obtained as finitary factors of \iid\ $(\V,\Gamma)$-processes. This is used to reduce \cref{thm:iid-isomorphic} and \cref{thm:main} to the case of $\Z$-processes. The reduction of the latter is carried out in \cref{sec:proof_of_thm_main}, which completes the proof of \cref{thm:main}. We conclude with some further remarks and related open questions in \cref{sec:conclusion}.

\subsection{Acknowledgement}  We thank Yair Glasner for helpful discussions regarding amenable actions of locally compact groups. Parts of this work have been carried out in the University of British Columbia and the Pacific Institute for Mathematical Sciences. The authors are thankful for the warm hospitality. Research of TM was supported in part by the ISF grant 1052/18. Research of YS was supported in part by NSERC of Canada.

\section{Definitions and notation}\label{sec:defs_notation}

Let $\V$ be  a countable set, let $\Gamma$ be a group of permutations of $\V$, and let $\cA$ be a Polish space. Given  an $\cA$-valued function $X \in \cA^\V$ on $\V$, we write $X_v$ for the value of $X$ at $v \in \V$ and $X_F$ for the restriction of $X$ to $F \subset \V$. The group 
$\Gamma$ acts on $\cA^\V$. For concreteness, we use the left action given by $g(x)_{v} := x_{g^{-1}(v)}$, for $x \in \cA^{\V}$ and $g \in \Gamma$.
An $\cA$-valued $(\V,\Gamma)$-process $X= (X_v)_{v \in \V}$
is a random function from $\V$ to  $\cA$, whose distribution is invariant with respect to the action of $\Gamma$.  When $\V$ and $\Gamma$ are clear from the context, we say ``$X$ is process'', suppressing $\V$ and $\Gamma$ from the notation.
A \textbf{joining} of two processes $X$ and $Y$ is a $\Gamma$-invariant coupling of $X$ and $Y$. When we say that a process $X$ takes more than $k$ values, we mean that there is no set $\cA'$ with $|\cA'|=k$ such that $X_v \in \cA'$ almost surely.

Given two random variables $U$ and $V$ (on a common probability space), we denote by $\cL(U \mid V)$ the conditional distribution of $U$ given $V$. Observe that if $U$ takes values in $\cA$, then $\cL(U \mid V)$ is a random variable taking values in the space of probability measures on $\cA$.

\medskip\noindent
\textbf{Partial processes.}
Let  $\star \not \in \cA $ be a ``new symbol'', to be  interpreted as ``undefined''. For $I \subseteq \V$ and $x,x' \in (\cA \cup \{\star\})^I$, we say that $x'$ \textbf{extends} $x$ if $x'_v=x_v$ for every $v \in I$ such that $x_v \ne \star$. A \textbf{partial process} of an $\cA$-valued process $X$ is an $(\cA \cup \{\star\})$-valued process $\tilde X$ that comes with a joining $(X,\tilde X)$ so that $X$ almost surely extends $\tilde X$. We refer to $\Pr( \tilde X = \star)$ as the \textbf{uncertainty} of the partial process $\tilde X$. We use the notation $\tilde X \preceq X$ to indicate that $\tilde X$ is a partial process of $X$.

\medskip\noindent
\textbf{Factors.}
Let $\cX$ and $\cY$ be two Polish spaces on which $\Gamma$ acts (measurably).
Let $X \in \cX$ and $Y \in \cY$ be $\Gamma$-invariant random variables defined on a common probability space.
Let $\varphi \colon \cY \to \cX$ be measurable.
We say that $\varphi$ is a \textbf{factor map} from $Y$ to $X$ if it is $\Gamma$-equivariant, i.e., for every $\gamma \in \Gamma$, it almost surely holds that $\varphi(\gamma Y) = \gamma \varphi(Y)$, and $X = \varphi(Y)$ almost surely.
We say that \textbf{$X$ is a factor of $Y$} if there exists a factor map from $Y$ to $X$.

We will mostly be interested in the situation where $\cX=\cA^\V$ and $\cY=\cB^V$ for some Polish spaces $\cA$ and $\cB$, in which case $X$ is an $\cA$-valued $(\V,\Gamma)$-process and $Y$ is a $\cB$-valued $(\V,\Gamma)$-process.

\medskip\noindent
\textbf{Finitary factors.}
There are two main notions of finitary factors in the literature, which coincide for the class of discrete-valued processes. One definition is based on stopping times, while the other involves topology. To distinguish between the two notions, we call the first stop-finitary and the second topo-finitary. 

\smallskip
\noindent
\emph{Stopping-time definition.}
Let $\cA$ and $\cB$ be two Polish spaces.
Let $X$ and $Y$ be $\cA$- and $\cB$-valued $(\V,\Gamma)$-processes, defined on a common probability space.
Fix an enumeration $\{v_0,v_1,v_2,\dots\}$ of $\V$, and let $\cF=(\cF_n)_{n \ge 0}$ be the natural filtration associated to $(Y_{v_0},Y_{v_1},\dots)$.  Given a stopping time $\tau$ (with respect to the filtration $\cF$), let $\cF_\tau$ be the $\sigma$-algebra consisting of all events $F$ such that $F \cap \{ \tau \le n \} \in \cF_n$ for all $n$.
Let $\varphi \colon \cB^\V \to \cA^\V$ be a factor map from $Y$ to $X$.
We say that $\varphi$ is \textbf{stop-finitary} if $\varphi(Y)_{v_0}$ is $\cF_\tau$-measurable for some almost surely finite stopping time $\tau$. It is easy to see that the definition does not depend on the enumeration of $\V$. Note also that the finitaryness of $\varphi$ is not affected by the modification of $\varphi$ on a null set (with respect to $Y$).

\smallskip
\noindent
\emph{Topological definition.}
Let $\cX$ and $\cY$ be two Polish spaces.
Let $X \in \cX$ and $Y \in \cY$ be $\Gamma$-invariant random variables defined on a common probability space.
Let $\varphi \colon \cY \to \cX$ be a factor map from $Y$ to $X$.
We say that $\varphi$ is \textbf{topo-finitary} if it is continuous when restricted to a set of full measure, i.e., if there exists a
measurable set $\Omega \subset \cY$ such that $\varphi|_\Omega$ is continuous and $Y \in \Omega$ almost surely. 

\smallskip
\noindent
\emph{Examples.}
\begin{itemize}

\item Let $\cA := [0,1]$ and $\cB:=\{0,1\}$.
Let $Y$ be the \iid\ process consisting of fair coins flips and define $X$ by letting $X_n$ be the real number whose binary expansion is $(Y_n,Y_{n+1},\dots)$. Then $X$ is a topo-finitary factor of $Y$ (in fact, the associated map is continuous everywhere), but is not a stop-finitary factor.

\item A similar example to above in which $\cA$ is countable (but not discrete) is as follows. Let $\cA := [0,1] \cap \Q$ and $\cB:=\{0,1\}$. Let $Y$ be as before and let $X_n$ be the number whose binary expansion is $(\1_{\{Y_{n+j}=1\text{ for }i \le j \le 2i\}})_{i \ge 1}$. A simple application of Borel--Cantelli shows that $X_n \in \cA$ almost surely. Then $X$ is a topo-finitary factor of $Y$, but not a stop-finitary factor.

\item Let $\cA := \{0,1\}$ and $\cB:=[0,1]$.
Let $Y$ be the \iid\ process consisting of uniform variables on $[0,1]$.
Let $B \subset \cB$ be a Borel set and define $X$ by $X_n := \1_{\{Y_n \in B\}}$. Then $X$ is a stop-finitary factor of $Y$ (with stopping time $\tau=0$), and while many non-trivial choices of $B$ (e.g., $[0,\frac12]$) make $X$ a topo-finitary factor of $Y$, there are also many choices (e.g., any fat Cantor set) for which $X$ is not a topo-finitary factor of $Y$.
\end{itemize}
The examples demonstrate that, in general, the two notions of finitary factors are not comparable.
The following lemma shows that the two notions are in fact equivalent when $\cX=\cA^\V$ and $\cY=\cB^V$ and the state spaces $\cA$ and $\cB$ are discrete (note that a discrete Polish space is at most countable).

\begin{lemma}\label{lem:finitary-equiv}
Let $\cA$ and $\cB$ be Polish spaces.
Let $X$ and $Y$ be $\cA$- and $\cB$-valued $(\V,\Gamma)$-processes, defined on a common probability space. Let $\varphi \colon \cB^\V \to \cA^\V$ be a factor map from $Y$ to $X$.
\begin{itemize}
 \item If $\cA$ is discrete and $\varphi$ is topo-finitary, then $\varphi$ is stop-finitary.
 \item If $\cB$ is discrete and $\varphi$ is stop-finitary, then $\varphi$ is topo-finitary.
\end{itemize}
\end{lemma}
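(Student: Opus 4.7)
The plan for both directions is to exploit that when the codomain (resp.\ domain) of $\varphi$ is discrete, singletons have clopen neighborhoods there, which lets me translate topological continuity into finite-window determination and vice versa. The base topology on $\cB^\V$ (resp.\ $\cA^\V$) is the product topology, so basic open sets depend on finitely many coordinates; second countability of $\cB$ and $\cA$ gives a countable basis, which is what makes the translation measurable.

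For the first implication, I would assume $\cA$ is discrete and $\varphi|_\Omega$ is continuous on some measurable $\Omega\subseteq\cB^\V$ with $\Pr(Y\in\Omega)=1$. Since $\{a\}$ is open in $\cA$ for each $a$, the set $\{y\in\Omega:\varphi(y)_{v_0}=a\}$ is relatively open in $\Omega$, and hence for each $y\in\Omega$ there is a basic open neighborhood depending only on coordinates $v_0,\ldots,v_n$ (for some $n=n(y)$) whose intersection with $\Omega$ is mapped by $\varphi(\cdot)_{v_0}$ to a single point of $\cA$. I would define
\[
\tau(y) := \min\bigl\{ n\ge 0 : \text{some basic open } U\ni y \text{ with base} \subseteq\{v_0,\ldots,v_n\} \text{ has } \varphi(\cdot)_{v_0}|_{U\cap\Omega} \text{ constant}\bigr\},
\]
and set $\tau(y):=0$ for $y\notin\Omega$. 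Using a fixed countable basis for $\cB$, the event $\{\tau\le n\}$ becomes a countable union of cylindrical sets in $\cF_n$, so $\tau$ is a stopping time; it is finite on $\Omega$ and therefore $\tau(Y)$ is a.s.\ finite. The constant value witnessing $\tau(y)=n$ is a measurable function of $(y_{v_0},\ldots,y_{v_n})$, giving $\cF_\tau$-measurability of $\varphi(Y)_{v_0}$.

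For the reverse implication, I would assume $\cB$ is discrete and fix an a.s.\ finite stopping time $\tau$ so that $\varphi(Y)_{v_0}=\psi(Y_{v_0},\ldots,Y_{v_\tau})$ a.s.\ for some measurable $\psi$. Discreteness of $\cB$ makes every cylinder $[b_0,\ldots,b_n]_{v_0,\ldots,v_n}$ clopen in $\cB^\V$, and since $\{\tau\le n\}\in\cF_n$ is a union of such cylinders, the event $\{\tau=n\}$ is a disjoint union of cylinders of length $n+1$. On each positive-$Y$-measure cylinder $C$ inside $\{\tau=n\}$, the value of $\varphi(Y)_{v_0}$ is almost surely some constant $a(C)\in\cA$. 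I plan to modify $\varphi$ on a null set: set $\tilde\varphi(y)_{v_0}:=a(C)$ whenever $y\in C$ for such a positive-measure $C$, extend to all other coordinates via $\Gamma$-equivariance, and let $\Omega$ be a $\Gamma$-invariant full-measure set on which these assignments are simultaneously consistent at every coordinate. Then $\tilde\varphi(\cdot)_{v_0}^{-1}(\{a\})\cap\Omega$ is a union of (clopen) cylinders intersected with $\Omega$, hence relatively open in $\Omega$; this makes $\tilde\varphi(\cdot)_{v_0}$ continuous on $\Omega$, and continuity of the full product-valued map follows coordinatewise.

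The main technical nuisance will arise in the second direction, where I must ensure that the null-set modifications can be patched into a truly $\Gamma$-equivariant map defined on a $\Gamma$-invariant full-measure set. I plan to handle this by applying the above construction coordinate by coordinate and taking a countable intersection over $v\in\V$ of the corresponding ``good'' events; this intersection is $\Gamma$-invariant modulo null sets and can be replaced by a genuine $\Gamma$-invariant full-measure $\Omega$ by a standard argument, after which the ambiguity in the equivariant extension vanishes.
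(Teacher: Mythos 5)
Your first implication matches the paper's proof essentially step for step: both define a stopping time $\tau$ as the first level $n$ at which the partition of $\Omega$ into the relatively open sets $\Omega_a = \{y\in\Omega:\varphi(y)_{v_0}=a\}$ is resolved by coordinates $v_0,\dots,v_n$. (The paper uses exact cylinders $B_n(y)$ and the inclusion $B_n(Y)\cap\Omega\subseteq\Omega_a$; you use basic open boxes with base in $\{v_0,\dots,v_n\}$. The two stopping times can differ, but both are finite and $\cF_n$-adapted, so the argument is the same.) Your second implication is also correct and rests on the same observation -- discreteness of $\cB$ makes $\{\tau=n\}$ a union of clopen cylinders on each of which $\varphi(\cdot)_{v_0}$ is a.s.\ constant -- but you take an unnecessarily heavy route in the clean-up. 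You modify $\varphi$, insist that the modification $\tilde\varphi$ be $\Gamma$-equivariant, and look for a $\Gamma$-invariant full-measure $\Omega$; your ``main technical nuisance'' is exactly this re-equivariantization. But the definition of topo-finitary does not ask for $\Omega$ to be $\Gamma$-invariant, and you never need a $\Gamma$-equivariant replacement of $\varphi$. The paper instead writes down a single continuous function $F$ on $\Omega := \bigcup_{s\in\cS}[s]$ (where $\cS$ is the prefix-free support of $(Y_{v_0},\dots,Y_{v_\tau})$) that agrees with $\varphi(\cdot)_{v_0}$ on $\Omega$; this already gives continuity of $\varphi(\cdot)_{v_0}$ restricted to $\Omega$, and the other coordinates $\varphi(\cdot)_v$ are handled directly by the equivariance $\varphi(y)_v=\varphi(\gamma_v^{-1}y)_{v_0}$ (a.s.), intersecting over the \emph{countable} index set $\V$, not over $\Gamma$. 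This avoids your patching step entirely, which matters because $\Gamma$ is allowed to be uncountable in this framework, so ``replace by a genuine $\Gamma$-invariant full-measure set by a standard argument'' is not as routine as it sounds; fortunately, it is also not needed.
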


\begin{proof}
Suppose first that $\varphi$ is topo-finitary with $\cA$ discrete. Let us show that $\varphi$ is also stop-finitary.
Let $\Omega \subset \cB^\V$ be such that $\varphi|_\Omega$ is continuous and $Y \in \Omega$ almost surely.
By continuity, we can partition $\Omega$ into countably many relatively open sets $\{\Omega_a\}_{a \in \cA}$ defined by $\Omega_a := \{ y \in \Omega : \varphi(y)_{v_0}=a\}$.
For $y \in \cB^\V$ and $n \ge 0$, let $B_n(y)$ denote the set of all $y' \in \cB^\V$ which agree with $y$ on $\{v_0,\dots,v_n\}$.
Define $\tau := \min\{n \ge 0 : B_n(Y) \subset \Omega_a \text{ for some }a \in \cA \}$.
Using that $\cB^\V$ has the product topology and that each $\Omega_a$ is open in $\Omega$, it follows that $\tau$ is an almost surely finite stopping time  and that $\varphi(Y)_{v_0}$ is $\cF_\tau$-measurable.
This shows that $\varphi$ is stop-finitary.

Now suppose that $\varphi$ is stop-finitary with $\cB$ discrete. By definition of the product topology on $\cA^\V$ (and since $\varphi$ is equivariant), it suffices to show that the map $y \mapsto \varphi(y)_{v_0}$ from $\cB^\V$ to $\cA$ coincides with a continuous function $F$ on a set $\Omega \subset \cB^\V$ of full measure. Let $\tau$ be an almost surely finite stopping time for which $X_{v_0}$ is $\cF_\tau$-measurable. The random variable $Z:=(Y_{v_0},\dots,Y_{v_\tau})$ almost surely takes values in the discrete countable space of finite words over $\cB$, and the $\sigma$-algebra generated by it is $\cF_\tau$. Thus, there exists a function $f \colon \cS \to \cA$ such $X_{v_0}=f(Z)$ almost surely, where $\cS$ is the support of $Z$. For $s\in \cS$, write $[s]$ for the set of $y \in \cB^\V$ having prefix $s$. Let $\Omega := \bigcup_{s \in \cS} [s]$ and define $F(y) := f(s)$ for $y \in [s]$. This is well defined since $\cS$ is prefix free (no word in $\cS$ is the prefix of another word in~$\cS$). Then $F$ is continuous and $\Pr(Y \in \Omega)=1$.
\end{proof}

When $\cA$ and $\cB$ are discrete, the proof of \cref{lem:finitary-equiv} shows that the notion of topo-finitary factor does not change (up to a null set) if in its definition we require $\varphi$ to be continuous on $\Omega$ as opposed to the weaker property that its restriction to $\Omega$ is continuous. In other words, if $X=\varphi(Y)$ almost surely for some equivariant function $\varphi$ which is continuous when restricted to a set of full measure, then $X=\tilde\varphi(Y)$ almost surely for some equivariant function $\tilde\varphi$ which is continuous on a set of full measure.
In fact, this is true in much larger generality (see, e.g., \cite{costantini2000extensions}), though none of this will be important for the results in this paper.

When $\cA$ and $\cB$ are discrete, another interpretation of the two equivalent definitions that $X$ is a finitary factor of $Y$ is that there almost surely exists a finite set $V \subset \V$ such that $(Y_v)_{v \in V}$ determines $X_{v_0}$ (in the sense that $X_{v_0}$ is almost surely constant given the witnessed values).
We leave the verification of this as an exercise for the reader.


In the statement of \cref{thm:amenable_groups_efficient_finitary_factors_of_iid,thm:main}, the assumption is that $X$ is a finitary factor of an \iid\ processes, which could  take  values in an uncountable non-discrete Polish space.  In this case ``finitary'' should be understood as stop-finitary.  Since $X$ takes values in a countable set, taking the discrete topology,  ``stop-finitary'' is a weaker assumption than ``topo-finitary''  which leads to  stronger theorems.

\medskip
\noindent
\textbf{Block factors.}	
 A $\Z$-process $X$ is an \textbf{$m$-block factor} of a process $Y$ if $X_0$ is measurable with respect to $Y_{[-m,m)}$.
We say that $X$ is a block factor of $Y$ if it is an $m$-block factor for some finite $m$. Clearly, if $X$ is a block factor of $Y$, then it is a stop-finitary factor of $Y$.
 If $X$ is $\cA$-valued and $Y$ is $\cB$-valued, with $\cA$ and $\cB$ finite, then $X$ is a block factor of $Y$ if and only if there exists a continuous equivariant map $\pi:\cB^\mathbb{Z} \to \cA^\mathbb{Z}$ such that $X=\pi(Y)$ almost surely. In particular, in this case, a block factor is also a topo-finitary factor. More generally, by \cref{lem:finitary-equiv}, this is the case whenever $\cB$ is discrete (but need not be the case in general; recall the examples above).

\medskip
\noindent
\textbf{Finitary (relative) isomorphism.}
Let $X$ and $Y$ be two $(\V,\Gamma)$-processes. We say that $X$ and $Y$ are \textbf{finitarily isomorphic} if there is an isomorphism between them which is finitary and whose inverse is also finitary.
Now let $X,Y,W$ be $(\V,\Gamma)$-processes, with joinings $(X,W)$ and $(Y,W)$ given implicitly in the background. We say that
\textbf{$X$ and $Y$ are finitarily isomorphic relative to $W$} if there is a finitary isomorphism $\pi$ between $(X,W)$ and $(Y,W)$ which fixes the $W$-component in the sense that $\pi(X,W)=(Y,W)$ almost surely.

\medskip
\noindent
\textbf{Entropy.}
The Shannon entropy of a random variable $X$ taking values in a countable set $\cA$  is 
\[ H(X)= -\sum_{a \in \cA} \Pr( X = a) \log \Pr(X = a).\]	
The conditional Shannon entropy of $X$ given a random variable $W$ (defined on  common probability space) is 
\[ H(X\mid W)= -\E \sum_{a \in \cA} \Pr( X = a\mid W) \log \Pr(X = a \mid W).\]	
Now suppose that $X= (X_n)_{n \in \Z}$ is an $\cA$-valued $\Z$-process with $H(X_0) <\infty$.
The Kolmogorov--Sinai entropy of $X$ is given by
\[h(X)= \lim_{n \to \infty}\frac{1}{n} H(X_{[1,n]}).\]
In particular, if $X$ is an \iid\ process, then $h(X) = H(X_0)$.
In general, when $H(X_0) = \infty$ the formula for $h(X)$ above is incorrect, and the Kolmogorov--Sinai entropy of $X$ can be finite.
However, if $X$ is an \iid\ process and $H(X_0)=\infty$, then $h(X)=\infty$.
For a process $X$ for which $H(X_0)$ is not necessarily finite (and possibly $X_0$ takes an uncountable set of values), $h(X)$ can be defined as the supremum of $h(X')$ over all processes $X'$ which are factors of $X$ and satisfy $H(X'_0) < \infty$.
A fundamental property of Kolmogorov--Sinai entropy is that it is monotone under factors: If $X$ is a factor of $Y$ then $h(X) \le h(Y)$.
The above extends in a natural way to condition entropy: when $X$ and $W$ are two $\Z$-processes with a common joining and $H(X_0 \mid W) < \infty$, the conditional Kolmogorov--Sinai entropy of $X$ given $W$ is given by
\[h(X \mid W)= \lim_{n \to \infty}\frac{1}{n} H(X_{[1,n]}\mid W).\]

The Kolmogorov--Sinai entropy of a $\Gamma$-process $X= (X_v)_{v \in \Gamma}$ with $H(X_0) <\infty$ and $\Gamma$ a discrete countable amenable group is given by
\[ h(X) = \inf_{\substack{F \subset \Gamma \\ 0<|F|<\infty}} \frac{H(X_F)}{|F|} .\]
In \cref{sec:entropy}, we extend this definition to our setting of $(\V,\Gamma)$-processes over nice amenable permutation groups.

\section{A relative finitary isomorphism theorem for finitely dependent $\Z$-processes}\label{sec:relative_fd_Z}

In this section we exclusively deal with  $\Z$-processes, namely  bi-infinite sequences of random variables  with a shift-invariant distribution. 

For an integer $k \ge 0$, we say that a $\Z$-process $X$ is \textbf{$k$-dependent} if $X_A$ and $X_B$ are independent for any $A,B \subset \Z$ such that $|a-b| > k$ for all $a\in A$ and $b\in B$. A process is \textbf{finitely dependent} if it is $k$-dependent for some $k \ge 0$.
Smorodinsky~\cite{smorodinsky1992finitary} proved that any two finite-valued finitely dependent $\Z$-processes of equal entropy are finitarily isomorphic.
We will show that this result also holds for countable-valued processes (with finite or infinite entropy):  
	\begin{thm}\label{thm:smorodinsky_k_dependent}
		Any two equal-entropy finitely dependent $\Z$-processes taking at most countably many values are finitarily isomorphic.
	\end{thm}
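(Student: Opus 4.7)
The plan is to generalize Smorodinsky's marker-filler argument by first establishing a relative version of his theorem. Specifically, I would prove that if $X$ and $Y$ are jointly finitely dependent with a common factor $W$ and satisfy $h(X\mid W)=h(Y\mid W)<\infty$, then $X$ and $Y$ are finitarily isomorphic relative to $W$. As suggested by the section title, this relative formulation is the main tool; it provides the flexibility needed both to handle countable single-site alphabets and to reduce the infinite-entropy case to the finite-entropy case. The stated theorem then follows from the relative version with $W$ trivial, after the two cases below are handled.

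For the finite-entropy case, even with countable alphabets Smorodinsky's strategy adapts with two modifications. First, markers should be drawn from a finite-valued truncation of the process so that positive-probability marker patterns exist and so that the marker process is itself finitely dependent. Second, between consecutive markers the countable-alphabet filler words should be coded using Shannon-type prefix-free binary codes of expected length approximately $-\log p(\cdot)$; finiteness of $H(X_0)$ ensures these codings have almost-surely finite stopping times, and the per-symbol rate matches the entropy. The bit-matching between $X$-fillers and $Y$-fillers then proceeds as in the classical proof, with the common factor $W$ treated as side information in both the marker selection and the coding.

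For the infinite-entropy case, I would decompose $X=\bigvee_{n\ge 1} X^{(n)}$ as an increasing chain of finite-entropy factors of $X$, and similarly $Y=\bigvee_{n\ge 1} Y^{(n)}$, and apply the relative theorem inductively: isomorph $X^{(1)}$ with $Y^{(1)}$ (non-relatively), then $X^{(2)}$ with $Y^{(2)}$ relative to $Y^{(1)}$, and so on. The output would be the almost-sure limit of these partial isomorphisms. The main obstacle I anticipate is maintaining finitariness in this iterated limit, since an infinite composition of finitary maps need not be finitary. Addressing this will likely require choosing the factor chain carefully, for instance so that the extra randomness added at level $n$ is concentrated on a sparse set of sites with inter-site spacing growing in $n$, and equipping the relative Smorodinsky theorem with quantitative control on stopping-time tails (uniform in the level $n$). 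A secondary subtlety is formulating the appropriate notion of \emph{finitely dependent relative to $W$} so that the relative theorem can be stated and proved cleanly in a form that matches the downstream applications of \cref{sec:sequential,sec:proof_of_thm_main}.
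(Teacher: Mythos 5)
Your high-level plan (prove a relative Smorodinsky theorem first, then specialize) matches the paper's architecture, and your instinct that the relative form supplies the needed flexibility is correct. However, your derivation of the non-relative statement from the relative one — ``the stated theorem then follows from the relative version with $W$ trivial'' — skips the step that does most of the work, and this skip is a genuine gap. The paper's relative theorem assumes $W$ is aperiodic, and this is not incidental: aperiodicity of $W$ is what lets one extract arbitrarily sparse marker processes from $W$ itself (\cref{lem:low-density-markers}), which drives the entire ping-pong argument. A trivial $W$ has no markers, so the relative theorem as the paper proves it simply does not apply with $W$ trivial. The paper instead reduces \cref{thm:smorodinsky_k_dependent} to \cref{thm:relative_smorodinsky_k_dependent} by first invoking the \textbf{twins lemma} (\cref{lem:twins}, from Smorodinsky's paper): any two equal-entropy finitely dependent processes are connected by a chain in which consecutive pairs admit \emph{equidistributed occurrence marker processes}. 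A common marker process is then diluted to a \emph{good marker process} $M'$ (\cref{lem:diluted-marker-process-universal}) for which both $X$ and $\tilde X$ are finitarily dependent relative to $M'$, and \emph{this} $M'$ — non-trivial, ergodic, aperiodic — plays the role of $W$.

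Your sketch never addresses how markers placed in $X$ are to be aligned with markers placed in $Y$; ``markers drawn from a finite-valued truncation'' does not by itself give two processes sharing a marker process of the same distribution, and without that alignment the filler-matching step cannot start. This alignment problem is exactly what the twins lemma solves, and it is the one piece of Smorodinsky's original paper the authors could not dispense with. You should either import that lemma explicitly or give an alternative construction of a common aperiodic factor relative to which both processes become finitarily dependent.

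Two secondary remarks. First, the paper's notion is ``finitarily $K$-dependent relative to $W$'' for a \emph{random} gap process $K$ that is a finitary factor of $W$, together with the crucial requirement that the conditional laws $\cL(X_{[n-m,n+m]}\mid W)$ be finitary functions of $W$; your phrase ``jointly finitely dependent with a common factor $W$'' is too weak, since without the finitary-conditional-law condition the relative theorem is false (see \cref{rem:finitary-cond-law}). Second, your filler-coding via Shannon prefix-free codes is a reasonable alternative to what the paper does, but the paper's improvement step (\cref{lem:improve_partial_isomorphism}) instead uses a quantitative Keane--Smorodinsky marriage argument (\cref{lem:coupling_reduction}) together with a relative Shannon--McMillan--Breiman bound (\cref{lem:Shannon-McMillan-bidirectional}); this avoids fixing a specific code and keeps uniform control of the uncertainty in the iteration. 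Your concern about infinite compositions of finitary maps is well placed and is handled in the paper by tracking uncertainties along the ping-pong sequence (\cref{lem:limit_of_partial_isomorphisms}) rather than by iterating the full relative theorem over a chain of sub-processes.
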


	In the statement of \cref{thm:smorodinsky_k_dependent}, the finite or countable sets in which the process takes values are assumed to be discrete, so that the notions of topo-finitary and stop-finitary coincide.

We now introduce further definitions needed to formulate a ``relative'' version of \cref{thm:smorodinsky_k_dependent}. 
Let $(X,W)$ be a joining of two $\Z$-processes $X$ and $W$.
For an integer $k \ge 0$, we say that $X$ is \textbf{$k$-dependent relatively to $W$} if, almost surely, $X_A$ and $X_B$ are conditionally independent given $W$ for any $A,B \subset \Z$ such that $|a-b| > k$ for all $a\in A$ and $b\in B$.
More generally, given an integer-valued process $K=(K_n)_{n \in \Z}$ which is a factor of $W$, we say that $X$ is \textbf{$K$-dependent relatively to $W$} if, almost surely, $X_A$ and $X_B$ are conditionally independent given $W$ for any $A,B \subset \Z$ (measurable with respect to $W$) such that $|a-b|>\max\{K_a,K_b\}$ for all $a \in A$ and $b \in B$.
We say that $X$ is \textbf{finitarily $K$-dependent relatively to $W$} if it is $K$-dependent relatively to $W$, 
and $(\cL(X_{[n-m,n+m]} \mid W))_{n \in \Z}$ is a stop-finitary factor of $W$ for any integer $m \ge 0$.
We say that $X$ is \textbf{finitarily dependent relatively to $W$} if it is finitarily $K$-dependent for some process $K$ which is a stop-finitary factor of $W$.
Recall that $W$ is \textbf{aperiodic} if the probability that there exists an integer $p \ge 1$ such that $W_{n+p} = W_n$ for all $n \in \Z$ is zero (equivalently, the $\Z$-action associated with $W$ is essentially free).


\begin{thm}\label{thm:relative_smorodinsky_k_dependent}
	Let $W$ be an aperiodic ergodic $\Z$-process
and let $X$ and $\tilde X$ be two countable-valued $\Z$-processes, both finitarily dependent  relative to $W$, such that $h(X \mid W)=h(\tilde X \mid W)$.
	Then $X$ and $\tilde X$ are finitarily isomorphic relative to $W$.
\end{thm}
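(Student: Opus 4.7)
The plan is to adapt Smorodinsky's back-and-forth marker-and-filler scheme to the relative setting, replacing the markers that Smorodinsky extracts from the process itself by markers that are stop-finitary factors of the common reference $W$. The aperiodicity of $W$ is exactly what permits such a construction, and the conditional $K$-dependence of $X$ and $\tilde X$ given $W$ plays the role that ordinary finite dependence plays in the classical proof.

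\medskip\noindent\textbf{Step 1 (Markers from $W$).} Since $W$ is aperiodic and ergodic, for every $\delta>0$ and $L\ge 1$ I can find a cylinder event on $W$ of arbitrarily small but positive probability whose occurrences form a stop-finitary random set $M_\delta\subseteq\Z$ of density at most $\delta$ with consecutive points at least $L$ apart (a standard Rokhlin-type construction; aperiodicity gives rare patterns of any desired frequency). This partitions $\Z$ into finite blocks $B_1,B_2,\dots$ of $W$-measurable lengths, and the partition is a stop-finitary factor of $W$. Replacing the dependence kernels of $X$ and $\tilde X$ relative to $W$ by a common stop-finitary-in-$W$ upper bound $K$, and choosing $L$ much larger than typical values of $K$, I shrink each $B_j$ by a $K$-controlled boundary layer to obtain inner blocks $B'_j\subset B_j$. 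By the relative $K$-dependence, the families $(X_{B'_j})_j$ and $(\tilde X_{B'_j})_j$ are then conditionally independent given $W$, while $\bigcup_j(B_j\setminus B'_j)$ has density as small as desired.

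\medskip\noindent\textbf{Step 2 (Block-wise matching and one-step encoding).} The conditional laws $\cL(X_{B'_j}\mid W)$ and $\cL(\tilde X_{B'_j}\mid W)$ are stop-finitary factors of $W$ by hypothesis, and by the relative Shannon--McMillan--Breiman theorem each places mass close to $1$ on a finite typical set of cardinality $\approx\exp(|B'_j|\, h(X\mid W))$, with matching cardinalities thanks to the equal-conditional-entropy assumption $h(X\mid W)=h(\tilde X\mid W)$. Performing an Ornstein-style measurable almost-bijection between these two typical sets, on each block independently, produces a partial process $\tilde X^{(1)}\preceq\tilde X$ which is a stop-finitary factor of $(X,W)$, together with a symmetric partial process $X^{(1)}\preceq X$ which is a stop-finitary factor of $(\tilde X,W)$, both with uncertainty $\eta(\delta)\to 0$ as $\delta\downarrow 0$, and coupled in a $W$-fixing way.

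\medskip\noindent\textbf{Step 3 (Back-and-forth).} Now iterate Keane--Smorodinsky style. At stage $n$, use markers $M_{\delta_n}$ with $\delta_n\downarrow 0$ fast enough that each site of $\Z$ eventually falls in an interior block at all later stages. Conditional on $(\tilde X^{(n-1)},W)$ (respectively $(X^{(n-1)},W)$) the residual conditional entropy gap $h(X\mid \tilde X^{(n-1)},W)$ strictly decreases; re-encode within the still-uncertain positions and the new boundaries using the same block matching, yielding refined partial processes $X^{(n)}\preceq X$, $\tilde X^{(n)}\preceq\tilde X$ whose uncertainties tend to zero. The common refinement of these couplings (which all fix $W$) converges to a joining of $(X,\tilde X)$ under which each of $X,\tilde X$ is a stop-finitary factor of the other together with $W$, i.e.\ a finitary isomorphism relative to $W$.

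\medskip\noindent\textbf{Main obstacle.} The principal difficulty lies in Step 2 for countably supported conditional laws of possibly infinite entropy: the block matching must be stop-finitary in $(W,X)$, hence computable from only finitely many coordinates of $W$ and of the source process, even though $\cL(X_{B'_j}\mid W)$ may be an arbitrary countably supported distribution. The resolution is to truncate each block's ``support'' to a typical set on which both conditional laws are nearly uniform on the same number of atoms, match these finite sets by an Ornstein-type marriage, and declare $\star$ elsewhere. Controlling the truncation error simultaneously with the convergence of the back-and-forth (so that the accumulating $\star$-uncertainties do not prevent convergence in the countable, possibly infinite-entropy regime) is what forces the full ``finitarily'' in the dependence hypothesis, rather than mere conditional dependence with a random range.
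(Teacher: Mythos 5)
Your proposal reproduces the high-level skeleton of the paper's proof: markers obtained as stop-finitary factors of $W$, shrinking blocks to exploit relative $K$-dependence, a relative Shannon--McMillan--Breiman estimate on blocks, a marriage-type matching on each block, and a Keane--Smorodinsky back-and-forth iteration whose partial codes are glued in the limit. So the route is essentially the same, not a genuinely different one. But there is a real gap in the mechanism you offer for why the back-and-forth converges, and it is precisely the point where the paper has to work.

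In Step~3 you assert that ``the residual conditional entropy gap $h(X \mid \tilde X^{(n-1)}, W)$ strictly decreases'' and that the re-encoding therefore produces partial processes whose uncertainties tend to zero. That chain of implications does not hold: a strictly decreasing sequence need not tend to zero, and even if it did, a small entropy gap does not by itself yield a small fraction of $\star$'s in the next round. What the paper actually tracks is not an entropy gap but an \emph{$M$-complexity bound}: each partial process carried into the improvement step has $M$-complexity at most some $\kappa$ strictly below $h(X\mid W)$, and the marriage lemma guarantees that the number of non-committed block-values is at most $|U_W|-1 \le e^{\kappa|I^{M'}_0|}$, which is an exponentially small \emph{fraction} of the $\approx e^{h|I^{M'}_0|}$ typical values once the blocks are long enough. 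That is what drives the uncertainty to zero, and it forces the improvement step to be asymmetric -- one side's code $\pi$ is improved while $\tilde\pi$ is held fixed, with the complexity of the improved partial process carefully pushed up but kept below $h$ -- whereas your Step~2 constructs both $X^{(1)}$ and $\tilde X^{(1)}$ symmetrically in one shot. Without the complexity invariant the ``re-encode within the still-uncertain positions'' step has no quantitative content: you would have to verify that each round's matching is simultaneously an extension of the previous partial isomorphism (so that the codes stabilize pointwise) and has uncertainty bounded by something summable, and the entropy-gap heuristic gives neither. The related omission is that the marriage matching is chosen not arbitrarily but as an absolutely continuous perturbation of the existing coupling (the paper's \cref{lem:coupling_reduction}), which is what makes the new partial code extend the old one; ``declare $\star$ elsewhere'' does not by itself preserve that compatibility.

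A smaller but worth-noting imprecision: in Step~2 you say the two typical sets have ``matching cardinalities thanks to the equal-conditional-entropy assumption.'' They do not; SMB only gives cardinalities growing at the same exponential rate, and the whole point of the marriage lemma is to match sets whose sizes differ while controlling the number of uncommitted values. You do partly acknowledge this in the ``main obstacle'' paragraph, but the resolution you sketch (truncate to nearly-uniform typical sets and match) is still missing the count of non-committed values and the resulting quantitative bound on uncertainty.
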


	\begin{remark}
		In the statement of \cref{thm:relative_smorodinsky_k_dependent} above all instances of the notion ``finitary'' are ``stop-finitary''. In particular, our assumption is that  $(\cL(X_{A+n} \mid W))_{n \in \Z}$ is a stop-finitary factor of $W$ for any finite $A \subset \Z$, and similarly for $\tilde X$ (see the next remark on why such an assumption is needed). Because $(\cL(X_{A+n} \mid W))_{n \in \Z}$ takes values in a non-discrete topological space even when $X$ and $W$ themselves do, there is a genuine  distinction between the notions of topo-finitary and stop-finitary here. The conclusion states that the processes are stop-finitarily isomorphic relative to $W$. We expect that an analogous statement should hold if we switch to ``topo-finitary'' both in the assumptions and in the conclusion, but we do not pursue this here.
	\end{remark}

	\begin{remark}\label{rem:finitary-cond-law}
	Let us explain why one cannot drop the assumption that $(\cL(X_{A+n} \mid W))_{n \in \Z}$ is a finitary factor of $W$. Consider a process $W$ and a factor $X$ of it, which is not a finitary factor of it. Then $X$ is 0-dependent relative to $W$ (since it is deterministic given $W$), but it is not finitarily isomorphic to an \iid\ process relative to $W$ (since this would mean that the \iid\ process is a constant process and hence that $X$ is a finitary factor of $W$).
One can also construct examples where $X$ consists of conditionally independent non-constant random variables given $W$.
\end{remark}

In the statement of \cref{thm:relative_smorodinsky_k_dependent}, if $W$ is  trivial process (in which case the process $K$ must be a deterministic constant), the statement becomes \cref{thm:smorodinsky_k_dependent}. In fact, the requirement that $W$ be aperiodic can be removed from the statement of \cref{thm:relative_smorodinsky_k_dependent}: The only remaining case is that $W$ is a non-trivial periodic ergodic process meaning there exists $p>1$ such that $W_{n+p}=W_n$ almost surely. This case can be dealt with by essentially following the same steps described in \cref{sec:thm_smorodinsky}.
Our proof of  \cref{thm:smorodinsky_k_dependent}, whose essence is a reduction to \cref{thm:relative_smorodinsky_k_dependent}, does not rely on Smorodinsky's paper \cite{smorodinsky1992finitary}, except for one specific claim (\cref{lem:twins}). Somewhat surprisingly, our proof of \cref{thm:relative_smorodinsky_k_dependent} actually avoids certain extra complications confronted when $W$ is a trivial or periodic process (but presents its own unique challenges).


As a corollary of \cref{thm:relative_smorodinsky_k_dependent}, we obtain the following result.

%

We say that a process $X$ is \textbf{finitarily pro-dependent} relative to $W$ if there exists a sequence $(X^{(n)})_{n=1}^\infty$ of partial processes increasing to $X$ such that for each $n$:
\begin{itemize}
 \item $X^{(n)}$ is a finitary factor of $(W,X)$.
 \item $X^{(n)}$ is finitarily dependent relative to $(W,X^{(1)},\dots,X^{(n-1)})$.
\end{itemize}

Clearly, $X$ being finitarily dependent relative to $W$ implies that $X$ is finitarily pro-dependent relative to $W$.

\begin{thm}\label{thm:pro-dependent}
	Let $W$ be an aperiodic ergodic $\Z$-process
and let $X$ and $\tilde X$ be two countable-valued $\Z$-processes, both of which are finitarily pro-dependent relative to $W$, such that $h(X \mid W)=h(\tilde X \mid W)$.
	Then $X$ and $\tilde X$ are finitarily isomorphic relative to $W$.
\end{thm}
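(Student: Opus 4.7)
The plan is to reduce to \cref{thm:relative_smorodinsky_k_dependent} by showing that $X$ and $\tilde X$ are each finitarily isomorphic, relative to $W$, to an \iid\ process independent of $W$ whose entropy equals the corresponding conditional entropy relative to $W$. Since $h(X \mid W) = h(\tilde X \mid W)$, these two \iid\ processes have equal entropy, and the extension of the Keane--Smorodinsky theorem to Polish-valued \iid\ processes (also established in \cref{sec:relative_fd_Z}) identifies them finitarily; chaining the three isomorphisms yields the conclusion.

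For the reduction applied to $X$, let $(X^{(n)})_{n \ge 1}$ be the approximating sequence witnessing that $X$ is finitarily pro-dependent relative to $W$. Choose mutually independent, countable-valued \iid\ processes $U^{(1)}, U^{(2)}, \ldots$, each independent of $W$, with $h(U^{(n)}) = h(X^{(n)} \mid W, X^{(1)}, \ldots, X^{(n-1)})$, and set $W^{(n)} := (W, U^{(1)}, \ldots, U^{(n)})$. Each $W^{(n)}$ is aperiodic and ergodic, since $W$ is and the $U^{(k)}$ are independent \iid\ processes independent of $W$. Proceed by induction: at stage $n$, assume we already have a finitary isomorphism $\Phi_{n-1}$ between $(X^{(1)}, \ldots, X^{(n-1)})$ and $(U^{(1)}, \ldots, U^{(n-1)})$ relative to $W$. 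Transporting $X^{(n)}$ through $\Phi_{n-1}$ makes it a process jointly defined with $W^{(n-1)}$, and by the pro-dependence assumption it is finitarily dependent relative to $W^{(n-1)}$; the process $U^{(n)}$ is trivially so (being \iid\ and independent of $W^{(n-1)}$), and the two share the same conditional entropy. Applying \cref{thm:relative_smorodinsky_k_dependent} with background $W^{(n-1)}$ produces a finitary isomorphism between $X^{(n)}$ and $U^{(n)}$ relative to $W^{(n-1)}$, which combines with $\Phi_{n-1}$ to give $\Phi_n$.

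Set $U := (U^{(n)})_{n \ge 1}$, an \iid\ process independent of $W$ taking values in a Polish product space. The chain rule gives $h(U) = \sum_n h(U^{(n)}) = \lim_N h(X^{(1)}, \ldots, X^{(N)} \mid W) = h(X \mid W)$, using that the partial processes $X^{(N)}$ increase to $X$. The pointwise limit $\Phi := \lim_n \Phi_n$ is a finitary isomorphism from $(X, W)$ to $(U, W)$ relative to $W$: almost surely $X^{(n)}_0 = X_0$ for every $n$ beyond some random $n_0$, and $X^{(n_0)}_0$ is then a finitary function of $(W, U^{(1)}, \ldots, U^{(n_0)})$ through $\Phi_{n_0}$; conversely, each $U^{(n)}_0$ is a finitary function of $(W, X^{(1)}, \ldots, X^{(n)})$, hence of $(W, X)$. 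Running the same construction for $\tilde X$ yields an \iid\ process $\tilde U$ with the analogous properties and $h(\tilde U) = h(U)$; the finitary isomorphism of $U$ and $\tilde U$ provided by the Polish-valued Keane--Smorodinsky extension does not involve $W$, hence is automatically relative to $W$. The chief obstacle is guaranteeing that the stop-finitary property of $\Phi_n$ does not degrade in the limit, which is handled by the almost-sure stabilization of each coordinate of $X^{(n)}$ together with the fact that $\Phi_n$ extends $\Phi_{n-1}$.
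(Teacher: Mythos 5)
Your proposal is correct and takes essentially the same approach as the paper: both iterate \cref{thm:relative_smorodinsky_k_dependent} over the approximating sequence of partial processes, pass to the limit to produce an isomorphism (relative to $W$) with an \iid\ process taking values in a countable product space, and then chain via the Polish-valued Keane--Smorodinsky extension (\cref{thm:top-finitary}). A minor cosmetic difference is that you transport at each stage through $\Phi_{n-1}$ so the background becomes $(W,U^{(1)},\dots,U^{(n-1)})$ rather than $(W,X^{(1)},\dots,X^{(n-1)})$ as in the paper, which makes aperiodicity and ergodicity of the background manifest; and your entropy bookkeeping $h(U)=h(X\mid W)$ makes explicit a step the paper leaves implicit.
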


\begin{cor}\label{cor:finitarily_factor_relatively_finitarily_dependent}
	Let $X$ be  a countable-valued ergodic process. Suppose that $X$ has a finitary factor $W$ relatively to which it is finitarily pro-dependent. Then  $X$ is finitarily isomorphic to \iid$\times W$.
\end{cor}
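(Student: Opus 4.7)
The plan is to apply \cref{thm:pro-dependent} to $X$ together with a suitable independent \iid\ process, and then to ``absorb'' $W$ into $X$ using the assumption that $W$ is a finitary factor of $X$.

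First I would choose a countable-valued \iid\ $\Z$-process $\tilde X$ with $H(\tilde X_0) = h(X\mid W)$ (any target entropy in $[0,\infty]$ is achievable by a countable alphabet, e.g.\ via Bernoulli or geometric distributions) and couple it so that $\tilde X$ is independent of $W$. Next I would verify that $\tilde X$ is finitarily dependent, hence finitarily pro-dependent, relative to $W$: being \iid\ and independent of $W$, the process $\tilde X$ is $0$-dependent relative to $W$, and each conditional law $\cL(\tilde X_{[n-m,n+m]}\mid W)$ equals the deterministic product law of $\tilde X_{[n-m,n+m]}$, so the process of conditional laws is constant in $W$ and therefore trivially a stop-finitary factor of it. By construction $h(\tilde X\mid W)=H(\tilde X_0)=h(X\mid W)$, and $W$ is ergodic as a factor of the ergodic process $X$.

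Applying \cref{thm:pro-dependent} then produces a finitary isomorphism between $(X,W)$ and $(\tilde X, W)$ that fixes the $W$-coordinate. To conclude, I would use the hypothesis that $W$ is a finitary factor of $X$: if $\varphi$ denotes the finitary factor map $X\to W$, then $x\mapsto(x,\varphi(x))$ is a finitary isomorphism from $X$ to $(X,W)$ whose inverse is the (trivially finitary) projection onto the first coordinate. Composing this with the isomorphism given by \cref{thm:pro-dependent} yields a finitary isomorphism from $X$ to $(\tilde X,W)$, and since $\tilde X\perp W$ the target has the distribution \iid$\times W$, as desired.

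The only subtle point is that \cref{thm:pro-dependent} is stated under the assumption that $W$ is aperiodic, which need not follow from the ergodicity of $X$ alone (for example, $X$ may be the product of a nontrivial \iid\ process with a random-phase periodic factor). I expect this to be the main formal obstacle. I would handle it exactly as indicated in the remark following \cref{thm:relative_smorodinsky_k_dependent}: the aperiodicity assumption can be dropped with the same scheme. In the remaining periodic ergodic case, $X$ is a random phase of a single finite orbit, $h(X\mid W)$ is computable explicitly, and the isomorphism reduces to an elementary bijection between the finite orbits of $X$ and of $\text{\iid}\times W$.
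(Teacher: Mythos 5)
Your proof is correct and is the natural reduction; the paper states this corollary without a proof, treating it as an immediate consequence of \cref{thm:pro-dependent}. The steps --- choosing an \iid\ $\tilde X$ independent of $W$ with $H(\tilde X_0) = h(X\mid W)$, verifying that $\tilde X$ is finitarily (pro-)dependent relative to $W$, applying \cref{thm:pro-dependent} to get a finitary isomorphism between $(X,W)$ and $(\tilde X,W)$ that fixes $W$, and absorbing the left $W$-coordinate via $x\mapsto(x,\varphi(x))$ (which is where the hypothesis that $W$ is a finitary factor of $X$ is used) --- are all sound, and this is surely what the authors intend.

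You are also right that the aperiodicity of $W$ is the one real gap, since ergodicity of $X$ does not force its factor $W$ to be aperiodic. Your closing sentence, however, conflates ``$W$ periodic'' with ``$X$ periodic'': after the remark following \cref{thm:relative_smorodinsky_k_dependent} removes the aperiodicity hypothesis there, the remaining non-aperiodic case is that the conditioning process $W$ is a periodic or trivial ergodic process, and in that case $X$ may perfectly well still be aperiodic (your own product example shows this), so the residual argument is \emph{not} an elementary bijection of finite orbits. What rescues the iterative proof of \cref{thm:pro-dependent} is that the conditioning process $(W,X^{(1)},\dots,X^{(i-1)})$ becomes aperiodic as soon as some $X^{(i)}$ is not a.s.\ determined by $W$: a $K$-dependent process (with $K$ a factor of $W$) that is conditionally nontrivial given a periodic $W$ cannot itself be periodic, since it has coordinates that are conditionally independent yet equal under any candidate period. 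Thus only the initial iterations need the non-aperiodic variant of \cref{thm:relative_smorodinsky_k_dependent}, and if every $X^{(i)}$ is determined by $W$, then $X$ is a finitary factor of $W$ and the conclusion is immediate.
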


\subsection{Proof of \cref{thm:smorodinsky_k_dependent}, assuming \cref{thm:relative_smorodinsky_k_dependent}\label{sec:thm_smorodinsky}}

The first ingredient in the proof is a so-called marker process. A \textbf{marker process} is any non-trivial $\{0,1\}$-valued process. We will be interested in marker processes which arise as finitary factors of a given process $Z$. We say that a marker process is a \textbf{marker process for $Z$} if it is a finitary factor of $Z$. A typical way to construct a marker process for $Z$ is to look at the locations of occurrences of some fixed pattern. That is, given a pattern $u=(u_0,\dots,u_m)$, we consider the marker process $M$ defined by
\[ M_i = \1_{\{ Z_i=u_0,\, Z_{i+1}=u_1,\dots,\,Z_{i+m}=u_m \}} .\]
We call any such marker process an \textbf{occurrence marker process}.

Our first goal will be to construct a marker process $M$ for a given finitely dependent process $X$ in such a way that makes $X$ finitarily dependent relatively to $M$. To illuminate the potential difficulty in doing so, suppose that $X$ is $k$-dependent and consider any occurrence marker process $M$ for $X$. Then $X_{(-\infty,-k)}$ and $X_{[0,\infty)}$ are conditionally independent given that $M_0=1$.
On the other hand, this is no longer necessarily true when also conditioning on the absence or presence of other markers in the vicinity: given that $M_0=1$ and given $M_{(-\infty,0)}$, we cannot in general say that $X_{(-\infty,-k)}$ and $X_{[0,\infty)}$ are conditionally independent (even if the pattern used for the occurrence marker process has no self overlaps).
Similarly, if $i \in \Z$ is a random integer which depends on $M$ and satisfies $M_i=1$ almost surely, then it is not necessarily the case that $X_{(-\infty,i-k)}$ and $X_{[i,\infty)}$ are conditionally independent given $M$.
Let us further illustrate the problem by an example: let $Y$ be any non-trivial $\{0,1\}$-valued \iid\ process and let $X$ be defined by $X_i=2$ if $Y_i = Y_{i+1}$ and $X_i=Y_i$ otherwise. Note that $X$ is a block factor of $Y$ (in particular, $X$ is finitely dependent) and the factor map is invertible (and the inverse is finitary). Let $M$ be the occurrence marker process for $X$ given by the locations of 2s, i.e., $M_i := \1_{\{X_i=2\}}$. It follows that $X$ is a 2-to-1 extension of $M$, and thus not a finitary factor of \iid\ relative to $M$ and also not finitarily dependent relative to $M$. The obstruction to the latter can be seen as two-fold: there is no process $K$ for which $X$ is $K$-dependent relative to $M$, but also, unless $Y$ happens to consist of \emph{unbiased} bits, the conditional law of $X_0$ given $M$ is not a finitary function of $M$ (in fact, it is not even a topo-finitary function of $M$), so that even if such $K$ existed, $X$ would not be \emph{finitarily} $K$-dependent relative to $M$ (recall \cref{rem:finitary-cond-law}). Moreover, $X$ and $M$ are two finitely dependent processes with equal entropy and having states with a common distribution, but there is no isomorphism of the two which maps one state to the other. 
These somewhat subtle issues lead us to the next definition.

Let $X$ be a process. We say that a marker process $M$ for $X$ is a \textbf{good marker process} for $X$ if there exists an integer $m \ge 1$ such that on the event that $M_0=1$, almost surely, $X_{(-\infty,-m]}$ and $X_{[m,\infty)}$ are conditionally independent given $M$ and their conditional distributions depend only on $M_{(-\infty,0]}$ and $M_{[0,\infty)}$, respectively.
Equivalently, if $(i_j)_{j=-\infty}^\infty$ is a random sequence of integers which is measurable with respect to $M$ and almost surely satisfies $i_{j+1}>i_j+2m$ and $M_{i_j}=1$ for all $j$, then, given $M$, almost surely, $\{ X_{[i_j+m,i_{j+1}-m]}\}_j$ are conditionally independent and the conditional distribution of each $X_{[i_j+m,i_{j+1}-m]}$ depends only on $M_{[i_j,i_{j+1}]}$.


\begin{lemma}\label{lem:good-markers}
Let $X$ be a process and let $M$ be a good marker process for $X$. Then $X$ is finitarily dependent relatively to $M$.
\end{lemma}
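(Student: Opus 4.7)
The plan is to build, from the marker process $M$ and the parameter $m$ appearing in the definition of a good marker, an explicit integer-valued process $K$ which is a stop-finitary factor of $M$, and to check that $X$ is finitarily $K$-dependent relative to $M$ with this $K$. For each $n \in \Z$ let $A^-(n)$ be the largest integer $i \le n-m$ with $M_i = 1$ and $A^+(n)$ the smallest integer $i \ge n+m$ with $M_i = 1$; then set
\[ K_n := \max\bigl(n - A^-(n),\, A^+(n) - n\bigr) + m. \]
Reducing to ergodic components on which $M$ is non-trivial, Poincar\'e recurrence makes $A^\pm(n)$ almost surely finite for every $n$, so $K_n < \infty$ a.s.; moreover $K_0$ is determined by $M_{[A^-(0), A^+(0)]}$, a random finite window, so $K$ is stop-finitary in $M$.

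The first main task is to verify the $K$-dependence condition. The constant $+m$ in the definition of $K_n$ is chosen precisely so that the following holds: if $a < b$ and $b - a > K_a$, then $A^+(a) \in [a+m, b-m]$, producing a marker of $M$ inside $[a+m, b-m]$. Given $A, B \subset \Z$ that are measurable with respect to $M$ and satisfy $|a-b| > \max(K_a, K_b)$ for all $a \in A, b \in B$, I would enumerate $A \cup B$ in increasing order and partition it into maximal same-side runs $R_1, R_2, \ldots$ alternately contained in $A$ and $B$; between consecutive runs one obtains a separating marker $p_j \in [\max R_j + m,\, \min R_{j+1} - m]$ with $p_{j+1} - p_j \ge 2m$. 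An induction on the number of selected markers, applying the two-sided conditional independence in the definition of good marker at each $p_j$, yields mutual conditional independence of $X_{R_1}, X_{R_2}, \ldots$ given $M$, from which $X_A \perp X_B \mid M$ follows by grouping the runs by which of $A$ or $B$ they lie in.

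The remaining task is to show that $(\cL(X_{[n-\ell,n+\ell]} \mid M))_{n \in \Z}$ is a stop-finitary factor of $M$ for every $\ell \ge 0$, which by equivariance reduces to $n = 0$. Take $i_L$ to be the largest marker $\le -\ell-m$ and $i_R$ the smallest marker $\ge \ell+m$; both are almost surely finite stopping times for $M$. The equivalent formulation of the good marker property, applied to any well-separated subsequence of markers including $i_L$ and $i_R$, gives that the conditional law of $X_{[i_L+m,\,i_R-m]}$ given $M$ is already determined by $M_{[i_L,i_R]}$. Since $[-\ell,\ell] \subseteq [i_L+m, i_R-m]$, marginalising produces the required expression of $\cL(X_{[-\ell,\ell]} \mid M)$ as a stop-finitary function of $M$, completing the two clauses in the definition of finitary $K$-dependence.

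The main obstacle I expect is the middle step: converting the pointwise hypothesis $|a-b| > \max(K_a, K_b)$ into an actual conditional independence of $X_A$ and $X_B$ when the two sets are heavily interleaved in $\Z$. A single marker only implements a two-sided split of the line, so the conclusion has to be assembled by iterating the two-sided split along the chosen sequence $(p_j)$. The induction is somewhat delicate in the boundary case $p_{j+1} - p_j = 2m$, where the intermediate block $[p_j + m, p_{j+1} - m]$ collapses to a single point; here mutual conditional independence of three consecutive regions $U, V, W$ still follows from combining $U \perp (V,W) \mid M$ and $(U,V) \perp W \mid M$ via the standard chain-rule identity for conditional independence, and I would have to spell this out carefully to make the induction rigorous.
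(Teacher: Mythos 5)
Your proof follows essentially the same route as the paper's: it defines the same process $K$ (up to a harmless $\ell \ge m$ versus $\ell > m$ discrepancy) and then verifies finitary $K$-dependence directly, which the paper declares ``straightforward to check'' without elaboration. Your elaboration is correct. One small simplification: the ``boundary case $p_{j+1}-p_j = 2m$'' you flag in the last paragraph cannot actually occur. If $a=\max R_j$ and $b=\min R_{j+1}$ lie in opposite sets, then $b-a > K_a \ge (A^+(a)-a)+m$ gives $p_j = A^+(a) \le b-m-1$ \emph{strictly}, while $p_{j+1} \ge \max R_{j+1}+m \ge b+m$, so $p_{j+1}-p_j \ge 2m+1 > 2m$ and the equivalent formulation of a good marker process applies with room to spare; no chain-rule patch is needed. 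A similar sharpening is advisable in the second half of the argument when $\ell = 0$, where your choice $i_L \le -\ell-m$, $i_R \ge \ell+m$ only guarantees $i_R - i_L \ge 2m$; pushing $i_L$ one step further left (or equivalently selecting the markers at distance $> m$ rather than $\ge m$, as the paper does) restores the strict gap $>2m$ uniformly in $\ell$.
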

\begin{proof}
Let $m$ be the integer guaranteed by the definition of a good marker process.
Define a process $K$ by  $K_i := m+\max\{\ell^+_i,\ell^-_i\}$, where $\ell^\pm_i = \min\{ \ell>m : M_{i \pm \ell}=1 \}$.
Clearly, $K$ is a finitary factor of $M$, and hence also of $X$.
It is straightforward to check that $X$ is finitarily $K$-dependent relatively to $M$.
\end{proof}

Suppose now that $X$ is finitely dependent.
While we have seen that an occurrence marker process for $X$ need not be a good marker process for $X$, as we now show, such a marker process always contains within it a good marker process. A marker process $M'$ is a \textbf{finitary dilution} of a marker process $M$ if it is a finitary factor of it and $M'_n \le M_n$ for all $n$. We will also need to know that the dilution procedure does not depend on $X$, but only on the marker process itself. To state this precisely, it is convenient to allow any marker process which is a block factor of $X$, rather than only occurrence marker processes for $X$.

\begin{lemma}[good marker process]\label{lem:diluted-marker-process-universal}
Let $M$ be a finitely dependent maker process. Then for every $k,\ell \in \N$ there exists a marker process $M'$ which is a finitary dilution of $M$ such that if $X$ is $k$-dependent and $M$ is an $\ell$-block factor of $X$, then $M'$ is a good marker process for $X$.   
\end{lemma}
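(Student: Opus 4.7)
The plan is to construct $M'$ as an occurrence marker of a specific long pattern of $M$. Let $r$ be such that $M$ is $r$-dependent and fix $R := \max\{r,\, k + \ell\}$. Since $M$ is non-trivial, $\Pr(M_0 = 1) > 0$, so among the finitely many patterns in $\{0,1\}^{2R+1}$ with central coordinate $1$, at least one pattern $\sigma$ with $\sigma_R = 1$ satisfies $\Pr(M_{[-R, R]} = \sigma) > 0$. Fix such a $\sigma$ and define
\[
 M'_i := \mathbf{1}\bigl\{M_{[i-R,\,i+R]} = \sigma\bigr\}.
\]
Then $M'$ is a block factor of $M$, hence a finitary factor; $M' \le M$ since $\sigma_R = 1$; and $M'$ is non-trivial since $\Pr(M'_0 = 1) = \Pr(M_{[-R, R]} = \sigma) > 0$. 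So $M'$ is a finitary dilution of $M$.

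To show $M'$ is a good marker for any $X$ satisfying the hypothesis, take $m := R + \ell + k + 1$. On the event $\{M'_0 = 1\}$, the equality $M_{[-R, R]} = \sigma$ is, via the $\ell$-block factor relation $M_j = \phi(X_{[j-\ell,\, j+\ell)})$, a constraint on $X_{[-R-\ell,\, R+\ell-1]}$. By the choice of $m$, both $X_{(-\infty, -m]}$ and $X_{[m, \infty)}$ lie at distance strictly greater than $k$ from this central constraint window and from each other, so by the $k$-dependence of $X$ the three blocks $X_{(-\infty, -m]}$, $X_{[-R - \ell,\, R + \ell - 1]}$, and $X_{[m, \infty)}$ are unconditionally mutually independent. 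Hence the constraint coming from $M'_0 = 1$ alone does not couple past and future.

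The main obstacle is to handle the constraints imposed by $M'$ at positions $i \ne 0$, each supported on the $X$-window $[i - R - \ell,\, i + R + \ell - 1]$; some of these windows may straddle the past/middle or middle/future boundary. The plan is to exploit the $r$-dependence of $M$ together with $R \ge r$: the event $\{M'_i = 1\}$ depends only on $M_{[i - R,\, i + R]}$, and such $M$-windows at well-separated $M'$-locations are mutually independent as events in $M$. Combined with the $k$-dependence of $X$ and a careful conditional independence argument, this should yield that the joint conditional distribution of $X$ given $M'$ factors across the central window into a left piece measurable with respect to $M'_{(-\infty, 0]}$ and a right piece measurable with respect to $M'_{[0, \infty)}$. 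This simultaneously gives the conditional independence of $X_{(-\infty, -m]}$ and $X_{[m, \infty)}$ given $M'$ and the one-sided measurability of their conditional distributions. The technical heart will be making the factorization rigorous for the constraint windows whose $X$-supports straddle the boundary, which will likely require a careful inductive or direct calculation combining the $r$- and $k$-dependence.
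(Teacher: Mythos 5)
Your construction is genuinely different from the paper's, and the proof is incomplete in a way you explicitly acknowledge: you stop at ``this should yield'' and ``will likely require a careful inductive or direct calculation combining the $r$- and $k$-dependence.'' That deferred step is not a technicality but precisely the point of the lemma, and your construction does not give the structural property needed to carry it out. Taking $M'$ to be a pure occurrence marker (even with a long pattern) leaves the events $\{M'_j=0\}$ for $|j|$ small supported on $X$-windows that straddle the center. Your preliminary observation --- that $L:=X_{(-\infty,-m]}$, $C':=X_{[-R-\ell,\,R+\ell-1]}$ and $R_X:=X_{[m,\infty)}$ are mutually independent --- ignores the two ``gap'' regions $(-m,-R-\ell)$ and $(R+\ell-1,m)$. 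The constraints imposed by $M'_{-1},\dots,M'_{-(k+1)}$ involve $X$ in the left gap (which is within distance $k$ of $L$), and those from $M'_1,\dots,M'_{k+1}$ involve $X$ in the right gap (within distance $k$ of $R_X$); since $M'_{(-\infty,0]}$ and $M'_{[0,\infty)}$ both see the central constraint window of $M'_0$, conditioning on the full $M'$ creates correlations across the middle that are not obviously broken by any choice of $\sigma$. The paper's discussion immediately before this lemma flags exactly this problem with occurrence markers (even self-overlap-free ones), which is why the paper does \emph{not} stop at an occurrence marker $M''$ but applies a further finitary dilution $M''\mapsto M'=\overline{M''}$ that forces $g$ consecutive zeros after each $1$. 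The key consequences of that dilution (the paper's properties (1) and (2) for $\overline{a}$) are what make the argument close: on $\{M'_0=1\}$, the event itself and $M'_{(-\infty,0]}$ are measurable with respect to $M''_{(-\infty,0]}$, while $M'_{[0,\infty)}$ is measurable with respect to $M''_{[g,\infty)}$ --- a forced buffer zone with no residual conditioning in between. Your $M'$ has no such forced buffer, so the one-sided measurability of the conditional laws of $X_{(-\infty,-m]}$ and $X_{[m,\infty)}$ required by the definition of a good marker process does not follow from what you have written. If you want to pursue your route you would need to reintroduce something like the dilution (or prove the factorization directly for a carefully chosen self-overlap-free $\sigma$ with a substantially larger $R$ and $m$), which is essentially reinventing the paper's construction.
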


\begin{proof}
Fix $M,k,\ell$.
We first construct a dilution $M''$ of $M$ as a block factor of $M$, and then we construct $M'$ as a finitary dilution of $M''$. We will then show that $M'$ satisfied the claimed property.

Since $M$ is finitely dependent, there exist $C,c>0$ such that $\Pr(M_{[0,m)}=u)<Ce^{-cm}$ for any $m \in \N$ and any $u \in \{0,1\}^{m}$. Choose $m$ large enough so that $Ce^{-cm} < \frac{1}{k+\ell+m}$ and choose any $u \in \{1\} \times \{0,1\}^{m-1}$ for which $\Pr(M_{[0,m)}=u)>0$. Let $M''$ be the occurrence marker process for $M$ given by the pattern $u$. Then $M''$ is an $m$-block factor of $M$ and a finitary dilution of $M$, and it satisfies that $\Pr(M''_0=1)<\frac{1}{k+\ell+m}$.
In particular, gaps of size at least $g:=k+\ell+m$ between consecutive 1s in $M''$ occur with positive probability, and by ergodicity, also infinitely often in the past almost surely. We now explain the relevance of this.

Let $\Omega \subset \{0,1\}^\Z$ be the collection of bi-infinite sequences $(a_n)_{n \in \Z} \in \{0,1\}^\Z$ with the property that
\[
\inf \left\{ n \in \Z~:~a_{[n-g,n]} = 0^g1
\right\} = -\infty.
\]
For a sequence $a \in \Omega$, we define a sequence $\overline{a} \in \Omega$ by choosing a subset of 1s in $a$ as follows: we first take those 1s which are preceded by $g$ zeros, then we force the $g$ symbols succeeding these 1s to be zeros, and we repeat indefinitely. Formally, we first define $\varphi \colon \Omega \to \Omega$ by
\[ \varphi(a)_n := \begin{cases}
0 & \text{if } 1 \le \ell_n \le g \\
a_n & \text{otherwise}
\end{cases},\quad\text{where } \ell_n := \min \{ \ell \ge 0 : a_{[n-\ell-g,n-\ell]} = 0^g1 \}, \]
and then we define
\[ \overline{a} := \lim_{i \to \infty} \varphi^i(a) .\]
Note that $\varphi(a)$ is a dilution of $a$ and that $\varphi$ is equivariant. In particular, $\overline{a}$ is well defined and is a dilution of $a$, and the map $a \mapsto \overline{a}$ is equivariant. Observe also that every 1 in $\overline{a}$ is preceded/succeeded by $g$ zeros, that $\varphi(\overline{a})=\overline{a}$ and that any other $b$ such that $\overline{a} \le b \le a$ has $\overline{b}=\overline{a}$ and $\varphi(b) \neq b$.
Moreover, it is straightforward to check that if $a\in\Omega$ and $n \in \Z$, then
\begin{enumerate}
 \item The value of $\bar{a}_n$ depends only on a finite past of $a$ up to $n$. More precisely, if $a_{[m,m+g]}=0^g1$ for some $m \le n-g$ then  $\bar{a}_n=\bar{b}_n$ for any $b \in \Omega$ which agrees with $a$ on $[m,n]$.
 \item If $\bar{a}_n=1$ then the future of $\bar{a}$ after $n$ depends only on the future of $a$ after $n+g$. More precisely, if $\bar{a}_n=1$ then $\bar{a}$ and $\bar{b}$ agree on $[n,\infty)$ for any $b \in \Omega$ which has $\bar{b}_n=1$ and agrees with $a$ on $[n+g,\infty)$.
\end{enumerate}

Let us come back to the marker process. As $M'' \in \Omega$ almost surely, the diluted marker process $M' := \overline{M''}$ is defined almost surely as a factor of $M''$. The first property above shows that $M'$ is a finitary dilution of $M''$ and thus also of $M$.
Toward showing that $M'$ satisfies the claimed property, let $X$ be a $k$-dependent process such that $M$ is an $\ell$-block factor of $X$. Since $M''$ is an $(\ell+m)$-block factor of $X$, the first and second properties together imply that $(M'_{(-\infty,0]},X_{(-\infty,-g]})$ and $(M'_{[0,\infty)},X_{[g,\infty)})$ are conditionally independent given that $M'_0=1$, since the former depends only on $X_{(-\infty,\ell+m)}$, as does the event we are conditioning on, and the latter depends only on $X_{[g,\infty)}$ given the conditioning. It follows from this that $M'$ is a good marker process for $X$.
\end{proof}

We are almost ready to describe the reduction of \cref{thm:smorodinsky_k_dependent} to \cref{thm:relative_smorodinsky_k_dependent}. Before doing so, we need one additional claim, which is essentially taken from~\cite{smorodinsky1992finitary}. Say that two finitely dependent processes are \textbf{twins} if they have the same entropy and there exist two occurrence marker processes, one for each process, which have the same distribution.

\begin{lemma}[\cite{smorodinsky1992finitary}]\label{lem:twins}
Let $X$ and $\tilde{X}$ be two countable-valued finitely dependent processes of equal entropy. Then there exist four finitely dependent processes $X^1,X^2,X^3,X^4$ such that every two consecutive processes in $(X,X^1,X^2,X^3,X^4,\tilde{X})$ are twins.
\end{lemma}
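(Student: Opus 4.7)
The strategy is a classical symbol-substitution chain: I would successively build finitely dependent processes of equal entropy, arranging that consecutive processes in the chain share an occurrence marker distribution and thus form a twin pair. The fundamental operation is the following \emph{recoding step}: given a finitely dependent process $Z$ with an occurrence marker $M$ corresponding to a pattern $u$, and given a target ``canonical'' marker distribution $N$, produce a new finitely dependent process $Z'$ of the same entropy as $Z$ such that $Z'$ still has $M$ as its occurrence marker of $u$ (so that $(Z,Z')$ is a twin pair) and simultaneously contains some freshly designed pattern $v$ whose occurrence marker has distribution $N$.

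First I would fix a long, rare, non-self-overlapping pattern $u$ in $X$ and take $M$ to be its occurrence marker in $X$. Since $X$ is $k$-dependent and $M$ is a block factor of $X$, the process $M$ is itself finitely dependent; by choosing $u$ sufficiently long and rare we can arrange that the inter-marker gaps of $M$ have arbitrarily large typical length and light exponential tails. Symmetrically, I fix a long rare pattern $\tilde u$ in $\tilde X$ with occurrence marker $\tilde M$.

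The construction then proceeds as follows. Build $X^1$ by keeping the positions of $u$ in $X$ fixed, but replacing the content in each gap between consecutive occurrences of $u$ by an independent sample from a designated codebook whose length distribution matches the gap length and whose entropy per symbol equals $h(X)$; the codebook is simultaneously designed to plant a new pattern $v^{(1)}$ with a prescribed small density, yielding an occurrence marker $M^{(1)}$ of near-Bernoulli ``canonical'' form, while forbidding spurious occurrences of $u$. Thus $(X,X^1)$ is a twin pair via $M$. Symmetrically build $X^4$ from $\tilde X$ with canonical marker $M^{(4)}$, making $(X^4,\tilde X)$ a twin pair via $\tilde M$. Finally, build $X^2$ and $X^3$ by further applications of the recoding step so that $(X^1,X^2)$ share $M^{(1)}$, $(X^2,X^3)$ share an intermediate canonical marker $M^{(2)}$, and $(X^3,X^4)$ share $M^{(4)}$; the densities of the canonical markers and codebook parameters can be adjusted so that the entropy budget is preserved throughout, which is where the hypothesis $h(X)=h(\tilde X)$ enters.

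The main obstacle is the precise engineering of the recoding step. One must design a family of variable-length codebooks, indexed by gap length, such that when their outputs are concatenated with the marker pattern between them, the resulting process is finitely dependent (requiring buffer subwords that decouple adjacent codewords), has exactly the target entropy (requiring careful bookkeeping of code lengths versus entropy densities), and plants the designated new pattern at positions whose joint law is exactly the prescribed marker distribution. This combinatorial coding construction, essentially carried out in~\cite{smorodinsky1992finitary}, is the technical heart of the lemma and the one step I would invoke rather than reproduce in detail.
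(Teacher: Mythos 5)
Your proposal captures the right high-level idea — chain together twin pairs by planting fresh marker patterns via a codebook recoding — and correctly identifies that the technical heart is Smorodinsky's Section~3 construction, which the paper also invokes as a black box. The structural difference is in how the middle link is handled. The paper deliberately arranges the chain so that $X^2$ and $X^3$ are genuinely \emph{\iid} processes: Smorodinsky's Section~3 result is stated precisely as producing, from a given finitely dependent $X$, a finitely dependent twin $X^1$ of $X$ which is in turn a twin of an \iid\ process $X^2$. Running this once for $X$ and once for $\tilde X$, the equal-entropy hypothesis is then used exactly once, via the earlier and more elementary Keane--Smorodinsky fact (from~\cite[Section~2]{keane1979bernoulli}) that two equal-entropy countable-valued \iid\ processes are twins. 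Your version instead proposes applying the recoding step a third time between two non-\iid\ processes $X^2$ and $X^3$ sharing a fresh ``canonical'' marker $M^{(2)}$, which is plausible but not what the cited result directly gives you; you would need to verify that the codebook construction supports simultaneously matching a target entropy and planting a prescribed marker law between two arbitrary finitely dependent processes, rather than just mapping onto an \iid\ target. The paper's route is cleaner because it isolates the ``equal entropy enters here'' step into the self-contained \iid-to-\iid\ twinning claim of Keane--Smorodinsky, avoiding any need to re-engineer the codebook step.
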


\begin{proof}
In~\cite[Section~3]{smorodinsky1992finitary}, it is shown that, given a finitely dependent process $X$, there exists a finitely dependent process $X^1$ and an \iid\ process $X^2$ such that $X$ and $X^1$ are twins and $X^1$ and $X^2$ are twins. Applying this for $\tilde{X}$, we also get an \iid\ process $X^3$ and a finitely dependent process $X^4$ such that $\tilde{X}$ and $X^4$ are twins and $X^3$ and $X^4$ are twins. It remains to explain that the equal-entropy \iid\ processes $X^2$ and $X^3$ are twins. That this is indeed the case was shown in~\cite[Section~2]{keane1979bernoulli}.\footnote{The papers~\cite{keane1979bernoulli,smorodinsky1992finitary} deal only with finite-valued processes, but the arguments in \cite[Section~2]{keane1979bernoulli} and \cite[Section~3]{smorodinsky1992finitary} apply also to countable-valued processes with finite or infinite entropy.}
\end{proof}

\begin{proof}[Proof of \cref{thm:smorodinsky_k_dependent}]
Suppose that $X$ and $\tilde{X}$ are two finitely dependent processes of equal entropy taking at most countable many values. By the previous lemma, it suffices to show that they are finitarily isomorphic under the additional assumption that they are twins. We may thus assume this so that $X$ and $\tilde X$ have occurrence marker processes with the same distribution. Equivalently, there exists a marker process $M$ which (under some joining) is an occurrence marker process for $X$ and for $\tilde{X}$. By \cref{lem:diluted-marker-process-universal}, there exists a marker process $M'$ (obtained as a finitary dilution of $M$) which is a good marker process for $X$ and for $\tilde X$. \cref{lem:good-markers} tells us that $X$ and $\tilde X$ are each finitarily dependent relatively to $M'$. Finally, $h(X \mid M')=h(X)-h(M')=h(\tilde X)-h(M')=h(\tilde{X} \mid M')$. Thus, we have arrived at the situation of \cref{thm:relative_smorodinsky_k_dependent} with a process $W=M'$ which is ergodic and aperiodic (since it is a factor of a finitely dependent process). The conclusion of the theorem in this case is that $X$ and $\tilde{X}$ are finitarily isomorphic relatively to $M'$. However, since $M'$ is a finitary factor of each of $X$ and $\tilde X$, this means that $X$ and $\tilde{X}$ are finitarily isomorphic.
\end{proof}

\subsection{The Keane--Smorodinsky marriage lemma}
\label{sec:marriage}
We now present a certain formulation of the Keane--Smorodinsky marriage lemma \cite{keane1977class}. We give a self-contained proof here, closely following~\cite[Section $\S 3$]{keane1977class}.

Let $U$ and $V$ be two finite sets and 
let $\lambda$ be a probability measure on $U\times V$. We say that $v \in V$ is $\lambda$-\textbf{committed} if there is at most one $u \in U$ such that $\lambda(\{(u,v)\})>0$.
Since $U$ and $V$ are finite, a measure $\tilde\lambda$ on $U \times V$ is absolutely continuous with respect to $\lambda$ if and only if $\tilde \lambda(\{ (u,v)\}) =0$ whenever $\lambda( \{(u,v)\}) =0$ for $(u,v)\in U \times V$.


\begin{lemma}\label{lem:coupling_reduction}
Let $(U\times V,\lambda)$ be coupling of $(U,\rho)$ and $(V,\sigma)$ with $U$ and $V$ finite.
Then there exists another coupling $(U \times V,\tilde \lambda)$, which is absolutely continuous with respect to $\lambda$, such that
\[\left|\left\{v \in V : v \mbox{ is not } \tilde\lambda \mbox{-committed} \right\} \right| \le |U|-1.\]
\end{lemma}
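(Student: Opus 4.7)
My plan is to prove this via an extreme point argument in the transportation polytope associated with $\lambda$. Let
\[
\mathcal{P} := \{\mu \text{ probability measure on } U \times V : \mu \ll \lambda,\ \mu(\cdot \times V) = \rho,\ \mu(U \times \cdot) = \sigma\}
\]
be the set of couplings of $(\rho,\sigma)$ which are absolutely continuous with respect to $\lambda$. Since $\lambda \in \mathcal{P}$, this set is non-empty; it is also convex and compact as a bounded subset of $\R^{U \times V}$ cut out by finitely many linear equalities and the inequalities $\mu(u,v) \geq 0$ (for $(u,v)$ in the support of $\lambda$) and $\mu(u,v) = 0$ (otherwise). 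So I would let $\tilde\lambda$ be an arbitrary extreme point of $\mathcal{P}$.

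The crux of the argument is the standard transportation-polytope fact that the support
\[
S := \{(u,v) \in U \times V : \tilde\lambda(u,v) > 0\}
\]
of an extreme point is acyclic when viewed as a set of edges of the bipartite graph with parts $U$ and $V$. I would establish this by contradiction: given a cycle $(u_1,v_1),(u_2,v_1),(u_2,v_2),\ldots,(u_k,v_k),(u_1,v_k)$ in $S$, define signed perturbations $\pm \epsilon$ alternating around the cycle; for small $\epsilon > 0$, both $\tilde\lambda + \epsilon\delta$ and $\tilde\lambda - \epsilon\delta$ are in $\mathcal{P}$ (the row and column sum constraints are preserved by the alternating signs, the absolute-continuity constraint is preserved since the perturbation is supported on $S$, and $\epsilon$ can be chosen so that nonnegativity is preserved), contradicting extremality.

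Once $S$ is known to be acyclic, I would finish with a simple degree count. Partition $V$ as $V_0 \sqcup V_1 \sqcup V_{\geq 2}$ according to whether $\deg_S(v)$ is $0$, $1$, or $\geq 2$. Every $v$ which is \emph{not} $\tilde\lambda$-committed lies in $V_{\geq 2}$, so it is enough to bound $|V_{\geq 2}|$. Let $E = |S|$ and let $U_0 = \{u \in U : \deg_S(u) \geq 1\}$; counting edge-endpoints on the $V$ side gives
\[
E \;\geq\; |V_1| + 2|V_{\geq 2}|,
\]
while the forest inequality applied to the subgraph induced on $U_0 \cup V_1 \cup V_{\geq 2}$ gives
\[
E \;\leq\; |U_0| + |V_1| + |V_{\geq 2}| - c \;\leq\; |U| + |V_1| + |V_{\geq 2}| - 1,
\]
where $c \geq 1$ is the number of connected components (at least one, since $\rho$ and $\sigma$ being probability measures forces $E \geq 1$). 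Combining the two inequalities yields $|V_{\geq 2}| \leq |U| - 1$, which is exactly what we need.

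The most delicate step will be the acyclic-support lemma, but it is classical (a specialization of the fact that basic feasible solutions of the transportation problem correspond to spanning forests), and the cycle-perturbation argument writes itself once the alternating-sign perturbation is set up carefully. Everything else is bookkeeping.
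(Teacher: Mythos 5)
Your argument is correct, and it takes a recognizably different route from the paper's. The paper works with the multi-graph $G(\lambda)$ on vertex set $U$ whose edges are elements of $A(\lambda) = \{(\{u_1,u_2\},v) : \lambda(u_1,v),\lambda(u_2,v) > 0\}$; since $|A(\lambda)|$ bounds the number of non-committed $v$'s, the paper shows that whenever $|A(\lambda)| \ge |U|$ a simple cycle exists in $G(\lambda)$ and a perturbation along it strictly decreases $|A(\lambda)|$, and it iterates until $|A(\tilde\lambda)| < |U|$. Your proof instead passes to an extreme point of the restricted transportation polytope, invokes the classical acyclicity of the support of a basic feasible solution (in the bipartite graph on $U \sqcup V$), and finishes with the forest inequality $E \le n - c$ together with the double count $E \ge |V_1| + 2|V_{\ge 2}|$. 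The underlying mechanism (cycle perturbation) is the same, but the paper perturbs in a graph on $U$ alone, needs the bespoke potential function $|A(\lambda)|$, and stops earlier (it never claims the final support is acyclic); your version gets a stronger structural conclusion (forest support) from a standard lemma and then the bound falls out of pure bookkeeping. I checked the bookkeeping: every non-committed $v$ has $\deg_S(v)\ge 2$, the subgraph on $U_0 \cup V_1 \cup V_{\ge 2}$ has no isolated vertices and contains all of $S$, the component count satisfies $c \ge 1$ because $\tilde\lambda$ is a probability measure (so $E\ge 1$), and combining the two inequalities gives $|V_{\ge 2}| \le |U| - c \le |U|-1$ as required. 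Both proofs are valid; yours is perhaps more modular since it reduces to a textbook fact about transportation polytopes, while the paper's is self-contained and produces an explicit terminating algorithm (which it later uses when it needs a measurable, equivariant choice of $\tilde\lambda_W$).
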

\begin{proof}
	Let ${U \choose 2}$ denote the collection of unordered pairs in $U$ (subsets of $U$ having cardinality $2$). 
	Define 
	\[A(\lambda) := \left\{(\{u_1,u_2\},v)\in \tbinom U2\times V :  \lambda(\{(u_1,v)\}),\,\lambda(\{(u_2,v)\})>0 \right\}.\]
	Clearly $|A(\lambda)|$ is an upper bound on the number of $v \in V$ which are not $\lambda$-committed. To prove the lemma, we show that if $|A(\lambda)| \ge |U|$ then there exist a coupling $\tilde \lambda$ which is absolutely continuous with respect to $\lambda$ such that $|A(\tilde \lambda)|< |A(\lambda)|$. To this end, consider the multi-graph $G(\lambda)$ on the vertex set $U$ in which each $(\{u_1,u_2\},v) \in A(\lambda)$ represents an edge between $u_1$ and $u_2$. A simple cycle in $G(\lambda)$ is a sequence $C = \big((\{u_1,u_2\},v_1),(\{u_2,u_3\},v_2)\ldots,(\{u_n,u_1\},v_n)\big)$ with $e_i:=(\{u_i,u_{i+1}\},v_i) \in A(\lambda)$ (where we set $u_{n+1}:=u_1$ for notational ease) and $e_1,\ldots,e_n$ all distinct (a cycle of length $2$ is a pair of parallel edges).
	Since $|A(\lambda)| \ge |U|$, there is such a simple cycle in $G(\lambda)$.
	We assume without loss of generality that
	\[\lambda(\{(u_1,v_1)\}) = \min\big\{ \lambda(\{(u_i,v_i)\}),\,\lambda(\{(u_{i+1},v_i)\}) : 1 \le i \le n\big\}.\]
	Define $\tilde \lambda$ as follows:
	\begin{align*}
	\tilde \lambda (\{(u_i,v_i)\}) &:=\lambda (\{(u_i,v_i)\})-\lambda(\{(u_1,v_1)\}), &1 \le i \le n,\\
	\tilde \lambda (\{(u_{i+1},v_i)\}) &:= \lambda (\{(u_{i+1},v_i)\})+\lambda(\{(u_1,v_1)\}), &1 \le i \le n,
	\end{align*}
	and	 $\tilde \lambda (\{(u,v)\}):= \lambda (\{(u,v)\})$ in all other cases. It follows by direct verification that $\tilde \lambda$ is a coupling which is absolutely continuous with respect to $\lambda$ and that $|A(\tilde \lambda)|< |A(\lambda)|$. 
\end{proof}


\subsection{Proof of \cref{thm:relative_smorodinsky_k_dependent}}\label{sec:aperiodic-case}

Recall that a marker process $M$ is any non-trivial $\{0,1\}$-valued process, and that deleting some markers in a finitary manner produces a finitary dilution.
The occurrences of $M$ induce a random partition of $\Z$ into intervals: for $n \in \Z$, denote by $I^M_n$ the random interval containing $n$ in this partition (for concreteness, we include each occurrence of $M$ in the interval to its right).
The \textbf{minimal gap length} of $M$ is the largest $m \ge 1$ such that $|I^M_0| \ge m$ almost surely. Note that the density of a marker process with minimal gap length $m$ is at most $1/m$.

\begin{lemma}\label{lem:low-density-markers}
Any ergodic aperiodic marker process contains a finitarily diluted marker process with arbitrarily large minimal gap length.
\end{lemma}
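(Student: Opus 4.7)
The plan is to realize $M'$ as the occurrence marker process of a single finite pattern $u$ of sufficiently large \emph{period}. Call $u \in \{0,1\}^L$ \emph{$m$-primitive} if for every $p \in \{1,\ldots,m-1\}$ there is some $k \in [0,L-p)$ with $u_k \neq u_{k+p}$; equivalently, the least period of $u$ is at least $m$. For the given $m \geq 2$ (the case $m=1$ being trivial with $M':=M$), the principal step is to produce an $m$-primitive $u \in \{0,1\}^L$ of some length $L \geq m$ with $\Pr(M_{[0,L)}=u) > 0$.

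The existence of such a $u$ combines aperiodicity with a counting argument. Let $X \subset \{0,1\}^\Z$ denote the topological support of the law of $M$. If every point of $X$ were periodic, then writing $X = \bigcup_{p \geq 1} X_p$ with $X_p := \{x \in X : x_{n+p} = x_n \text{ for all } n\}$ closed in $X$, the Baire category theorem applied in $X$ would give some $p \geq 1$ for which $X_p$ has nonempty relative interior, hence positive measure; by ergodicity this would force $M$ to be $p$-periodic almost surely, contradicting aperiodicity. Hence $X$ contains an aperiodic bi-infinite sequence, and the Morse--Hedlund theorem then yields that the number of length-$L$ words appearing in $X$ is at least $L+1$ for every $L \geq 1$. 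On the other hand, a word of length $L$ whose least period is less than $m$ is determined by a choice of period $p \in \{1,\ldots,m-1\}$ together with its first $p$ symbols, so fewer than $2^m$ words of length $L$ fail to be $m$-primitive, uniformly in $L$. Any $L \geq 2^m$ therefore delivers an $m$-primitive $u \in \{0,1\}^L$ with $\Pr(M_{[0,L)}=u) > 0$.

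With $u$ in hand, $u$ is non-constant since its period is $\geq 2$, so fix any $j \in [0,L)$ with $u_j = 1$ and define
\[ M'_n := \mathbf{1}\bigl[M_{[n-j,\,n+L-j)} = u\bigr]. \]
Then $M'$ is a block factor of $M$, hence finitary; the inequality $M'_n \leq M_n$ holds because $u_j = 1$; and $\Pr(M'_0=1) = \Pr(M_{[0,L)}=u) > 0$ by stationarity, while $\Pr(M'_0 = 0) \geq \Pr(M_0 = 0) > 0$ (otherwise $M \equiv 1$ contradicting aperiodicity). Thus $M'$ is a finitary dilution of $M$ and itself a marker process. For the minimum gap property, suppose $M'_{n_1} = M'_{n_2} = 1$ with $0 < n_2 - n_1 < L$: comparing the two copies of $u$ on their overlap yields $u_i = u_{i+(n_2-n_1)}$ for all $i \in [0,L-(n_2-n_1))$, so $n_2-n_1$ is a period of $u$ and hence at least $m$ by $m$-primitivity; while $n_2 - n_1 \geq L \geq m$ is immediate otherwise. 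Therefore the minimal gap length of $M'$ is at least $m$.

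The main obstacle is the existence in Step 1: extracting an $m$-primitive positive-probability word from the bare hypotheses of aperiodicity and ergodicity. The combination of the Baire argument (to produce aperiodic points in the support) with Morse--Hedlund (to lower-bound complexity) is what drives the counting argument; everything after that is a direct combinatorial verification.
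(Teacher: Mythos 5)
Your proof is correct, but follows a genuinely different route than the paper. The paper's argument is short and inductive: letting $m$ be the minimal gap length of $M$, it observes that the pattern $10^m$ occurs with positive probability (otherwise every gap equals $m$ and $M$ would be $m$-periodic), and the occurrence marker process of $10^m$ dilutes $M$ to minimal gap length $\ge m+1$; it then declares that increasing the minimal gap by one "suffices." You instead find, for the given target $m$, a single word of least period $\ge m$ appearing with positive probability, in one shot, by pitting Morse--Hedlund (the subshift $\supp(M)$ contains an aperiodic point, hence has at least $L+1$ words of each length $L$) against a crude count (fewer than $2^m$ words of any given length have least period $<m$); the gap bound for the resulting occurrence marker process then falls out of the usual overlap/period argument.

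The extra machinery in your proof is not idle: it sidesteps a genuine subtlety in the paper's induction. Re-applying the paper's step to the diluted process $M'$ requires $M'$ to still be aperiodic, and this can fail. Take $M$ to be $1$ at positions $\equiv 0 \pmod 3$, $0$ at positions $\equiv 1 \pmod 3$, and independent fair coins at positions $\equiv 2 \pmod 3$, mixed uniformly over the three phases to make it stationary. This $M$ is ergodic and aperiodic with minimal gap length $1$, yet the occurrence marker process of $10$ is the period-$3$ indicator $\1_{\{n\equiv 0\pmod 3\}}$, which is periodic, and the pattern $10^3$ occurs neither in it nor in $M$ itself; so the paper's inductive step cannot be repeated. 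Your one-shot construction only ever examines $M$, so it never needs a second pass and avoids this issue entirely. Two small remarks: the Baire-category detour is unnecessary, since aperiodicity says the set of periodic sequences is $\mu$-null while $\supp(M)$ has full $\mu$-measure, so it contains an aperiodic point directly; and the threshold $L\ge 2^m$ is slightly wasteful, but neither affects correctness.
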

\begin{proof}
Let $M$ be an ergodic aperiodic marker process. Let $m$ be the minimal gap length of $M$. It suffices to show that $M$ contains a finitarily diluted marker process with minimal gap length at least $m+1$. Since $M$ is aperiodic, the pattern $u=10^m$ consisting of a one followed by $m$ zeros occurs in $M$ with positive probability.
The occurrence marker process for $M$ corresponding to $u$ is a finitary dilution of $M$ with minimal gap length at least $m+1$.
\end{proof}

We will also require the following simple lemma about information of random variables.
Let $X$ be a random variable taking values in a countable set, and let $W$ be another random variable defined on the same probability space. Let $p_X$ denote the distribution function of $X$, i.e., $p_X(a) :=\Pr(X=a)$, and let $p_{X \mid W}(a) :=\Pr(X=a \mid W)$. Denote
\[\cI(X) = -\sum_{a \in \cA}\log p_X(a)\1_{[X=a]} \qquad\text{and}\qquad \cI(X \mid W) = -\sum_{a \in \cA}\log p_{X \mid W}(a)\1_{[X=a]} .\]

\begin{lemma}\label{lem:info-comparison-bound}
Let $X$ and $Y$ be discrete random variables. Then for any $t,s>0$,
\[ \Pr(\cI(Y) \le t) \le \Pr(X \neq Y) + \Pr(\cI(X) \le t+s) + e^{-s} .\]
\end{lemma}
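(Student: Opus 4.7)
The plan is to reduce the comparison of $\cI(Y)$ and $\cI(X)$ to a pointwise comparison of the mass functions $p_X$ and $p_Y$, using the ``bad set'' of values where $p_Y$ is much larger than $p_X$ to absorb the $e^{-s}$ term. First, I would introduce the set
\[ B := \{ a : p_Y(a) > e^s \cdot p_X(a) \}, \]
of atoms where $Y$ is much more concentrated than $X$, and observe that
\[ \Pr(X \in B) = \sum_{a \in B} p_X(a) \le e^{-s} \sum_{a \in B} p_Y(a) \le e^{-s}. \]

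Next, I would perform the three-way set-theoretic decomposition
\[ \{\cI(Y) \le t\} \subseteq \{X \ne Y\} \,\cup\, \{X = Y,\ X \in B\} \,\cup\, \{X = Y,\ X \notin B,\ \cI(Y) \le t\}, \]
and bound the first two events by $\Pr(X \ne Y)$ and $\Pr(X \in B) \le e^{-s}$ respectively. On the third event, the condition $X = Y$ replaces $p_Y(Y)$ by $p_Y(X)$ in $\cI(Y) \le t$, giving $p_Y(X) \ge e^{-t}$. The condition $X \notin B$ then yields $p_X(X) \ge e^{-s} p_Y(X) \ge e^{-s-t}$, i.e. $\cI(X) \le t+s$. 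Hence the third event is contained in $\{\cI(X) \le t+s\}$.

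Summing the three contributions gives the desired inequality. I do not anticipate any real obstacle here: the statement is essentially a standard ``slack'' comparison between self-information functions, and the only choice to make is the cut-off level $e^s$ used to define $B$, which is dictated by the form of the desired bound.
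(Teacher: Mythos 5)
Your proof is correct, and it takes a slightly different (and arguably cleaner) route than the paper's. Both arguments begin the same way: discard the event $\{X \neq Y\}$ and then, on $\{X=Y\}$, compare $\cI(Y)$ to $\cI(X)$. The difference lies in how the residual term is shown to be at most $e^{-s}$. The paper considers the event $\{-\log p_X(X) > t+s,\ -\log p_Y(X) \le t\}$ and bounds it by a counting argument: there are at most $e^t$ values $x$ with $p_Y(x) \ge e^{-t}$, and each such $x$ has $p_X(x) < e^{-(t+s)}$, so the union bound gives $e^t\cdot e^{-(t+s)} = e^{-s}$. You instead introduce the $t$-independent ``bad set'' $B=\{a: p_Y(a) > e^s p_X(a)\}$ and bound $\Pr(X\in B)$ by a change-of-measure (likelihood-ratio) computation, $\sum_{a\in B}p_X(a)\le e^{-s}\sum_{a\in B}p_Y(a)\le e^{-s}$. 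Your set $B$ contains the paper's bad set, but the bound you derive on it is exactly the required $e^{-s}$, so nothing is lost. What your version buys is that the bad set and its probability bound do not depend on $t$, which makes the structure of the estimate more transparent; what the paper's version buys is that it is phrased entirely in terms of the information functionals $-\log p_X$ and $-\log p_Y$, avoiding the introduction of an auxiliary set. Both proofs are short and elementary, and either would serve.
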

\begin{proof}
We have
\[ \Pr(\cI(Y) \le t) \le \Pr(X \neq Y) + \Pr(-\log p_Y(X) \le t) \]
and
\[ \Pr(-\log p_Y(X) \le t) \le \Pr(-\log p_X(X) \le t+s) + \Pr(-\log p_X(X) > t+s, -\log p_Y(X) \le t) .\]
Since there are at most $e^t$ possible values $x$ having $-\log p_Y(x) \le t$, a union bound shows that the last term above is at most $e^{-(t+s)}e^t=e^{-s}$.
\end{proof}

Let $W$ be an ergodic aperiodic process and suppose that $M$ is a marker process which is a factor of $W$.
We say that a process $X$ is \textbf{$M$-block-dependent relative to $W$} if $X_{I^M_0}$ and $X_{\Z \setminus I^M_0}$ are conditionally independent given $W$. Equivalently,  $X_{I^{M}_{n_1}},\ldots,X_{I^{M}_{n_k}}$ are jointly conditionally independent given $W$, whenever $n_1,\ldots,n_k$ are random integers, measurable with respect to $W$, such that $I^M_{n_1},\dots, I^M_{n_k}$ are almost surely all distinct.
We say that $X$ is \textbf{finitarily $M$-block-dependent relative to $W$} if it is $M$-block-dependent relative to $W$ and $(\cL(X_{I^M_n} \mid W))_{n \in \Z}$ is a (stop-)finitary factor of $W$.

The relevance of finitary $M$-block-dependence will be clear in the proof of \cref{thm:relative_smorodinsky_k_dependent}.
On the one hand, it is a strengthening of finitary dependence: if $X$ is finitarily $M$-block-dependent relative to $W$, then it is also finitarily $K$-dependent for some process $K$ (with $K$ being a finitary factor of $W$ when $M$ is), but the converse is not necessarily true. On the other hand, it turns out that any process which is finitarily dependent relative to an aperiodic $W$ can be ``well approximated'' in a certain sense by a partial process which is finitarily $M$-block-dependent for some marker process $M$ which is a finitary factor of $W$.

The following is a consequence of the relative Shannon--McMillan--Breiman theorem when applied to $M$-block-dependent processes.

\begin{lemma}\label{lem:Shannon-McMillan-bidirectional}
	Let $V$ be a process having a marker process $M$  as factor, and let $(X,Y)$ be a process which is $M$-block-dependent relative to $V$ and satisfying that $h(X,Y \mid V)<\infty$. 
	Then, almost surely,
	\[  \cI(X_{[a,c]},Y_{[c,b]}\mid V) = (c-a)h(X \mid V) + (b-c)h(Y \mid V) + o(b-a)\]
	as $b-a \to \infty$, uniformly over $a,b,c$ satisfying that $a<0<b$ and $a<c<b$.

\end{lemma}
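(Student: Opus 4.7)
The plan is to use the $M$-block-dependence relative to $V$ to split the joint information along block boundaries, so that the cross-interaction between $X_{[a,c]}$ and $Y_{[c,b]}$ is confined to the single block containing $c$, and then to apply the relative Shannon--McMillan--Breiman theorem to $X$ alone (relative to $V$) and to $Y$ alone (relative to $V$) in order to identify the main asymptotic terms.

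First I would carry out a block decomposition. Since $M$ is a factor of $V$, conditioning on $V$ determines the partition $\{I^M_n\}_{n\in\Z}$. By $M$-block-dependence relative to $V$, the family $\{(X_{I^M_n},Y_{I^M_n})\}_n$ is conditionally mutually independent given $V$. Letting $I_c := I^M_c$, every block $I\ne I_c$ that meets $[a,b]$ contributes to the observation $(X_{[a,c]},Y_{[c,b]})$ only through either a piece of $X$ (if $I$ meets $[a,c]$) or a piece of $Y$ (if $I$ meets $[c,b]$), but not both; the unique block on which both are observed is $I_c$. Conditional independence across blocks therefore yields the exact identity
\[
  \cI(X_{[a,c]},Y_{[c,b]} \mid V) = \cI(X_{[a,c]} \mid V) + \cI(Y_{[c,b]} \mid V) + E_c,
\]
where
\[
  E_c := \cI(X_{I_c\cap[a,c]},Y_{I_c\cap[c,b]} \mid V) - \cI(X_{I_c\cap[a,c]} \mid V) - \cI(Y_{I_c\cap[c,b]} \mid V).
\]

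The second step is to apply the relative Shannon--McMillan--Breiman theorem to $X$ relative to $V$ (and separately to $Y$) to conclude that, almost surely,
\[
  \cI(X_{[a,c]} \mid V) = (c-a)\,h(X\mid V) + o(b-a), \qquad \cI(Y_{[c,b]} \mid V) = (b-c)\,h(Y\mid V) + o(b-a),
\]
uniformly over $a<0<b$ and $a<c<b$ as $b-a\to\infty$. The uniformity in the position of $0$ inside $[a,b]$ is obtained by splitting each interval at $0$, invoking the one-sided SMB separately on the past and future pieces, and using stationarity together with the monotonicity $\cI(X_A\mid V)\le \cI(X_B\mid V)$ for $A\subset B$.

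The main obstacle will be the uniform control of the boundary error $E_c$. Using monotonicity we get $|E_c|\le 3\,\cI(X_{I_c},Y_{I_c}\mid V)$, so it suffices to show that
\[
  \max\bigl\{\cI(X_I,Y_I\mid V) : I=I^M_n\text{ with }I\cap[a,b]\ne\emptyset\bigr\} = o(b-a) \qquad\text{almost surely.}
\]
I would establish this via two ingredients. \emph{(i)} Since $M$ has positive density, $\E|I^M_0|<\infty$; a standard Borel--Cantelli argument combined with the ergodic theorem for the stationary sequence of block lengths shows that the maximum block length meeting $[a,b]$ is $o(b-a)$ almost surely. \emph{(ii)} For a block of length $\ell$, relative SMB applied to the pair $(X,Y)$ relative to $V$ yields $\cI(X_{[k,k+\ell)},Y_{[k,k+\ell)}\mid V)\le (h(X,Y\mid V)+o(1))\ell$ as $\ell\to\infty$; uniformizing this bound over the random position $k$ of the block---via a maximal-function style argument exploiting stationarity---gives $\cI(X_I,Y_I\mid V)=o(b-a)$ uniformly over the blocks $I$ meeting $[a,b]$. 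The technical heart of the argument is this passage from SMB on a single enlarging interval to a uniform bound over all blocks in $[a,b]$, but once it is done, substituting the estimates into the block-decomposition identity gives the desired asymptotic.
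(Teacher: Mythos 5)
Your block decomposition and the reduction to the single-block error term is exactly the paper's first step, and your identification of the cross-interaction being confined to the block $I_c$ containing $c$ is correct. The gap is in bounding that error term, which you yourself flag as "the technical heart."

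Your proposal splits this into (i) a maximum-block-length bound and (ii) a "uniformization of relative SMB over the random position $k$ via a maximal-function style argument." Step (ii) as stated is problematic: the relative SMB theorem gives almost-sure convergence of $\cI((X,Y)_{[0,\ell)}\mid V)/\ell$ as $\ell\to\infty$ for the block anchored at $0$, but it does not say anything about blocks at positions chosen adversarially (here, the marker positions inside $[a,b]$, which are random and determined by $V$). A "maximal-function" argument in the sense of Wiener's inequality controls ergodic \emph{averages} in $L^1$, not pointwise maxima of information contents over windows, so it is far from clear that this route closes. In effect you would end up re-deriving the estimate you actually need, rather than importing it from SMB.

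The paper avoids this entirely with a cleaner observation: set $\tilde Z_n := \cI\bigl((X,Y)_{I^M_n}\mid V\bigr)\,M_n$, i.e.\ the information of the block \emph{starting} at $n$, kept only when $n$ is a marker. This is a stationary sequence, and by Kac's lemma
\[
\E[\tilde Z_0] = H\bigl(X_0,Y_0 \mid V,\,(X,Y)_{[0^-,0)}\bigr) \le H(X_0,Y_0\mid V) < \infty,
\]
so the pointwise ergodic theorem gives $\tilde Z_n/|n|\to 0$ a.s. Since $Z_c = \tilde Z_{c^-}$ and $c - c^- = o(|c|)$ a.s.\ (as the gaps to the previous marker grow sublinearly), this yields the uniform $o(b-a)$ bound on the single-block error directly. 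No uniformization of SMB and no maximal inequality are needed. I recommend replacing your step (ii) with this Kac-lemma/ergodic-theorem argument; your step (i) then becomes superfluous, since the sublinearity of $c-c^-$ falls out of the same reasoning.
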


It seems plausible that the  conclusion of \cref{lem:Shannon-McMillan-bidirectional} holds even without the $M$-block-dependence assumption, but we do not currently know if this is indeed the case.

\begin{proof}
	Note first that because $(X,Y)$ is $M$-block-dependent relative to $V$, the assumption that $h(X,Y \mid V) < \infty$ implies that 
	$H(X_0,Y_0 \mid V) < \infty$ and also that $h(X \mid V) ,h(Y \mid V) < \infty$.
	
	We now show that, almost surely,
	\begin{equation}\label{eq:SMB_X_Y_sup_c}
		\max_{c \in [a,b]}\left| \cI(X_{[a,c]},Y_{[c,b]} \mid V) - \left(\cI(X_{[a,c]} \mid V)+\cI(Y_{[c,b]} \mid V)\right)\right| = o(b-a),
	\end{equation}
	as $b-a \to \infty$ with $a<0<b$.
	Indeed, if we denote $I^M_c = [c^-,c^+)$, i.e., $c^+$ is the first occurrence of $M$ in $(c,\infty)$ and $c^-$ is the last occurrence of $M$ in $(-\infty,c]$, then the fact that $(X,Y)$ is $M$-block-dependent relative to $V$ implies that for $a<c^-$ and $b>c^+$,
	\begin{align*}
	 &\cI(X_{[a,c]},Y_{[a,b]} \mid V) - \left(\cI(X_{[a,c]} \mid V)+\cI(Y_{[c,b]} \mid V)\right) \\&\qquad\qquad\qquad\qquad= \cI(X_{[c^-,c]},Y_{[c,c^+]} \mid V) - \left( \cI(X_{[c^-,c]} \mid V) + \cI(Y_{[c,c^+] } \mid V) \right) .
    \end{align*}
The right-hand side is bounded above in absolute value by $Z_c := \cI((X,Y)_{[c^-,c^+]} \mid V)$. Thus, \eqref{eq:SMB_X_Y_sup_c} will follow by showing that $Z_n = o(|n|)$ as $|n|\to \infty$ almost surely.
	Define $\tilde Z_n := Z_n M_n$, and note that $Z_n = \tilde Z_{n^-}$. Also, $n-n^-=o(|n|)$ almost surely. Thus, it suffices to show that $\tilde Z_n = o(|n|)$ almost surely.
	Using Kac's lemma (in the second equality below),
	\begin{align*}
	 \E \tilde Z_0 = \E[Z_0 M_0] &= \E[\cI(X_0,Y_0 \mid V, (X,Y)_{[0^-,0)})] \\&= H(X_0,Y_0 \mid V, (X,Y)_{[0^-,0)}) \le H(X_0,Y_0 \mid V) < \infty .
	\end{align*}
 It follows from the pointwise ergodic theorem that $\lim_{|n| \to \infty} \frac{\tilde Z_n}{|n|}=0$ almost surely. This proves~\eqref{eq:SMB_X_Y_sup_c}.


	Similarly (or simply by applying~\eqref{eq:SMB_X_Y_sup_c} with $(X,X)$ in place of $(X,Y)$),
	\begin{equation}\label{eq:SMB_X_sup}
		\left| \cI(X_{[i,j]} \mid V) - \left(\cI(X_{[i,0]} \mid V)+\cI(X_{[0,j]} \mid V)\right)\right| = o(j-i)  \qquad\text{as }j-i \to \infty\text{ with }i<0<j.
	\end{equation}
	In fact, the left-hand side is almost surely bounded over $i<0<j$, but we will not need this.
	By the relative Shannon--McMillan--Breiman theorem, almost surely,
	\begin{align*}
		\cI(X_{[0,j]} \mid V) &= jh(X \mid V) + o(j) &&\text{as }j \to \infty,\\
		\cI(X_{[i,0]} \mid V) &= |i|h(X \mid V) + o(|i|) &&\text{as }i \to -\infty.
	\end{align*}
	Thus, together with~\eqref{eq:SMB_X_sup}, we obtain that, almost surely,
	\[ \cI(X_{[i,j]} \mid V)  = (j-i)h(X \mid V) + o(j-i) \qquad\text{as }j-i \to \infty\text{ with }i<0<j.\]
	It follows from this that, almost surely,
	\begin{equation}\label{eq:SMB_X_a_b}
		\cI(X_{[a,c]} \mid V) = (c-a)h(X \mid V)+ o(|a|+|c|) \qquad\text{as }|a|+|c| \to \infty \text{ with }a<c .
	\end{equation}
	Similarly, almost surely,
	\begin{equation}\label{eq:SMB_Y_a_c}
		\cI(Y_{[c,b]} \mid V) = (b-c)h(Y \mid V)+ o(|b|+|c|) \qquad\text{as }|b|+|c| \to \infty \text{ with }c<b .
	\end{equation}
	Combining \eqref{eq:SMB_X_Y_sup_c} with \eqref{eq:SMB_X_a_b} and \eqref{eq:SMB_Y_a_c} yields the lemma.
\end{proof}

 Let $X$ and $\tilde X$ be processes taking values in $\cA$ and $\tilde\cA$, respectively, which are both discrete countable sets, and let $W$ be a process taking values in an arbitrary Polish space $\cB$.
A \textbf{partial isomorphism} of $X$ and $\tilde X$ relative to $W$  is a pair $(\pi,\tilde \pi)$ such that:
\begin{itemize}
 	\item $\pi: \cA^\Z \times \cB^\Z \to (\tilde\cA \cup \{\star\})^\Z$ is an equivariant map.
 	\item $\tilde\pi: \tilde\cA^\Z \times \cB^\Z \to (\cA \cup \{\star\})^\Z$ is an equivariant map.
 	\item There is a joining of $(X,W)$ and $(\tilde X,W)$ relatively to $W$ so that $\pi(X,W)$ is a partial process of $\tilde X$ and $\tilde \pi(\tilde X,W)$ is a partial process of $X$.
\end{itemize}
We say that $(\pi,\tilde \pi)$ is \textbf{finitary} if both $\pi$ and $\tilde \pi$ are finitary maps (with respect to the distributions of $(X,W)$ and $(\tilde X,W)$, respectively). We say that $(\pi',\tilde \pi')$ \textbf{extends} $(\pi,\tilde \pi)$ if 
	$\pi(X,W) \preceq \pi'(X,W)$ and $\tilde \pi(\tilde X,W) \preceq \tilde \pi'(\tilde X,W)$ almost surely. Recall that the uncertainty of a partial process $Y$ is $\Pr(Y=\star)$.
	
	The following lemma shows that an ``extending'' sequence of finitary partial (relative) isomorphisms  with vanishing uncertainties witnesses the existence of a (relative) finitary isomorphism:
		
	\begin{lemma}\label{lem:limit_of_partial_isomorphisms}
		Let $(X,W)$ and $(\tilde{X},W)$ be two ergodic joinings.
		Suppose there exists a sequence $(\pi_i,\tilde\pi_i)_{i=1}^\infty$ of finitary partial isomorphisms of $X$ and $\tilde{X}$ relative to $W$ which extend one another, such that the uncertainties of $\pi_i(X,W)$ and $\tilde \pi_i(\tilde X,W)$ both tend to $0$ as $i \to \infty$. Then $X$ and $\tilde{X}$ are finitarily isomorphic relatively to $W$.
	\end{lemma}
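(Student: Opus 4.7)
The plan is to take pointwise limits of the partial maps $\pi_i, \tilde \pi_i$ and to verify that these limits assemble into a finitary isomorphism of $X$ and $\tilde X$ relative to $W$. First I will use the extension property together with the vanishing of uncertainties to argue that the coordinate-wise limits
\[
\pi(X,W) := \lim_{i \to \infty} \pi_i(X,W), \qquad \tilde \pi(\tilde X, W) := \lim_{i \to \infty} \tilde \pi_i(\tilde X, W)
\]
exist almost surely and take no $\star$ values: each coordinate freezes permanently once first assigned a non-$\star$ value, and since $\Pr(\pi_i(X,W)_n = \star) \to 0$ for every $n$, this freezing occurs at an almost surely finite stage.

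Next I will argue that $\pi$ is stop-finitary (the argument for $\tilde \pi$ is identical). Fix a coordinate $n$ and let $I := \min\{i \ge 1 : \pi_i(X,W)_n \ne \star\}$, which is almost surely finite. Each $\pi_i$ being stop-finitary provides an almost surely finite stopping time $\tau_i$ such that $\pi_i(X,W)_n$ is $\cF_{\tau_i}$-measurable. Setting $\tau := \tau_I$, I plan to verify that $\tau$ is itself an almost surely finite stopping time (using that $\{I = i\}$ is $\cF_{\max_{j \le i} \tau_j}$-measurable, so that $\{I = i,\, \tau_i \le m\} \in \cF_m$) and that $\pi(X,W)_n = \pi_I(X,W)_n$ is $\cF_\tau$-measurable.

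I will then record the marginal identity $(\pi(X,W), W) \stackrel{d}{=} (\tilde X, W)$ (and symmetrically for $\tilde \pi$). The short argument: for any finite $F \subset \Z$ and any cylinder event $B$, under the joining $\mu_i$ associated to $(\pi_i, \tilde \pi_i)$ we have $\pi_i(X,W)_F = \tilde X_F$ on the event that $\pi_i(X,W)_F$ has no $\star$, whose probability tends to $1$; passing to the limit gives the identity.

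The hard part will be to show that $\pi$ and $\tilde \pi$ are mutual inverses, i.e.\ that $\tilde \pi(\pi(X,W), W) = X$ almost surely under the marginal of $(X,W)$ (the symmetric identity being analogous). The difficulty is that $\pi$ and $\tilde \pi$ are limits of maps coupled by a priori incompatible joinings $\mu_i$, while the target identity must hold under the marginal of $(X,W)$. My plan is a two-way comparison of $\tilde \pi_i(\pi_j(X,W), W)_n$ for $j \gg i$: on one hand, by stop-finitaryness of $\tilde \pi_i$, once $\pi_j(X,W)$ agrees with $\pi(X,W)$ on the (random, finite) window that $\tilde \pi_i$ reads, one has $\tilde \pi_i(\pi_j(X,W), W)_n = \tilde \pi_i(\pi(X,W), W)_n$; on the other hand, under $\mu_j$ with $j \ge i$ we have $\pi_j(X,W) \preceq \tilde X$, so with probability tending to $1$ as $j \to \infty$ the same input agrees with $\tilde X$ on that window, giving $\tilde \pi_i(\pi_j(X,W), W)_n = \tilde \pi_i(\tilde X, W)_n \in \{\star, X_n\}$. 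Since the event $\{\tilde \pi_i(\pi(X,W), W)_n \in \{\star, X_n\}\}$ depends only on $(X,W)$, whose marginal is fixed across all joinings, its $\mu_j$-probability tends to $1$ and is independent of $j$, so it equals $1$. Letting $i \to \infty$ and using that $\tilde \pi(\tilde X, W)$ has no $\star$ (which transfers to $\tilde \pi(\pi(X,W), W)$ via the distributional identity of the preceding paragraph), I will conclude $\tilde \pi(\pi(X,W), W) = X$ almost surely. Defining the joining $(X, \tilde X, W) := (X, \pi(X,W), W)$ then makes $(\pi, \tilde \pi)$ into the desired finitary isomorphism relative to $W$.
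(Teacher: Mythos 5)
Your proof follows the same route as the paper: define $\pi,\tilde\pi$ as pointwise limits of $\pi_i,\tilde\pi_i$, use the vanishing uncertainties to conclude the limits are almost surely $\star$-free, and then verify finitaryness and mutual invertibility; you additionally spell out the verification of $\tilde\pi(\pi(X,W),W)=X$ that the paper leaves to ``one may check'', and your observation that this requires care because the joinings accompanying the successive $(\pi_i,\tilde\pi_i)$ need not cohere (so one must argue via the $(X,W)$-measurability of the target event, whose probability is then joining-independent) is exactly the right point. One small repair in the finitaryness step: $\tau:=\tau_I$ need not be a stopping time as written, since $\{I=i\}$ is $\cF_{\max_{j\le i}\tau_j}$-measurable rather than $\cF_{\tau_i}$-measurable; take $\tau:=\max_{j\le I}\tau_j$ (equivalently, replace each $\tau_i$ by $\max_{j\le i}\tau_j$ at the outset) and the argument closes cleanly.
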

	\begin{proof}
		Define $\pi: \cA^\Z \times \cB^\Z \to (\tilde \cA \cup \{\star\})^\Z$ by
		\[ \pi(x,w)_n := \lim_{i \to \infty} \pi_i(x,w)_n = \begin{cases} \pi_{I(x,w)_n}(x,w) &\text{if }I(x,w)_n<\infty \\ \star &\text{otherwise} \end{cases} ,\]
		where $I(x,w)_n :=  \min \{ i : \pi_i(x,w)_n \ne \star\}$. It is straightforward to check that $\pi(X,W)$ is a partial process of $\tilde{X}$. Moreover, since $\Pr( I(X,W)_n \ge i) \to 0$, it follows that, almost surely, $I(X,W)_n < \infty$ and hence $\pi(X,W) \in \tilde \cA^\Z$.
		Similarly, one defines $\tilde\pi: \tilde\cA^\Z \times \cB^\Z \to (\cA \cup \{\star\})^\Z$ and has that $\tilde{\pi}(\tilde X,W) \in \cA^\Z$ almost surely. In addition, one may check that $\tilde\pi(\pi(X,W),W) = X$ and $\pi(\tilde\pi(\tilde X,W),W) = \tilde X$ almost surely, so that $\pi$ is an isomorphism from $X$ to $\tilde{X}$ relative to $W$, and $\tilde\pi$ is its inverse. Finally, since each $\pi_i$ is a finitary map, so is $I$, and it follows that $\pi$ is a finitary map, and similarly for $\tilde{\pi}$.
	\end{proof}

We now introduce a number of ad-hoc definitions that are intended to give some structure to the notion of finitary partial isomorphism beyond the basic properties above, which will be useful for our purposes.
Let $W$ be an ergodic aperiodic process, let $K$ be a $\N$-valued process which is a finitary factor of $W$, let $M$ be a marker process which is a finitary factor of $W$, and let $X$ and $\tilde X$ be two processes which are finitarily $K$-dependent relative to $W$. All partial isomorphisms below are partial isomorphisms of $X$ and $\tilde X$ relative to $W$, and we do not explicitly write this.

We say that an equivariant map $\pi\colon \cA^\Z \times \cB^\Z \to \cC^\Z$ is an \textbf{$M$-block code (relative to $W$)} if $\pi(X,W)_n$ depends on $X$ only through $X_{I^M_n}$ almost surely. We say that an $M$-block code $\pi$ is \textbf{finitary} if its dependence on $W$ is finitary. More precisely, if the factor map $W \mapsto (f_n)_n$ is finitary, where $f_n$ is the element of $(\cC^I)^{\cA^I}$ defined by $I:=I^M_n$ and $f_n(x) := \pi(x,W)_{I^M_n}$ (which is almost surely well defined since $\pi$ is an $M$-block code).
Note that finitary $M$-block codes preserve $M$-block-dependence and that compositions of finitary $M$-block codes are again finitary $M$-block codes.
A process is a \textbf{finitary $M$-block factor of $X$ (relative to $W$)} if it is the image of $(X,W)$ under a finitary $M$-block code, and we denote this by $X \hookrightarrow_M X'$ (omitting $W$ from the notation).

A partial isomorphism $(\pi,\tilde\pi)$ is \textbf{$(M,X',\tilde X')$-adapted} (see \cref{fig:partial-iso}) if
\begin{itemize}
	\item $X \hookrightarrow_M X' \hookrightarrow_M \pi(X,W)$ and $X' \preceq X$.
	\item $\tilde X \hookrightarrow_M \tilde X' \hookrightarrow_M \tilde\pi(\tilde X,W)$ and $\tilde X' \preceq \tilde X$.
	\item There is a joining of $(X,W)$ and $(\tilde X,W)$ relatively to $W$ such that $\pi(X,W) \preceq \tilde X'$ and $\tilde \pi(\tilde X,W) \preceq X'$.
\end{itemize}
This notion will allow to keep track and control on the partial isomorphisms we construct through the processes $(M,X',\tilde X')$ and the following notion of complexity.
The \textbf{$M$-complexity (relative to $W$)} of an $M$-block-dependent process $Y$ is at most $\kappa \ge 0$ if almost surely, given $W$, there are at most $e^{\kappa |I^M_0|}$ values that the random variable $Y_{I^M_0}$ can take. 


\begin{figure}
 \includegraphics[scale=1]{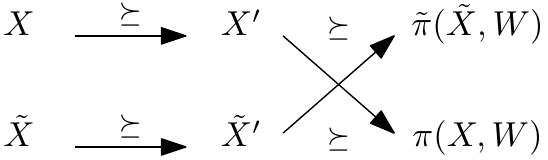}
 \caption{An illustration of the relations in an $(M,X',\tilde X')$-adapted partial isomorphism. The arrows represent finitary $M$-block factors.}
 \label{fig:partial-iso}
\end{figure}

	

The proof of \cref{thm:relative_smorodinsky_k_dependent} via \cref{lem:limit_of_partial_isomorphisms} proceeds by the following ``improvement step'', whose essence is due to  Keane and Smorodinsky \cite{keane1977class}. The basic idea was subsequently used by Keane and Smorodinsky to prove finitary isomorphism of equal-entropy Bernoulli schemes \cite{keane1979bernoulli} and Markov chains \cite{keane1979finitary}. Roughly speaking, the lemma says two things: First, that we can reduce the uncertainty of $\pi$ to be almost as low as the imprecision of $\tilde\pi$. Second, we can reduce the imprecision of $\pi$ arbitrarily, at the expense of increasing its complexity. This will allow us to iterate this back and forth in order to eventually arbitrarily decrease the uncertainties of both $\pi$ and $\tilde\pi$.


\begin{lemma}[partial isomorphism improvement step]\label{lem:improve_partial_isomorphism}
	Let $X$ and $\tilde{X}$ be two processes which are finitarily $M$-block-dependent relative to $W$, where $M$ is a finitary factor of $W$.
	Suppose that $h(X \mid W)<\infty$ and that $\tilde{X}$ has $M$-complexity strictly less than $h(X \mid W)$.
	Let $(\pi,\tilde \pi)$ be an $(M,X',\tilde X)$-adapted partial isomorphism of $X$ and $\tilde X$ relative to $W$, where $X'$ has $M$-complexity strictly less than $h(X \mid W)$.
	Then for any $\epsilon>0$ there exists a  finitary dilution $M'$ of $M$,  a partial process $X''$ of  $M'$-complexity strictly less than $h(X \mid W)$ and uncertainty at most $\epsilon$ such that $X' \preceq X'' \preceq X$ and an $(M',X'',\tilde X)$-adapted partial isomorphism $(\pi',\tilde \pi)$ of $X$ and $\tilde X$ relative to $W$ that extends $(\pi,\tilde\pi)$ such that $\pi'(X,W)$ has uncertainty at most $\epsilon$.
\end{lemma}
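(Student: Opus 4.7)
My plan is to apply a conditional, blockwise version of the Keane--Smorodinsky marriage lemma (\cref{lem:coupling_reduction}) along a sufficiently diluted marker process. Fix $\kappa < h(X\mid W)$ which exceeds the $M$-complexities of both $X'$ and $\tilde X$, and choose $\eta>0$ with $\kappa+3\eta < h(X\mid W)$. By \cref{lem:low-density-markers}, $M$ admits finitary dilutions $M'$ of arbitrarily large minimal gap length. Because $M'\le M$, every $M'$-block $I^{M'}_0$ is a disjoint union of $M$-blocks, so $X$, $\tilde X$, $X'$, and the existing outputs $\pi(X,W)$, $\tilde\pi(\tilde X,W)$ all remain finitarily $M'$-block-dependent relative to $W$; in particular $\tilde\pi$ is still a finitary $M'$-block code and the $M'$-complexities of $X'$ and $\tilde X$ remain at most $\kappa$.

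Apply the relative Shannon--McMillan--Breiman theorem (\cref{lem:Shannon-McMillan-bidirectional}) to $X$ on the $M'$-block $I^{M'}_0$, together with the comparison bound \cref{lem:info-comparison-bound}. Once the minimal gap length of $M'$ is large enough, a ``good'' event $G$ of probability at least $1-\epsilon/3$ holds on which, for every $x$ in the conditional support of $X_{I^{M'}_0}$ given $W$,
\[
 \Pr\bigl(X_{I^{M'}_0}=x \,\big|\, W\bigr) \;\le\; e^{-(h(X\mid W)-\eta)\,|I^{M'}_0|},
\]
while the conditional support of $\tilde X_{I^{M'}_0}$ given $W$ has size at most $e^{(\kappa+\eta)\,|I^{M'}_0|}$. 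Writing $\lambda$ for the conditional joint law of $(X_{I^{M'}_0},\tilde X_{I^{M'}_0})$ given $W$ supplied by the adapted partial isomorphism, I run \cref{lem:coupling_reduction} with $U$ chosen to be the $\tilde X$-support, producing an absolutely continuous modification $\tilde\lambda$ with at most $|U|-1 \le e^{(\kappa+\eta)|I^{M'}_0|}$ non-committed $X$-values. On $G$, their total conditional mass is at most $e^{(\kappa+\eta)|I^{M'}_0|}\cdot e^{-(h(X\mid W)-\eta)|I^{M'}_0|}\le e^{-\eta |I^{M'}_0|}$, which falls below $\epsilon/3$ once $M'$ is diluted enough. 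To preserve compatibility with $\pi$, I apply the marriage procedure only to the sub-coupling of $\lambda$ not already committed by $\pi$, leaving $\pi$'s existing pairings untouched.

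I then define, blockwise: $\pi'(X,W)_{I^{M'}_0} := \tilde X_{I^{M'}_0}$ when $X_{I^{M'}_0}$ is $\tilde\lambda$-committed on $G$ (and $\star$ elsewhere), and $X''_{I^{M'}_0} := X_{I^{M'}_0}$ on that same event, with $X''_{I^{M'}_0} := X'_{I^{M'}_0}$ otherwise so that $X'\preceq X''$. The marriage procedure of \cref{lem:coupling_reduction} is a deterministic function of $\lambda$ and its marginals, all of which are finitary functions of $W$ by the finitary $M'$-block-dependence hypothesis; hence both $\pi'$ and the code producing $X''$ are finitary $M'$-block codes. The uncertainties of $\pi'(X,W)$ and $X''$ are each at most $\epsilon$, and $X''$ has $M'$-complexity at most $\kappa+\eta < h(X\mid W)$ since within each block it takes at most $e^{(\kappa+\eta)|I^{M'}_0|}+1$ conditional values. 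The adaptation relations $X\hookrightarrow_{M'}X''\hookrightarrow_{M'}\pi'(X,W)$, $\tilde X\hookrightarrow_{M'}\tilde X\hookrightarrow_{M'}\tilde\pi(\tilde X,W)$ and the joining constraints $\pi'(X,W)\preceq\tilde X$, $\tilde\pi(\tilde X,W)\preceq X'\preceq X''$ are immediate from the construction, and the extension $\pi(X,W)\preceq\pi'(X,W)$ follows from the ``leave $\pi$ alone'' clause.

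The main obstacle I anticipate is formulating the blockwise conditional marriage step as a genuinely $\Gamma$-equivariant and finitary operation: \cref{lem:coupling_reduction} is stated for a single finite coupling, so one must verify that its deterministic output, applied pointwise in $W$ and blockwise along $M'$, depends finitarily on $W$. This reduces to the hypothesis that the conditional laws $\cL(X_{I^{M'}_0}\mid W)$, $\cL(\tilde X_{I^{M'}_0}\mid W)$ and their joint version are finitary $M'$-block functions of $W$. A secondary technicality is simultaneously securing the uncertainty bound $\le\epsilon$ and the strict $M'$-complexity bound $<h(X\mid W)$ for $X''$, both of which follow from the quantitative SMB control in \cref{lem:Shannon-McMillan-bidirectional} once $\eta$ is chosen small relative to the slack $h(X\mid W)-\kappa$.
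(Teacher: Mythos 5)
Your plan correctly identifies the two main tools (the marriage lemma applied block\-wise along a diluted marker, and a relative Shannon--McMillan--Breiman estimate), but there is a genuine gap in the construction of $X''$ that makes the complexity claim fail.

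You define $X''_{I^{M'}_0}$ to equal $X_{I^{M'}_0}$ on the event that $X_{I^{M'}_0}$ is $\tilde\lambda$-committed, and equal to $X'_{I^{M'}_0}$ otherwise. The number of \emph{non-}committed values of $X_{I^{M'}_0}$ is controlled by \cref{lem:coupling_reduction} to be at most $|U_W|-1$, but nothing bounds the number of \emph{committed} values: a single element of $U_W$ can be the unique partner of arbitrarily many $v\in V_W$. (Take $U=\{u\}$ and $V$ of size $e^{h|I|}$ with $\lambda$ concentrated on $\{u\}\times V$; every $v$ is committed.) Thus on a positive-probability event $X''$ coincides with $X$ over a set of values whose cardinality can be close to $e^{h(X\mid W)|I^{M'}_0|}$, so the claimed $M'$-complexity bound $\kappa+\eta < h(X\mid W)$ does not follow, and the strict inequality required in the conclusion (which is essential to make the ping--pong iteration in the proof of \cref{thm:relative_smorodinsky_k_dependent} run) is lost. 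The paper's proof avoids this by decoupling $X''$ from the marriage step entirely: $X''$ is defined purely by an information threshold ($X''_n$ equals a surrogate $X'''_n$ when $\cI(X'''_{I^{M'}_n}\mid W)\le |I^{M'}_n|\kappa'-\ln 2$, otherwise $X'_n$), where the auxiliary process $X'''$ mixes $X$ on a large ``left'' subinterval with $X'$ on a small ``right'' subinterval of each $M'$-block. The threshold directly caps the number of admissible values of $X''_{I^{M'}_n}$ by $\tfrac12 e^{\kappa'|I^{M'}_n|}$, giving the complexity bound by counting, and only \emph{after} $X''$ is fixed is the marriage lemma applied to the coupling of $X''_{I^{M'}_0}$ and $\tilde X_{I^{M'}_0}$ to produce $\pi'$.

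A secondary problem is your claim that on a good event $G$ of probability $\ge 1-\epsilon/3$, \emph{every} $x$ in the conditional support of $X_{I^{M'}_0}$ given $W$ has conditional mass at most $e^{-(h(X\mid W)-\eta)|I^{M'}_0|}$. The relative SMB theorem controls the information of the \emph{realized} value, not all atoms of the conditional law; there may be atypical atoms with much larger mass. The paper's estimate of the uncertainty of $\pi'$ is the correct version of this idea: it bounds $\Pr(\cI(X''_{I^{M'}_0}\mid W)\le |I^{M'}_0|h_1)$ (a statement about the realization, reached via \cref{lem:Shannon-McMillan-bidirectional} applied to $X'''$ and \cref{lem:info-comparison-bound} to pass from $X'''$ to $X''$) and, separately, uses $e^{-h_1|I^{M'}_0|}|U_W|$ as a union bound over the at most $|U_W|-1$ non-committed values.
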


\begin{proof} 
 Denote $h := h(X \mid W)$ and $h' := h(X' \mid W)$.
	Let $\kappa<h$ be such that $X'$ and $\tilde X$ have $M$-complexity at most $\kappa$.
The fact that $X'$ has complexity at most $\kappa$ implies that $h' \le \kappa$. Thus, we can choose $\kappa'$ and $\eta \in (0,\frac\epsilon4)$ so that
	\[ \kappa < (1-\eta)h + \eta h' < \kappa' < h .\]
	
		Given a sufficiently sparse finitary dilution $M'$ of $M$, we aim to construct a process $X''$, and a partial isomorphism $(\pi',\tilde\pi)$ that extends $(\pi,\tilde\pi)$ (note that $\tilde\pi$ remains unchanged). The process $X''$ will be chosen so that $X' \preceq X'' \preceq X$ and so that it has $M'$-complexity at most $\kappa'$ and uncertainty at most $\epsilon$. We will then construct $\pi'$ so that $(\pi',\tilde\pi)$ is $(M',X'',\tilde X)$-adapted and so that $\pi'(X,W)$ has uncertainty at most $\epsilon$. This will yield the required partial isomorphism $(\pi',\tilde\pi)$, thereby establishing the lemma.

	\medskip
	\noindent\textbf{The construction of $X''$.}
	Partition each interval $I^{M'}_n$ into a ``left'' interval $I^{M',\text{left}}_n$ and a ``right'' interval $I^{M',\text{right}}_n$ so that each is a union of $M$-blocks and so that the relative length of the right interval is as close as possible to $\eta$ from below, i.e., $I^{M',\text{left}}_n=[a,c)$ and $I^{M',\text{right}}_n=[c,b)$, where $I^{M'}_n=[a,b)$ and $c$ is the smallest integer such that $M_c=1$ and $b-c<\eta(b-a)$ (note that $c$ might equal $b$).
	Define an auxiliary partial process $X'''$ as follows:
		\[ X'''_n :=
	\begin{cases}
		X_n &\text{if }n \in I^{M',\text{left}}_n \\
		X'_n &\text{if }n \in  I^{M',\text{right}}_n
	\end{cases}.
\]
	Define $X''$ as follows:
	\[ X''_n :=
	\begin{cases}
	 X'''_n &\text{if }\mathcal{I}(X'''_{I^{M'}_n}  \mid W) \le |I^{M'}_n| \kappa' - \ln 2 \\
	 X'_n &\text{otherwise}
	\end{cases} .\]
	Note that $X \hookrightarrow_M X'$, together with the fact that $X$ is finitarily $M$-block-dependent relative to $W$, implies that $X \hookrightarrow_{M'} X''$. Note also that $X'' \hookrightarrow_M X'$.

	To complete the definition of $X''$, it remains to specify the choice of the marker process $M'$.
	Choose $h_1$ and $h_2$ such that $\kappa < h_1 < h_2 < (1-\eta)h+\eta h'$, and let $N$ be a sufficiently large integer so that
	\begin{equation}\label{eq:N}
	 e^{\kappa n} \le \tfrac{1}{2}e^{\kappa' n}, \qquad e^{-(h_1- \kappa)n} \le \tfrac{\epsilon}{2}, \qquad e^{-(h_2-h_1)n} \le \tfrac{\epsilon}4 \qquad\text{for all }n \ge N.
	\end{equation}
\cref{lem:Shannon-McMillan-bidirectional} implies that $\frac{1}{|I^{M'}_0|}\mathcal{I}(X'''_{I^{M'}_0}\mid W) \to (1-\eta)h + \eta h'$
almost surely as the density of  $M'$ tends to $0$ (along a fixed sequence of marker processes).
In particular, if $M'$ is sufficiently sparse, then
\begin{equation}\label{eq:iso-info-bound}
\Pr\Big( |I^{M'}_0| h_2 < \cI(X'''_{I^{M'}_0}\mid W) < |I^{M'}_0| \kappa' - \ln 2 \Big) \ge 1- \tfrac\epsilon8.
\end{equation}
By \cref{lem:low-density-markers}, $M$ contains a finitary dilution with arbitrarily large minimal gap length (and hence also with arbitrarily low density). We choose $M'$ to be a finitary dilution of $M$ for which~\eqref{eq:iso-info-bound} holds and
\begin{equation}\label{eq:min-gap}
|I^{M'}_0| \ge N \qquad\text{almost surely} .
\end{equation}
	
	\medskip
	\noindent\textbf{The complexity of $X''$.}
	Let us show that $X''$ has $M'$-complexity at most $\kappa'$.
	Observe that $X''_{I^{M'}_0}$ equals either $X'''_{I^{M'}_0}$ or $X'_{I^{M'}_0}$.
	We first claim that, given $W$, there are at most $\frac12 e^{\kappa'|I^{M'}_0|}$ possible values $x$ such that $X''_{I^{M'}_0} = X'_{I^{M'}_0}=x$ occurs with positive probability. Indeed, since $X'$ has $M$-complexity at most $\kappa$ (and since $M'$ is a dilution of $M$), the number of possible values for $X'_{I^{M'}_0}$ is at most $e^{\kappa |I^{M'}_0|}$, which is at most $\frac12 e^{\kappa' |I^{M'}_0|}$ by~\eqref{eq:N} and~\eqref{eq:min-gap}.
	It remains to show that, given $W$, there are at most the same number of other possible values $x$ such that $X''_{I^{M'}_0} = X'''_{I^{M'}_0}=x$ occurs with positive probability. This is immediate from the definition of $X''$.
This proves that $X''$ has $M'$-complexity at most $\kappa'$.

		
	\medskip
	\noindent\textbf{The uncertainty of $X''$.}
	Let us show that $X''$ has uncertainty at most $\epsilon$.
	By~\eqref{eq:iso-info-bound}, we have
	\begin{equation}\label{eq:imprecision-bound}
	\Pr\left(X''_0 \ne  X'''_0\right) \le  \Pr\left(X''_{I^{M'}_0} \ne  X'''_{I^{M'}_0}\right) \le \Pr\left(\cI( X'''_{I^{M'}_0}\mid W) > |I^{M'}_0| \kappa' - \ln 2\right) \le \tfrac\epsilon8.
	\end{equation}
	Also,
	\[ \Pr\left(X'''_0 = \star\right) \le \Pr\left(0 \in I^{M',\text{right}}_0\right) \le \eta \le \tfrac{\epsilon}{2}. \]
	Altogether we conclude that $\Pr(X''_0 = \star) \le \epsilon$.

	\medskip
	\noindent\textbf{The construction of $\pi'$.}
	We now turn to defining the map $\pi'$ which will yield the desired partial isomorphism $(\pi',\tilde\pi)$.
	By the assumption that $(\pi,\tilde\pi)$ is $(M,X',\tilde X)$-adapted, there exists a joining of $(X,W)$ and $(\tilde X,W)$ relative to $W$ such that $\pi(X,W) \preceq \tilde X$ and $\tilde \pi(\tilde X,W) \preceq X' \preceq X''$ almost surely.
We may further assume that this joining has the property that $(\cL((X'',\tilde X)_{I^{M'}_n} \mid W))_{n \in \Z}$ is a finitary factor of $W$. 
To see this, note first that $(\cL(X'')_{I^{M'}_n} \mid W))_{n \in \Z}$ is a finitary factor of $W$ (this follows from the facts that $X$ is finitarily $M'$-block-dependent, $M'$ is a finitary factor of $W$, and $X \hookrightarrow_{M'} X''$). Similarly, $(\cL(\tilde X)_{I^{M'}_n} \mid W))_{n \in \Z}$ is a finitary factor of $W$. Thus, if needed, we may replace the original joining by a new one in which the conditional coupling of $(X'',\tilde X)_{I^{M'}_n}$ given $W$ is chosen to be $\cL((X'',\tilde X)_{I^{M'}_n} \mid W_{[-R,R]})$ (with respect to the original joining), where $R$ (a stopping time, measurable with respect to $W$) is large enough to determine the conditional marginal distributions of $X''_{I^{M'}_n}$ and $\tilde X_{I^{M'}_n}$.
This joining has the claimed property.

   Let $U_W$ denote the set of admissible values of $\tilde X_{I^{M'}_0}$ given $W$ and let $V_W$ denote the set of admissible values of $X''_{I^{M'}_0}$ given $W$. Let $\rho_W$ and $\sigma_W$ denote the probability measures on $U_W$ and $V_W$, respectively, corresponding to the conditional distributions of $\tilde X_{I^{M'}_0}$ and $X''_{I^{M'}_0}$ given $W$. 
  	 Let $\lambda_W$ be the coupling of $\rho_W$ and $\sigma_W$ induced by the joint distribution of $(\tilde X_{I^{M'}_0},X''_{I^{M'}_0})$ given $W$ under the above joining. Note that the function $W \mapsto \lambda_W$ is measurable, equivariant and finitary. 

   We apply \cref{lem:coupling_reduction} to deduce that almost surely there exists a coupling $\tilde \lambda_W$ of $(U_W,\rho_W)$ and $(V_W,\sigma_W)$ which is absolutely continuous with respect to $\lambda_W$ and satisfies that
\begin{equation}\label{eq:committed}
\left|\left\{v \in V_W : v \mbox{ is not } \tilde\lambda_W \mbox{-committed} \right\} \right| \le |U_W|-1.
\end{equation}
   Moreover, the proof of \cref{lem:coupling_reduction} implicitly describes an ``algorithm'' which given a coupling $\lambda_W$ outputs $\tilde \lambda_W$ as above. Thus there is a Borel measurable function  $\lambda_W \mapsto \tilde \lambda_W$ satisfying the above.
   We can furthermore arrange that the map $W \mapsto \tilde \lambda_W$ will be equivariant.
Altogether, the function $W \mapsto \tilde \lambda_W$ is measurable, equivariant and finitary.

Given an interval $I \subset \Z$ containing $0$, and a probability measure $\lambda$ on $U \times V \subset \tilde \cA^I \times \cA^I$, let 
	$\Phi_\lambda:V \to \tilde \cA \cup \{\star\}$ be the function that returns $\star$ whenever $v$ does not uniquely determine $u_0$ up to a $\lambda$-null set, and the uniquely determined value of $u_0$ otherwise. More precisely,
	\[\Phi_\lambda(v) := \begin{cases}
		\tilde a&\text{if }\lambda \left( \left\{ (u,v) : u \in U,~ u_{0} = \tilde a   \right\}\mid \left\{ (u,v) : u \in U \right\} \right) =1 \\
		\star &\text{otherwise}
		\end{cases}.
		\]
Strictly speaking, this does not define $\Phi_\lambda(v)$ when $\lambda(U \times \{v\})=0$, in which case we set $\Phi_\lambda(v):=\star$.

Define an equivariant map $\pi':\cA^\Z \times \cB^\Z \to (\tilde \cA \cup \{\star\})^\Z$ by
\[ \pi'(X,W)_0 := \Phi_{\tilde \lambda_W}(X''_{I_0^{M'}}).\]
Since $X'' \hookrightarrow_{M} X' \hookrightarrow_{M} \pi(X,W) \preceq \tilde X$, we have that $\Phi_{\lambda_W}(X''_{I_0^{M'}}) = \pi(X,W)_0$ almost surely whenever $\pi(X,W)_0 \ne \star$. 
 Since $\tilde \lambda_W$ is absolutely continuous with respect to $\lambda_W$,  in this case we have $\Phi_{\tilde \lambda_W}(X''_{I_0^{M'}}) = \pi(X,W)_0$, so
$\pi'(X,W)_0 = \pi(X,W)_0$. 
This verifies that $\pi'(X,W)$ almost surely extends $\pi(X,W)$.

\medskip
\noindent\textbf{The adaptedness of the partial isomorphism.}
 	We show that $(\pi',\tilde\pi)$ is $(M',X'',\tilde X)$-adapted.
	To this end, we first need to show that $X \hookrightarrow_{M'} X'' \hookrightarrow_{M'} \pi'(X,W)$. Indeed, we have already seen that $X \hookrightarrow_{M'} X''$, and $X'' \hookrightarrow_{M'} \pi'(X,W)$ follows easily from the definition of $\pi'$.
It remains to demonstrate the existence of a joining of $(X,W)$ and $(\tilde X,W)$ such that $\pi'(X,W) \preceq \tilde X$ and $\tilde\pi(\tilde X,W) \preceq X''$.
We set the joint conditional distribution of $X''$ and $\tilde X$ on $I^{M'}_0$ given $W$ according to $\tilde\lambda_W$, and arbitrarily extend this to a joining.
  The fact that $\tilde \lambda_W$ is almost surely absolutely continuous with respect to $\lambda_W$ implies that $\tilde \pi(\tilde X,W) \preceq X' \preceq X''$ with respect to this joining.
  The definition of $\pi'$ together with the fact that $\tilde \lambda_W$ is supported on $U_W \times V_W$ shows that $\pi'(X,W) \preceq \tilde X$ with respect to this joining. 
  
    
    \medskip
	\noindent\textbf{The uncertainty of $\pi'$.}
     It remains to show that $\pi'(X,W)$ has uncertainty at most $\epsilon$. Observe that $\pi'(X,W)_{I^{M'}_0} = \tilde X_{I^{M'}_0}$ on the event that $X''_{I^{M'}_0}$ is $\tilde \lambda_W$-committed. In particular,
\[
\Pr(\pi'(X,W)_0 = \star \mid W) \le \sigma_W\left( \left\{ v \in V_W : v \mbox{ is not } \tilde \lambda_W \mbox{-committed} \right\}  \right) .\]
   Note that by~\eqref{eq:committed}, almost surely,
   \[
   \sigma_W\left( \left\{ v \in V_W : v \mbox{ is not } \tilde \lambda_W \mbox{-committed} \right\}  \right) \le \Pr\left(\cI( X''_{I^{M'}_0} \mid W) \le |I^{M'}_0|h_1 \mid W \right) + e^{-h_1|I^{M'}_0|} |U_W|.
   \]
   Thus, taking expectation over $W$, we get that
   \[ \Pr(\pi'(X,W)_0 = \star) \le \Pr\left(\cI( X''_{I^{M'}_0} \mid W) \le |I^{M'}_0|h_1 \right) + \E\left[e^{-h_1|I^{M'}_0|} |U_W|\right]  .\] 
   Let us bound each of the two terms on the right-hand side.
   For the first term, we have by \cref{lem:info-comparison-bound} (applied conditionally on $W$ and then taking expectation over $W$) and~\eqref{eq:min-gap},
   \begin{align*} 
   \Pr\left(\cI( X''_{I^{M'}_0} \mid W) \le |I^{M'}_0|h_1\right) &\le \Pr\left(X''_{I^{M'}_0} \neq X'''_{I^{M'}_0}\right) + \Pr\left(\cI( X'''_{I^{M'}_0}\mid W) \le |I^{M'}_0| h_2\right) + e^{-(h_2-h_1)N} \le \tfrac\epsilon2 ,
   \end{align*}
   where the second inequality follows from~\eqref{eq:N}, \eqref{eq:iso-info-bound} and~\eqref{eq:imprecision-bound}.
   For the second term, using that $\tilde X$ has $M$-complexity at most $\kappa$ (and since $M'$ is a dilution of $M$), it follows that
   $|U_W| \le e^{\kappa |I^{M'}_0|}$ almost surely. Hence, using~\eqref{eq:N} and~\eqref{eq:min-gap}, we see that $e^{-h_1|I^{M'}_0|}|U_W| \le e^{-(h_1-\kappa)|I^{M'}_0|} \le \tfrac\epsilon2$ almost surely. 
   Putting these bounds together, we conclude that $\Pr(\pi'(X,W)_0 = \star) \le \epsilon$.	
\end{proof}

We are now ready to prove \cref{thm:relative_smorodinsky_k_dependent}.
The idea is to apply ``ping-pong'' iterations of \cref{lem:improve_partial_isomorphism} to obtain a sequence of better and better finitary partial isomorphisms and then apply \cref{lem:limit_of_partial_isomorphisms}. In fact, this idea can also be used to obtain \cref{thm:pro-dependent}, but with the aim of making the proof easier to digest, we instead prove \cref{thm:relative_smorodinsky_k_dependent} first, and then use it to deduce the stronger \cref{thm:pro-dependent} in \cref{sec:pro-dependent}.

Given a process $X$, an $\N$-valued process $K$, a marker process $M$, and a subset $A$ of the alphabet of $X$, we define a partial process $X^{(K,M,A)}$ of $X$ by
	\begin{equation}\label{eq:X-K-M}
	X^{(K,M,A)}_n := \begin{cases}
		X_n & \text{if } X_n \in A\text{ and } [n-K_n,n+K_n] \subseteq I^M_n\\
		\star & \mbox{otherwise}
	\end{cases}.
	\end{equation}
Note that if $K$ and $M$ are finitary factors of $W$, then $X^{(K,M,A)}$ is a finitary $M$-block factor of $X$ relative to $W$, and if, in addition, $X$ is finitarily $K$-dependent relative to $W$, then $X^{(K,M,A)}$ is also finitarily $M$-block-dependent relative to $W$.
Recall also that finitary $M$-block codes preserve finitary $M$-block-dependence and that compositions of finitary $M$-block codes are again finitary $M$-block codes.

\begin{proof}[Proof of \cref{thm:relative_smorodinsky_k_dependent}]
Denote $h:=h(X \mid W)=h(\tilde X \mid W)$. Let $K$ and $\tilde K$ be finitary factors of $W$ such that $X$ and $\tilde X$ are finitarily $K$- and $\tilde K$-dependent relative to $W$, respectively.

We will prove by induction the existence of a sequence $(M^{(n)},A_n,\tilde A_n,X^{(n)},\tilde X^{(n)},\pi_n,\tilde\pi_n)_{n=1}^\infty$ such that
\begin{itemize}
 \item $M^{(n)}$ are marker processes which are finitary factors of $W$ that dilute one another.
 \item  $A_n$ are finite sets increasing to $\cA$, and $\tilde\cA_n$ are finite sets increasing to $\tilde\cA$. 
  \item $X^{(n)}$ is a finitary $M^{(n)}$-block factor and partial process of $X^{(K,M^{(n)},A_n)}$.
  \item $\tilde X^{(n)}$ is a finitary $M^{(n)}$-block factor and partial process of $\tilde X^{(\tilde K,M^{(n)},\tilde A_n)}$.
  \item Each of $X^{(n)}$ and $\tilde X^{(n)}$ has $M^{(n)}$-complexity strictly less than $h$ and uncertainty at most $\frac1n$.
 \item $(\pi_n,\tilde\pi_n)$ is an $(M^{(n)},X^{(n)},\tilde X^{(n)})$-adapted partial isomorphism of $X$ and $\tilde X$ which extends $(\pi_{n-1},\tilde\pi_{n-1})$, such that the uncertainties of $\pi_n(X,W)$ and $\tilde\pi_n(\tilde X,W)$ are at most $\frac3n$.
 \end{itemize}
The theorem will then follow from \cref{lem:limit_of_partial_isomorphisms}.


Let $M^{(1)}$ be any marker process which is a finitary factor of $W$, set $A_1=\tilde A_1= \emptyset$ and $X^{(1)}=\tilde X^{(1)}\equiv \star$. Let $(\pi_1,\tilde\pi_1)$ be the trivial partial isomorphism of $X$ and $\tilde X$ which equals $\star$ everywhere.

Suppose we have defined $(M^{(n)},A_n,\tilde A_n,X^{(n)},\tilde X^{(n)},\pi_n,\tilde\pi_n)$. The construction for $n+1$ consists of two steps (``ping'' and ``pong''), one to improve $\pi_n$ and another to improve $\tilde\pi_n$.
The first step proceeds as follows:
Let $\kappa<h$ be such that both $X^{(n)}$ and $\tilde X^{(n)}$ have $M^{(n)}$-complexity at most~$\kappa$.
Let $M$ be a sparse enough finitary dilution of $M^{(n)}$. Let $A=A_{n+1}$ be a large enough finite subset of $\cA$ containing $A_n$ so that $h' := h(X^{(K,M,A)} \mid W) > \kappa$ and so that $X^{(K,M,A)}$ has uncertainty strictly less than $\frac1{n+1}$.
Toward applying \cref{lem:improve_partial_isomorphism}, set $Y:=X^{(K,M,A)}$, $Y' := X^{(n)}$ and $\tilde Y := \tilde X^{(n)}$.
Since $X$ is finitarily $K$-dependent relative to $W$, and $K$ and $M$ are finitary factors of $W$, it follows that $Y$ is finitarily $M$-block-dependent relative to $W$.
 Also, note that $(\pi_n,\tilde\pi_n)$ can be seen as an $(M,Y',\tilde Y)$-adapted partial isomorphism of $Y$ and $\tilde Y$, and that $\tilde Y$ has uncertainty strictly less than $\frac3{n+1}$.
Thus, \cref{lem:improve_partial_isomorphism} yields a marker process $M'$ which is a finitary dilution of $M$, a partial process $X^{(n+1)}$ of $M'$-complexity strictly less than $h'$ and uncertainty at most $\frac1{n+1}$ such that $X^{(n)} \preceq X^{(n+1)} \preceq X^{(K,M,A)}$ and an $(M',X^{(n+1)},\tilde X^{(n)})$-adapted partial isomorphism $(\pi_{n+1},\tilde \pi_n)$ of $X^{(K,M,A)}$ and $\tilde X^{(n)}$ which extends $(\pi_n,\tilde\pi_n)$ and such that $\pi_{n+1}(X,W)$ has uncertainty at most $\frac3{n+1}$ (note that when applying \cref{lem:improve_partial_isomorphism}, we regard $Y$ and $\tilde Y$ as regular processes, and not as partial processes).

The second step is similar, with the roles of $X$ and $\tilde X$ reversed:
Let $\kappa'<h$ be such that both $\tilde X^{(n)}$ and $X^{(n+1)}$ have $M'$-complexity at most $\kappa'$.
Let $M''$ be a sparse enough finitary dilution of $M'$. Let $\tilde A=\tilde A_n$ be a large enough finite subset of $\tilde\cA$ containing $\tilde A_n$ so that $h'' := h(\tilde X^{(\tilde K,M'',\tilde A)} \mid W) > \kappa'$ and so that $\tilde X^{(\tilde K,M'',\tilde A)}$ has uncertainty strictly less than $\frac1{n+1}$.
Toward applying \cref{lem:improve_partial_isomorphism}, set $Z:=\tilde X^{(\tilde K,M'',\tilde A)}$, $Z' := \tilde X^{(n)}$ and $\tilde Z := X^{(n+1)}$, and note that $(\tilde\pi_n,\pi_{n+1})$ can be seen as an $(M'',Z',\tilde Z)$-adapted partial isomorphism of $Z$ and $\tilde Z$, and that $\tilde Z$ has uncertainty at most $\frac1{n+1}$.
Thus, \cref{lem:improve_partial_isomorphism} yields a marker process $M^{(n+1)}$ which is a finitary dilution of $M''$, a partial process $\tilde X^{(n+1)}$ of $M^{(n+1)}$-complexity strictly less than $h''$ and uncertainty at most $\frac1{n+1}$ such that $\tilde X^{(n)} \preceq \tilde X^{(n+1)} \preceq \tilde X^{(\tilde K,M'',\tilde A)}$ and an $(M^{(n+1)},\tilde X^{(n+1)},X^{(n+1)})$-adapted partial isomorphism $(\tilde \pi_{n+1},\pi_{n+1})$ of $\tilde X^{(\tilde K,M'',\tilde A)}$ and $X^{(n+1)}$ which extends $(\tilde\pi_n,\pi_{n+1})$ and such that $\tilde\pi_{n+1}(X,W)$ has uncertainty at most $\frac3{n+1}$.
Finally, note that $(\pi_{n+1},\tilde \pi_{n+1})$ can be seen as an $(M^{(n+1)},X^{(n+1)},\tilde X^{(n+1)})$-adapted partial isomorphism of $X$ and $\tilde X$. This completes the induction step.
\end{proof}

\subsection{Proof of \cref{thm:pro-dependent}}\label{sec:pro-dependent}
We now show how to use \cref{thm:smorodinsky_k_dependent,thm:relative_smorodinsky_k_dependent} to deduce \cref{thm:pro-dependent}.
For this, we require an additional result about finitary isomorphisms of \iid\ processes taking values in the space of sequences $\N^\N$.
As we have mentioned, Keane and Smorodinsky showed that any two finite-valued \iid\ processes of equal entropy are finitarily isomorphic. 
\cref{thm:smorodinsky_k_dependent} implies that this is also true for \iid\ processes taking values in a discrete countable space (recall that the two notions of finitary discussed in \cref{sec:defs_notation} are equivalent in this case), where the infinite-entropy case was already proved by Petit~\cite{petit1982deux}.
The following result (\cref{thm:top-finitary} below) extends this to processes taking values in Polish spaces which are not necessarily discrete.

We first mention a simple, general result about embedding Polish spaces off null sets:
\begin{prop}\label{prop:top_embed_off_null}
	Let $M$ be a Polish space equipped with a Borel probability measure. Then there exists a  set $M'\subset M$ of full measure that can be topologically embedded in $\{0,1\}^\N$. 
\end{prop}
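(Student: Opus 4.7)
The plan is to construct a full-measure Borel subset $M' \subseteq M$ whose subspace topology is zero-dimensional, and then invoke the classical fact that every separable, metrizable, zero-dimensional space embeds topologically into $\{0,1\}^\N$ (for example, via the map sending a point $x$ to the sequence of indicators of $x$ lying in a countable basis of clopen sets). So the reduction is: find a countable basis of $M$ whose members have boundaries of measure zero, and then remove those boundaries from $M$.

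First, I would fix a compatible metric $d$ on $M$ and a countable dense set $\{x_k\}_{k \in \N}$. For each fixed $k$, the spheres $S(x_k,r) := \{y \in M : d(x_k,y) = r\}$ for $r > 0$ form a pairwise disjoint family, so at most countably many have positive $\mu$-measure. Hence for each $k$ one can select a countable $R_k \subseteq (0,\infty)$ which is dense in $(0,\infty)$ and satisfies $\mu(S(x_k,r)) = 0$ for every $r \in R_k$. The family $\cB := \{B(x_k,r) : k \in \N,\, r \in R_k\}$ is then a countable basis of the topology of $M$, and since $\partial B(x_k,r) \subseteq S(x_k,r)$, each element of $\cB$ has boundary of $\mu$-measure zero.

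Next I would define $M' := M \setminus \bigcup_{B \in \cB} \partial B$, which has full $\mu$-measure by countable subadditivity. For each $B \in \cB$, the trace $B \cap M'$ is open in $M'$ and, because $\overline{B} \subseteq B \cup \partial B$ and $\partial B \cap M' = \emptyset$, also closed in $M'$; hence it is clopen. Thus $\{B \cap M' : B \in \cB\}$ is a countable clopen basis for the subspace topology on $M'$, so $M'$ is separable, metrizable, and zero-dimensional. Applying the embedding theorem cited above produces the desired topological embedding of $M'$ into $\{0,1\}^\N$.

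The only nontrivial step is the choice of basis in the second paragraph: one must exploit the finiteness of $\mu$ to avoid ``charged'' spheres and thereby make the boundaries of a basis negligible. Once this basis is in hand, passing to the subspace and quoting the embedding result is routine.
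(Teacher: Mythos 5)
Your proof is correct, and it reaches the conclusion by a somewhat different route than the paper. Both arguments start from a compatible metric $d$ and a countable dense set, and both exploit the fact that for each center only countably many spheres can carry positive $\mu$-mass; this is the ``only nontrivial step'' in both cases. The difference is in what one does next. You pick, for each center $x_k$, a countable dense set of radii whose spheres are $\mu$-null, delete the (null) union of the corresponding ball boundaries, observe that the surviving balls become clopen in the trace topology, and then invoke the classical theorem that a second-countable zero-dimensional metrizable space embeds into $\{0,1\}^\N$. The paper instead fixes a single scaling parameter $\lambda$ chosen so that for every $y$ in the dense set the ``$\lambda$-rational spheres'' around $y$ are collectively $\mu$-null, removes that null set, and exhibits an explicit embedding $x \mapsto (\lambda^{-1}d(x,y))_{y\in D}$ into $(\R_+\setminus\Q)^D$, followed by a hands-on embedding of $\R_+\setminus\Q$ into the Cantor set. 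So the paper is more self-contained (it never cites the zero-dimensional embedding theorem, essentially reproving the relevant special case through the irrationals), while your argument is shorter and more conceptual, reducing cleanly to a standard fact. One small remark: verifying that $\Psi\circ\Phi=\mathrm{id}$ (as the paper does) and verifying that the clopen sets $B\cap M'$ separate points (as your argument implicitly requires for injectivity of the indicator map) are the same kind of check, and both are routine; you might want to state explicitly that distinct points of $M'$ are separated by some $B\in\cB$, which follows immediately since $\cB$ is a basis for a Hausdorff space.
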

\begin{proof}
	Let $d:M \times M \to \mathbb{R}_+$ be a  metric on $M$ which is compatible with its Polish topology.
	Choose a countable dense subset $D \subseteq M$. For $\lambda >0$ and $y \in M$, define
	\[M_{\lambda,y} :=\{ x \in M:~ d(x,y) \in \lambda\mathbb{Q}\}.\]
	Then for every $y \in M$, the set $M_{\lambda,y}$ has zero probability for Lebesgue almost every $\lambda$. It follows that for Lebesgue almost every $\lambda$, the set
	$M_\lambda := \bigcap_{y \in D} M_{\lambda,y}$ has zero probability. 
	To complete the proof, it suffices to show that $M \setminus M_\lambda$ can be topologically embedded in $\{0,1\}^\N$ for any $\lambda$. Towards this goal, fix $\lambda$ and consider the function 
	$\Phi:M\setminus M_\lambda \to (\mathbb{R}_+ \setminus \mathbb{Q})^{D}$ given by
	\[\Phi(x)_y := \lambda^{-1} d(x,y). \]
	Clearly $\Phi$ is continuous. We will show that $\Phi$ is a homeomorphism onto its image $\Phi(M \setminus M_\lambda)$ as follows.
	Let $M^*$ denote the space of closed subsets of $M$, equipped with the Fell topology (that is, the topology induced by the Hausdorff distance).
	The space $M$ naturally embeds in $M^*$ via the map $x \mapsto \{x\}$. 
	Define a function 
	$\Psi:  (\mathbb{R}_+ \setminus \mathbb{Q})^{D} \to M^*$ by
	\[\Psi(v) := \bigcap_{y \in D} \left\{ x \in M:~ d(x,y)= v_y\right\}.\]
	Again, it is straightforward  to check that $\Psi$ is continuous.
	Let us check that every $x \in M\setminus M_\lambda$ satisfies $\Psi(\Phi(x))=\{x\}$: By the density of $D$, there exists a sequence $(y_n)_{n \in \N}$ of elements of $D$ such that $d(x,y_n) \to 0$ so $\lim_{n \to \infty}\Phi(x)_{y_n} =0$. Then clearly $\Psi(\Phi(x))$ consists of accumulation points of $\{y_n\}_{n=1}^\infty$, so $\Psi(\Phi(x))=\{x\}$.
	We have shown that $M \setminus M_\lambda$ is homeomorphic to a subset of $(\mathbb{R}_+ \setminus \mathbb{Q})^{D} \cong (\mathbb{R}_+ \setminus \mathbb{Q})^\N$.
	Now $\mathbb{R}_+ \setminus \mathbb{Q}$ is clearly homeomorphic to a subset of $[0,1]\setminus \mathbb{Q}$. The latter is a homeomorphic to a subset of the trinary cantor set (by sending $x = \sum_{k=1}^\infty 2^{-k}x_k \in [0,1]\setminus \mathbb{Q}$ to $\sum_{k=1}^\infty3^{-k}x_k$, where $x_1,x_2,\ldots \in \{0,1\}$ are the digits in the binary expansion of $x$). This shows that $M\setminus M_\lambda$ is homeomorphic to a subset of $(\{0,1\}^\mathbb{N})^\mathbb{N} \cong \{0,1\}^\N$.
\end{proof}


\begin{thm}\label{thm:top-finitary}
Any two equal-entropy \iid\ $\Z$-processes taking values in Polish spaces are 
	 topo-finitarily isomorphic. 
	That is, if $X$ and $Y$ are equal entropy processes taking values in Polish spaces $\cA$ and $\cB$ respectively, then there is an equivariant measurable function $\phi: \cA^\Z \to \cB^\Z$ that restricts to a homeomorphism between sets of full measure
	and 
	such that $\phi(X)$ has the distribution of $Y$.
\end{thm}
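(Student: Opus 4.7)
My plan is to reduce, via Proposition~\ref{prop:top_embed_off_null}, to iid processes on the Cantor space $\{0,1\}^\N$, and then further reduce to the countable-alphabet case so as to apply Theorem~\ref{thm:smorodinsky_k_dependent}. The proposition proved immediately above, together with the stop/topo-finitary equivalence for discrete alphabets in Lemma~\ref{lem:finitary-equiv}, provides the topological bridge we need.

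First, I would apply Proposition~\ref{prop:top_embed_off_null} to the marginal law $\mu$ of $X$ on $\cA$ (and similarly to $Y$ on $\cB$) to obtain a full-measure Borel subset $\cA_0 \subset \cA$ and a topological embedding $\iota \colon \cA_0 \hookrightarrow \{0,1\}^\N$. The coordinatewise extension $\iota^\Z \colon \cA_0^\Z \to (\{0,1\}^\N)^\Z$ is $\Z$-equivariant and restricts to a homeomorphism onto its image, so composing on each side reduces the theorem to the case $\cA = \cB = \{0,1\}^\N$. In this Cantor setting, if the common entropy $h$ is finite, both marginals on $\{0,1\}^\N$ are necessarily atomic, concentrated on countable sets of equal Shannon entropy; relabeling atoms by positive integers turns $X$ and $Y$ into countable-valued iid $\Z$-processes of entropy $h$. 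Theorem~\ref{thm:smorodinsky_k_dependent} then yields a stop-finitary isomorphism between them, and Lemma~\ref{lem:finitary-equiv} upgrades this to a topo-finitary isomorphism in the discrete topology on the countable alphabet. For $h = \infty$ I would instead expand each $X_n \in \{0,1\}^\N$ into its bit-stream $(X_{n,k})_{k \in \N}$ and apply a $\Z$-equivariant finitary source-coding argument to re-express $X$ as a countable-valued iid $\Z$-process of infinite entropy, and then appeal once more to Theorem~\ref{thm:smorodinsky_k_dependent}.

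The crux of the argument, and the main obstacle, is ensuring that the relabeling map in the finite-entropy case (or the source-coding map in the infinite-entropy case) is topo-finitary with respect to the Cantor topology on $\{0,1\}^\N$, not merely the discrete topology on the countable alphabet: a countable atomic subset of $\{0,1\}^\N$ need not be topologically discrete, so the naive atom-to-integer map can fail to be continuous at accumulation points of the atomic support. My approach to this is to apply Proposition~\ref{prop:top_embed_off_null} in a refined manner, using the freedom in its construction to place each atom in its own clopen cylinder of $\{0,1\}^\N$ — for instance by composing the embedding with a prefix-free encoding of the atom's index into $\{0,1\}^\N \cong \{0,1\}^\N \times \{0,1\}^\N$ — so that the resulting subspace of $\{0,1\}^\N$ that carries the marginal is genuinely discrete. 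Once this is arranged, the chain of topo-finitary maps produced by the successive reductions composes without further difficulty to give the desired $\Z$-equivariant measurable map $\phi$ that restricts to a homeomorphism between full-measure subsets and satisfies $\phi(X) \eqd Y$.
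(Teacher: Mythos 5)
Your reduction to $\cA = \cB = \{0,1\}^\N$ via Proposition~\ref{prop:top_embed_off_null} is exactly the paper's opening step, but from there the argument has a genuine gap. The proposed relabeling is coordinatewise, of the form $\phi(x)_n = \ell(x_n)$ for a bijection $\ell$ from the atom set $S\subset\{0,1\}^\N$ of the marginal onto $\N$. A coordinatewise map of this kind is topo-finitary only if $\ell$ is continuous on a conull subset of $\{0,1\}^\N$; since $S$ itself is conull, this forces $\ell\colon S\to\N$ to be continuous from the subspace topology on $S$ to the discrete topology, i.e.\ it forces $S$ to be a topologically \emph{discrete} subset of $\{0,1\}^\N$. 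But a countable atomic marginal of finite Shannon entropy need not have discrete support: take $\mu(\{0^\infty\})=\tfrac12$ and $\mu(\{0^n10^\infty\})=2^{-(n+2)}$ for $n\ge0$, which has the positive-mass accumulation point $0^\infty$ inside $S$. Your proposed fix, re-embedding $S$ so that each atom sits in its own clopen cylinder, cannot repair this: any map that does so (for instance your prefix-free index encoding) fails to be continuous at the accumulation atom, so the resulting coordinatewise $\phi$ is discontinuous on a positive-measure set. No amount of ``freedom'' in Proposition~\ref{prop:top_embed_off_null} helps, because a topological embedding preserves the subspace topology of $S$ and hence cannot make a non-discrete $S$ discrete. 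The infinite-entropy case is in even worse shape: if the marginal has a nonatomic part, no countable-valued coordinatewise map is a.e.\ injective at all, so there is no ``$\Z$-equivariant finitary source-coding'' of the kind you sketch.

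The paper's proof resolves precisely this issue by refusing to code coordinatewise. After the reduction to $\cA\subset\{0,1\}^\N$, it chooses a fresh discrete target alphabet $\cB\subset(\N\cup\{*\})^\N$ (finite number-prefix padded by stars) and constructs the isomorphism layer by layer: writing $X^{(t)}$ for the process of $t$-bit truncations $(X_{n,1},\dots,X_{n,t})_n$, it builds by induction on $t$ a finitary isomorphism $\pi_t$ from $X^{(t)}$ to a matching finite-valued iid process $Y^{(t)}$, each extending the previous one, using Keane--Smorodinsky for $t=1$ and Theorem~\ref{thm:relative_smorodinsky_k_dependent} (relative to $Y^{(t)}$) for the inductive step. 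The limit $\pi=\lim_t\pi_t$ is then topo-finitary because each target bit $\pi(X)_{n,t}$ depends on finitely many source bits of $X$ taken across several coordinates $m$, not just $m=n$. It is exactly this non-coordinatewise structure that lets the construction ``look sideways'' to resolve the accumulation atom in the example above, handles the nonatomic case, and treats finite and infinite entropy uniformly without a case split. If you want to make your finite-entropy argument go through, you would need to replace the per-coordinate relabeling by an inductive, cross-coordinate construction along these lines, which is no longer a shortcut around the paper's proof but essentially a rediscovery of it.
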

\begin{proof}
Let $X$ be an \iid\ process taking values in a Polish space $\cA$. By \cref{prop:top_embed_off_null}, there exists a set $\cA' \subset \cA$ such that $\Pr(X_0 \in \cA')=1$ and which can be continuously embedded in $\{0,1\}^\N$, so by applying this embedding we can assume that $\cA \subset \{0,1\}^\N$.

For processes taking values in discrete sets, topo-finitary isomorphism coincides with stop-finitary isomorphism. Since any two equal-entropy \iid\ processes taking values in discrete countable spaces are finitarily isomorphic (by \cref{thm:smorodinsky_k_dependent}), to complete the proof, it suffices to show that $X$ is topo-finitarily isomorphic to some \iid\ process taking values in a discrete countable set.
 Specifically, we will show that $X$ is topo-finitarily isomorphic to an \iid\ process $Y$ taking values in the discrete set $\cB \subset (\N \cup \{*\})^\N$ consisting of sequences which have a finite prefix of numbers followed by an infinite sequence of stars, i.e.,
\[ \cB = \left\{ y \in (\N \cup \{*\})^\N : \text{there exists }i \in \N\text{ such that } y_1,\dots,y_{i-1} \in \N\text{ and } y_i = y_{i+1} = \cdots = *\right\}.\]

We may assume that $H(X_{0,t} \mid X_{0,1},\dots,X_{0,t-1})>0$ for all $t \in \N$, since $X$ is trivially finitarily isomorphic to the \iid\ process $((X_{n,t})_{t \in \cT})_{n \in \Z}$, where $\cT \subset \N$ consists of those $t$ where this entropy is positive, and since $X$ itself is already a finite-valued \iid\ process in the case that $\cT$ is finite.
To specify the process $Y$, we first let $(Y_{0,t})_{t \in \N}$ be a sequence of finite-valued random variables taking values in $\N \cup \{*\}$ such that:
\begin{itemize}
	\item $H(Y_{0,t} \mid Y_{0,1},\dots,Y_{0,t-1})=H(X_{0,t} \mid X_{0,1},\dots,X_{0,t-1})$.
	\item $\Pr(Y_{0,t}=* \mid Y_{0,t-1}=*)=1$.
	\item $\Pr(Y_{0,t}=* \mid Y_{0,1},\dots,Y_{0,t-1}) \ge 1/2$ almost surely.
\end{itemize} 
The existence of such a sequence of random variables is easily verified by induction on $t$. Let $Y$ be the \iid\ process having $Y_0=(Y_{0,t})_{t \in \N}$.
The second two properties above together assure that $Y_0 \in \cB$ almost surely.

Denote $X^{(t)}=(X_{n,1},\dots,X_{n,t})_{n \in \Z}$ and $Y^{(t)}=(Y_{n,1},\dots,Y_{n,t})_{n \in \Z}$. By induction on $t$, we prove the existence of a finitary isomorphism $\pi_t:(\{0,1\}^t)^\Z \to ((\N \cup \{*\})^t)^\Z$ between $X^{(t)}$ and $Y^{(t)}$. Each will extend the previous in the sense that $\pi_{t+1}(X^{(t+1)})_{0,i}=\pi_t(X^{(t)})_{0,i}$ and $\pi_{t+1}^{-1}(Y^{(t+1)})_{0,i} = \pi_t^{-1}(Y^{(t)})_{0,i}$ for all $1 \le i \le t$. 
Since $X^{(1)}$ and $Y^{(1)}$ are equal-entropy finite-valued \iid\ processes, the base case follows from the Keane--Smorodinsky finitary isomorphism theorem (which is a particular case of \cref{thm:smorodinsky_k_dependent}). The induction step follows using \cref{thm:relative_smorodinsky_k_dependent} and the induction hypothesis. 
It is then straightforward that the limit $\pi := \lim_{t \to \infty} \pi_t$ is well defined and is a topo-finitary isomorphism between $X$ and $Y$.
\end{proof}

\begin{proof}[Proof of \cref{thm:pro-dependent}]
%
%
Let $X$ be a countable-valued process  that is finitarily pro-dependent relative to $W$. 
	We will construct an $\N^\N$-valued \iid\ process $Y = (Y_n)_{n \in \Z}=((Y_{n,j})_{j=1}^\infty)_{n \in \Z}$, independent of $W$, and a relative isomorphism between $(X,W)$ and $(Y,W)$, such that the following holds\footnote{The property satisfied by this isomorphism is something of a mixture between the notion of topo-finitary and stop-finitary. It can be also be expressed as a stop-finitary isomorphism with respect to a certain filtration.}: Almost surely there exists a finite $N$ such that $X_0$ is determined by $(W_{[-N,N]},(Y_{n,j})_{|n|\le N,1\le j\le N })$ and almost surely for every $j \in \N$ there a finite $N_j$ such that $Y_{0,j}$ is determined by $(W,X)_{[-N_j,N_j]}$. In light of \cref{thm:top-finitary}, this will yield the theorem.

Let $(X^{(i)})$ be a sequence of partial processes increasing to $X$ as in the definition of finitarily pro-dependent. Let $\bar X$ be the process defined by $\bar X_n := (X^{(1)}_n,X^{(2)}_n,\dots)$. Note that $\bar X$ takes values in a discrete countable subset of $(\cA \cup \{\star\})^\N$. Since each $X^{(i)}$ is a finitary factor of $(W,X)$, it is straightforward that $X$ and $\bar X$ are finitarily isomorphic relative to $W$.
For each $i \ge 1$, we apply \cref{thm:relative_smorodinsky_k_dependent} to deduce that $X^{(i)}$ is finitarily isomorphic to \iid\ relatively to $(W,X^{(1)},\dots,X^{(i-1)})$. Together this gives that $X$ is isomorphic relatively to $W$ to an \iid\ process (in which each coordinate consists of a sequence of independent variables), where the isomorphism satisfies the claimed properties above.
\end{proof}

%
%
\section{Entropy-efficient finitary codings for $\Z$-processes}\label{sec:sequential}

In this section, we use \cref{thm:pro-dependent} to deduce the following relative version of \cref{thm:main} for $\Z$-processes:

\begin{thm}\label{thm:realtive_efficient_finitary_coding_Z}
	Let $X$ be a countable-valued $\Z$-process which is a finitary factor of \iid$\times W$. Then for any $\epsilon >0$ there exists a process $X'$ with $H(X'_0) \le \epsilon$ such that $(X,X')$ is finitarily isomorphic to \iid\ relative to $W$. Moreover, when $h(X \mid W) < \infty$, the latter \iid\ process can be taken to be finite-valued, and in particular, there exists a finite-valued \iid\ process $Y$ with $h(Y) \le h(X \mid W) + \epsilon$ such that $X$ is a finitary factor of $Y \times W$.
\end{thm}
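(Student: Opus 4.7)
The plan is to reduce to \cref{thm:pro-dependent}: I will construct an iid process $X'$ (independent of the iid witness of the hypothesis and of $W$) with $H(X'_0) \le \epsilon$ such that $(X, X')$ is finitarily pro-dependent relative to $W$. Applying \cref{thm:pro-dependent} to $(X, X')$ and any iid $Z$ independent of $W$ with $h(Z) = h(X \mid W) + H(X'_0)$ then yields the desired relative finitary isomorphism. For the ``moreover'' clause, when $h(X \mid W) < \infty$ the process $Z$ is a countable-valued iid (in particular finitely dependent) of finite entropy, so \cref{thm:smorodinsky_k_dependent} gives a finitary isomorphism between $Z$ and a finite-valued iid $Y$ of the same entropy; composing, $X$ becomes a finitary factor of $Y \times W$ with $h(Y) \le h(X \mid W) + \epsilon$. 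I assume throughout that $W$ is aperiodic and ergodic, handling the remaining cases as outlined after \cref{thm:relative_smorodinsky_k_dependent}.

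Writing $X = \varphi(U, W)$ with $U$ iid and $\varphi$ stop-finitary, let $R_n$ denote the almost surely finite coding radius at $n$. I would take $X'$ to be iid, independent of $(U, W)$, with $X'_n$ encoding a sequence of nested Bernoulli marker values $M^{(1)}_n \ge M^{(2)}_n \ge \cdots$ so that each $M^{(k)}$ is iid with density $\alpha_k = \beta \cdot 2^{-k}$. For $\beta = \beta(\epsilon)$ sufficiently small, $H(X'_0) \le \sum_k h(\alpha_k) \le \epsilon$, and the marker processes form a nested chain $M^{(1)} \supseteq M^{(2)} \supseteq \cdots$ in the sense of their sets of $1$s.

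As the pro-dependent witness I would take $Z^{(k)}$ to be the partial process of $(X, X')$ that reveals $M^{(1)}, \ldots, M^{(k)}$ everywhere and reveals $X_n$ at each site $n$ with $[n - R_n, n + R_n] \subseteq I^{M^{(k)}}_n$. Nestedness of the markers makes the sequence monotone, and because $|I^{M^{(k)}}_0|$ has mean of order $1/\alpha_k \to \infty$ while $R_n$ is finite, $Z^{(k)} \to (X, X')$ almost surely. Conditionally on $(W, M^{(k)})$, the $U$-values in distinct $M^{(k)}$-blocks are independent, and each revealed $X_n$ depends only on $(U,W)$ within its block, so $Z^{(k)}$ is block-dependent relative to $(W, M^{(k)})$ with block-length process $K_n = |I^{M^{(k)}}_n|$, a finitary factor of $X'$. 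This yields the finitary $K$-dependence of $Z^{(k)}$ relative to $(W, Z^{(1)}, \ldots, Z^{(k-1)})$.

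The main obstacle will be verifying that $Z^{(k)}$ is a finitary factor of $(W, X, X')$, rather than merely of $(W, U, X')$: the reveal condition is phrased in terms of $R_n$, which is a finitary function of $(U, W)$ but not obviously of $(X, W)$. My plan is to replace the condition $[n - R_n, n + R_n] \subseteq I^{M^{(k)}}_n$ by an equivalent certificate expressible from $(X, W, X')$, namely that the restriction of $X$ to $I^{M^{(k)}}_n$, together with $W$ and $M^{(k)}$, already witnesses via the stop-finitary structure of $\varphi$ that the value $X_n$ is forced regardless of how the $U$-data outside the block is completed. Since this certificate is almost surely equivalent to the original condition, the corresponding partial processes still monotonically exhaust $(X, X')$ and retain block-dependence, and their finitary factor property becomes a direct consequence of the stop-finitary structure of $\varphi$. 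With this in place, \cref{thm:pro-dependent} supplies the relative finitary isomorphism and completes the proof.
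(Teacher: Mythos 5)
Your reduction to \cref{thm:pro-dependent} is the right strategy, and the ``moreover'' clause via \cref{thm:smorodinsky_k_dependent} is fine. But the choice of $X'$ and the way you try to salvage the finitaryness of the witnesses is where the argument breaks.

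You take $X'$ to be an independent iid process encoding nested markers, and you reveal $X_n$ when the coding radius fits the $M^{(k)}$-block. As you yourself observe, this reveal condition is stated in terms of $R_n$, which is a finitary function of $(U,W)$ but \emph{not} of $(X,W,X')$ -- and this is not a technicality you can argue around: Gabor's counterexample (discussed in the paper right before \cref{prop:finitary_limit_of_finitarily_dependent}) shows that $X$ itself need not be finitarily pro-dependent relative to $W$, precisely because the coding-length information is genuinely extra. Your proposed ``certificate'' does not repair this. As stated -- ``the restriction of $X$ to $I^{M^{(k)}}_n$, together with $W$ and $M^{(k)}$, witnesses that $X_n$ is forced regardless of how the $U$-data outside the block is completed'' -- it is either vacuous, because $X_n$ is itself part of $X|_{I^{M^{(k)}}_n}$, or, if you reinterpret it as ``$X_n$ is determined by $U|_{\text{block}}$ and $W$'', it is a statement about the unobserved $U$ and is not a function of $(X,W,X')$. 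In either reading it is not ``almost surely equivalent'' to $[n-R_n,n+R_n]\subseteq I^{M^{(k)}}_n$: there are configurations where $X|_{\text{block}}$ forces $X_n$ through long-range constraints on $U$ even though $R_n$ is large, and revealing at such sites would wreck the $M^{(k)}$-block-dependence you need (since the underlying $U$ that determines $X_n$ leaks into neighboring blocks). So the step ``their finitary factor property becomes a direct consequence of the stop-finitary structure of $\varphi$'' is exactly the gap.

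The paper's fix is simpler and goes in a different direction: take $X'$ to be (a) coding length process $R$ itself, with $H(R_0)\le\epsilon$ (such an $R$ always exists), and show that the \emph{pair} $(X,R)$ is finitarily pro-dependent relative to $W$ (this is \cref{prop:finitary_limit_of_finitarily_dependent}). The point is that the reveal condition ``$[i-R_i-1,i+R_i]\subset I^{M^{(n)}}_i$'' is then a finitary function of $(W,X,R)$ trivially, because $R_i$ is part of the revealed data and $M^{(n)}$ is taken to be a finitary dilution of a marker process of $W$ (via \cref{lem:low-density-markers}), not an independent iid. The partial processes $X^{(n)}_i=(X_i,R_i)$ on the good sites then are manifestly finitary $M^{(n)}$-block factors of $(Y,W)$, since $R$ is a coding length for $(X,R)$ as well. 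A secondary consequence: by using markers that are factors of $W$ rather than independent, one avoids having to enlarge the reference process, which your construction would require. The periodic-$W$ case is handled by adjoining a low-entropy independent iid $V$ to $W$ and then reabsorbing it at the end.
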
 

The theorem will follow easily from \cref{thm:pro-dependent} and the following result. Given a finitary factor $X$ of $Y$, we say that an $\N$-valued process $R$ is a \textbf{coding length process for $X$ by $Y$} if $R_0$ is an almost surely finite stopping time with respect to the filtration $\cF=(\cF_n)_{n \ge 0}$ defined by $\cF_n := \sigma(\{Y_i : |i| \le n\})$, and $X_0$ is $\cF_{R_0}$-measurable (recall the definition of stop-finitary). Observe that one can always find a coding length process $R$ with $H(R_0)$ arbitrarily small. Recall the definition of finitarily pro-dependent from before \cref{thm:pro-dependent}.
 
%

\begin{prop}\label{prop:finitary_limit_of_finitarily_dependent}
	Let $X$ be a finitary factor of $(W,Y)$, where $W$ is an aperiodic ergodic process and $Y$ is an \iid\ process independent of $W$. Let $R$ be a coding length process for $X$ by $(W,Y)$. Then $(X,R)$ is finitarily pro-dependent relative to $W$.
\end{prop}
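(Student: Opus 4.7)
The plan is to construct partial processes $(X,R)^{(n)}$ of $(X,R)$ increasing to $(X,R)$ by exploiting a nested block structure coming from a sequence of sparse marker processes in $W$. The idea is to reveal $(X_i,R_i)$ only when the entire coding window $[i-R_i,i+R_i]$ lies inside the current block, so that the reveal ``sees'' only $(W,Y)$ inside that block; block-independence for $Y$ will then deliver the required conditional-independence structure.

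Since $W$ is aperiodic ergodic, it admits a non-trivial ergodic aperiodic occurrence marker process $M^{(1)}$ as a stop-finitary factor. Iteratively applying \cref{lem:low-density-markers}, one obtains stop-finitary marker processes $M^{(1)} \ge M^{(2)} \ge \cdots$, each a finitary dilution of the previous, with densities $\rho_n$ summable (say $\rho_n \le 2^{-n}$). A Borel--Cantelli argument then shows that almost surely the distance from any fixed site $i$ to the nearest $M^{(n)}$-marker tends to infinity as $n \to \infty$. Define
\[
 (X,R)^{(n)}_i := \begin{cases}
  (X_i,R_i) & \text{if } R_i \le n \text{ and } [i-R_i,\,i+R_i] \subseteq I^{M^{(n)}}_i, \\
  \star & \text{otherwise.}
 \end{cases}
\]
Each $(X,R)^{(n)}$ is a stop-finitary factor of $(W,X,R)$; the sequence is monotone because $R_i \le n \le n+1$ and $I^{M^{(n)}}_i \subseteq I^{M^{(n+1)}}_i$ (dilutions enlarge blocks); and almost sure convergence $(X,R)^{(n)} \to (X,R)$ holds since $R_i<\infty$ and the distance to the nearest $M^{(n)}$-marker tends to infinity.

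The crucial step is to verify that $(X,R)^{(n)}$ is finitarily dependent relative to $(W,(X,R)^{(1)},\dots,(X,R)^{(n-1)})$. The key observation is that, thanks to the window-containment constraint, the restriction of $(X,R)^{(n)}$ to any block $I^{M^{(n)}}_k$ is a measurable function of $(W,Y)|_{I^{M^{(n)}}_k}$: in the revealed case $(X_i,R_i)$ is determined by $(W,Y)_{[i-R_i,i+R_i]} \subseteq I^{M^{(n)}}_k$, and in the $\star$ case the indicator is a $\cF_{\min(n,d_n(i))}$-event, where $d_n(i)$ denotes the distance from $i$ to the nearest $M^{(n)}$-marker, so the associated window also lies inside $I^{M^{(n)}}_k$. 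The same reasoning applies verbatim to each earlier $(X,R)^{(k)}$, $k<n$, because $M^{(k)}$-blocks are sub-intervals of $M^{(n)}$-blocks. Hence the joint $\big((X,R)^{(1)},\dots,(X,R)^{(n)}\big)|_{I^{M^{(n)}}_k}$ is a function of $(W,Y)|_{I^{M^{(n)}}_k}$. Since $Y$ is i.i.d.\ and independent of $W$, the families $\big\{(W,Y)|_{I^{M^{(n)}}_k}\big\}_k$ are mutually independent given $W$, and consequently the blocks $(X,R)^{(n)}|_{I^{M^{(n)}}_k}$ are mutually independent given $(W,(X,R)^{(<n)})$. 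Setting $K_n(i):=|I^{M^{(n)}}_i|$ (a stop-finitary factor of $W$), whenever $|a-b|>\max\{K_n(a),K_n(b)\}$ the sites $a$ and $b$ lie in distinct $M^{(n)}$-blocks, yielding $K_n$-dependence relative to $(W,(X,R)^{(<n)})$; the conditional distributions $\cL((X,R)^{(n)}_{[j-m,j+m]} \mid W,(X,R)^{(<n)})$ depend only on $(W,(X,R)^{(<n)})$ restricted to the finitely many $M^{(n)}$-blocks intersecting $[j-m,j+m]$, giving the finitary property.

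The main obstacle is to organize the marker sequence $(M^{(n)})$ so that nested dilution (for monotonicity), stop-finitaryness (for the factor requirement), and rapid enough density decay (for the almost sure convergence to $(X,R)$) all hold simultaneously; this is reconciled by iterating \cref{lem:low-density-markers} with summable densities and invoking Borel--Cantelli to guarantee that the distance to the nearest marker grows unboundedly at every site.
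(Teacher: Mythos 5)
Your proposal is correct and follows essentially the same route as the paper's proof. You use the same key idea: build a decreasing (nested) sequence of sparse marker processes $(M^{(n)})$ as stop-finitary factors of $W$, reveal $(X_i,R_i)$ only when the coding window around $i$ fits inside the current block $I^{M^{(n)}}_i$, and then exploit the fact that the resulting partial processes are $M^{(n)}$-block factors of $(W,Y)$, hence $M^{(n)}$-block-dependent given $W$ and the earlier levels, since $Y$ is i.i.d. Two small remarks: (i) the extra clause ``$R_i\le n$'' in your definition of $(X,R)^{(n)}$ is harmless but superfluous, as the growth of the blocks already forces $(X,R)^{(n)}_i \to (X_i,R_i)$; and (ii) you do not need Borel--Cantelli or summable densities --- because the marker processes are nested dilutions, the events ``some $M^{(n)}$-marker lies within distance $d$ of $i$'' are decreasing in $n$ with probabilities tending to $0$, which already gives the a.s.\ convergence. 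Also take care with the exact form of the window condition: with the paper's convention of including each marker in the block to its right, the correct containment is $[i-R_i-1,\,i+R_i]\subset I^{M^{(n)}}_i$, equivalently $M^{(n)}_j=0$ for all $|j-i|\le R_i$; your $[i-R_i,i+R_i]\subseteq I^{M^{(n)}}_i$ is off by one at the left endpoint (it admits $M^{(n)}_{i-R_i}=1$). None of these affects the substance of the argument.
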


We remark that one cannot hope to prove that $X$ itself is finitarily pro-dependent relative to $W$, since (as shown by Gabor \cite{gabor2019finitary}) there exist finitary factors of \iid\ which are not finitarily isomorphic to \iid\ (and hence not finitarily pro-dependent relative to an independent \iid\ process).
Before giving the proof of the proposition, let us illustrate what could go wrong when trying to show that $(X,R)$ is finitarily pro-dependent relative to $W$. A ``natural candidate'' for  the sequence of partial processes $X^{(n)}$ witnessing finitary pro-dependence could be
\[ X^{(n)}_i := \begin{cases} (X_i,R_i) &\text{if }R_i \le n \\ \star &\text{otherwise} \end{cases} .\]
While this definition yields that each $X^{(n)}$ is a block factor of $(X,R)$, and also even a block factor of \iid, it may fail to satisfy that $X^{(n)}$ is finitarily dependent relative to $(W,X^{(1)},\dots,X^{(n-1)})$. Indeed, $X^{(n)}$ may even fail to be a finitary factor of an \iid\ process relative to $(W,X^{(1)},\dots,X^{(n-1)})$.
For example, let $Y$ be any non-trivial $\{0,1\}$-valued \iid\ process and let $X$ be the $\{0,1,2\}$-valued block factor of $Y$ defined by $X_i=2$ if $Y_i = Y_{i+1}$ and $X_i=Y_{i+2}$ otherwise. Note that $R_i=1$ when $Y_i=Y_{i+1}$ and that $R_i=2$ otherwise. It follows that $Y$ is a 2-to-1 extension of $X^{(1)}$.
In particular, $X^{(2)}$ is not a finitary factor of \iid\ relative to $X^{(1)}$.

\begin{proof}[Proof of \cref{prop:finitary_limit_of_finitarily_dependent}]

Let $M^{(n)}$ be a decreasing sequence of marker processes with density tending to 0, all of which are finitary factors of $W$ (e.g., using \cref{lem:low-density-markers}).
Define $X^{(n)}$ by
\[ X^{(n)}_i := \begin{cases} (X_i,R_i) &\text{if }M^{(n)}_j=0\text{ for all $j$ such that }|j-i| \le R_i\\ \star &\text{otherwise} \end{cases} .\]
Clearly, $X^{(1)},X^{(2)},\ldots$ is a sequence of partial processes increasing to $(X,R)$, and each $X^{(n)}$ is a finitary factor of $(W,X,R)$. It remains to show that each $X^{(n)}$ is finitarily dependent relative to $(W,X^{(1)},\dots,X^{(n-1)})$. In fact, we will show that $X^{(n)}$ is finitarily $M^{(n)}$-block-dependent relative to $(W,X^{(1)},\dots,X^{(n-1)})$.
Note that $X^{(n)}_i = \star$ if and only if $[i-R_i-1,i+R_i] \not\subset I^{M^{(n)}}_i$ (in particular, $X^{(n)}_i=\star$ whenever $M^{(n)}_i=1$).
It follows from this that $X^{(n)}$ is a finitary $M^{(n)}$-block factor of $Y$ relative to $W$ (since $R$ is also a coding length process for $(X,R)$ by $(W,Y)$). In particular, $(X^{(1)},\dots,X^{(n)})$ is a finitary $M^{(n)}$-block factor of \iid\ relative to $W$. It follows that $X^{(n)}$ is finitarily $M^{(n)}$-block-dependent relative to $(W,X^{(1)},\dots,X^{(n-1)})$.
\end{proof}

\begin{proof}[Proof of \cref{thm:realtive_efficient_finitary_coding_Z}]
If $W$ is aperiodic then the theorem follows immediately from \cref{prop:finitary_limit_of_finitarily_dependent} and \cref{thm:pro-dependent} (using a coding length process $R$ with $H(R_0) \le \epsilon$).
If $W$ is periodic, then we take an \iid\ process $V$ of low entropy, independent of everything else, and apply \cref{prop:finitary_limit_of_finitarily_dependent} and \cref{thm:pro-dependent} to deduce that $(X,R)$ is finitarily isomorphic to \iid\ relatively to $(W,V)$. It then follows from this that $(X,R,V)$ is finitarily isomorphic to \iid\ relatively to $W$.
\end{proof}

\begin{cor}\label{cor:relatively_finitary_iid}
	Let $W$ be an aperiodic ergodic process and let $X$ be a countable-valued process.
	Then $X$ is finitarily isomorphic to \iid\ relative to $W$ if and only if there is a finitary factor $X'$ of $(X,W)$ such that $(X,X')$ is finitarily pro-dependent relative to $W$.
\end{cor}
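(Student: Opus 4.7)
The plan is to reduce each direction of the biconditional to results already established in this section. For the ``only if'' direction I would invoke \cref{prop:finitary_limit_of_finitarily_dependent}, and for the ``if'' direction I would combine \cref{thm:pro-dependent} with the elementary observation that appending a finitary factor is itself a finitary isomorphism.

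For the forward direction, assume $X$ is finitarily isomorphic to \iid\ relative to $W$, witnessed by a finitary isomorphism $\pi$ between $(X,W)$ and $(Y,W)$, where $Y$ is an \iid\ process independent of $W$. In particular, $X$ is a finitary factor of $(W,Y)$, so I can choose a coding length process $R$ for $X$ by $(W,Y)$; such an $R$ always exists (and may even be taken with arbitrarily small single-site entropy). \cref{prop:finitary_limit_of_finitarily_dependent} then gives that $(X,R)$ is finitarily pro-dependent relative to $W$. The one subtlety is that the corollary requires $X' := R$ to be a finitary factor of $(X,W)$, not merely of $(W,Y)$; this is where the finitary invertibility of $\pi$ is used, since $(W,Y)$ is itself a finitary factor of $(X,W)$ via $\pi^{-1}$, hence so is $R$.

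For the backward direction, suppose $X'$ is a countable-valued finitary factor of $(X,W)$ such that $(X,X')$ is finitarily pro-dependent relative to $W$. Set $h := h((X,X') \mid W)$, which equals $h(X \mid W)$ because $X'$ is a factor of $(X,W)$, and let $Y$ be a countable-valued \iid\ process, independent of $W$, with $H(Y_0) = h$; such a $Y$ exists for every value $h \in [0,\infty]$. Then $Y$ is trivially $0$-dependent relative to $W$ and in particular finitarily pro-dependent relative to $W$, with $h(Y \mid W) = H(Y_0) = h = h((X,X') \mid W)$, so \cref{thm:pro-dependent} yields a finitary isomorphism between $(X,X')$ and $Y$ relative to $W$. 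Since $X'$ is a finitary factor of $(X,W)$, the map $X \mapsto (X,X')$ is itself a finitary isomorphism from $X$ to $(X,X')$ relative to $W$, with inverse the first-coordinate projection; composing the two produces the required finitary isomorphism from $X$ to $Y$ relative to $W$.

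The main conceptual obstacle is in the forward direction, and is fully absorbed by \cref{prop:finitary_limit_of_finitarily_dependent}: the analogous statement for $X$ itself (in place of $(X,R)$) is false by Gabor's result, so the inclusion of a coding length process as the extra coordinate $X'$ is essential. Everything else is routine bookkeeping with the closure of finitary factors and finitary isomorphisms under composition.
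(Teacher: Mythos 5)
Your proof is correct and follows essentially the same route as the paper: both directions are reduced to \cref{prop:finitary_limit_of_finitarily_dependent} and \cref{thm:pro-dependent} in the same way, with the same observation that the coding length process $R$ is a finitary factor of $(X,W)$ because $Y$ is. The only difference is that you spell out the entropy-matching step (constructing $Y$ with $H(Y_0)=h((X,X')\mid W)$ and checking $Y$ is finitarily $0$-dependent relative to $W$) which the paper leaves implicit when invoking \cref{thm:pro-dependent}.
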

\begin{proof}
If $(X,X')$ is finitarily pro-dependent relative to $W$, then it is finitarily isomorphic to \iid\ relative to $W$ by \cref{thm:pro-dependent}, and if $X'$ is a finitary factor of $(X,W)$, then $X$ and $(X,X')$ are finitarily isomorphic relative to $W$.

Suppose that $X$ is finitarily isomorphic to an \iid\ process $Y$ relative to $W$.
It suffices to show that any coding length process $R$ for $X$ by $(Y,W)$ is a finitary factor of $(X,W)$, since \cref{prop:finitary_limit_of_finitarily_dependent} will then imply that $(X,R)$ is finitarily pro-dependent relative to $W$.
Indeed, it is immediate from the definition that $R$ is a finitary factor of $(Y,W)$, and since $Y$ is a finitary factor of $(X,W)$, we see that $R$ is also a finitary factor of $(X,W)$.
\end{proof}

\section{Nice permutation groups}\label{sec:nice-groups} 
Having established a version of our main result for $\Z$-processes, we now move on to consider the more general $(\V,\Gamma)$-processes, where $\Gamma$ is a group of permutations on a countable set $\V$.

 We say that $\Gamma$ is \textbf{semi-nice} if the following hold:
\begin{enumerate}
	\item $\Gamma$ acts transitively on $\V$.
	\item Stabilizers of $\Gamma$ have finite orbits.
\end{enumerate}
We say that $\Gamma$ is \textbf{nice} if it furthermore satisfies the following:
\begin{enumerate}
	\setcounter{enumi}{2} 
	\item $\Gamma$ is  unimodular.
	\item Every non-trivial \iid\ $(\V,\Gamma)$-process is aperiodic.
\end{enumerate}

Let us explain the notions in the definition. The group $\Gamma$ acts transitively on $\V$ if for any $u,v \in \V$ there exists $g \in \Gamma$ such that $g(u)=v$. By stabilizers having finite orbits, we mean that $|\Gamma_v w|<\infty$ for every $v,w \in \V$, where
\[ \Gamma_v := \{ g \in \Gamma : g(v)=v \} \]
is the \textbf{stabilizer} of $v$. This is equivalent to the condition that the stabilizers have compact closures, by which we mean the closure $\overline\Gamma_v$ of $\Gamma_v$ in $\V^\V$ (with respect to the topology of pointwise convergence, where $\V$ is discrete) is compact for every $v \in \V$. 
From this it follows that condition (2) implies that the closure $\overline{\Gamma}$ of $\Gamma$ in $\V^\V$ is a locally compact group. 
Suppose now that $\Gamma$ is semi-nice.
We say that $\Gamma$ is \textbf{unimodular} if $|\Gamma_v u| = |\Gamma_u v|$ for every $u,v \in \V$. While this definition is given in terms of the action of $\Gamma$ on $\V$, by standard arguments~(see e.g.\ \cite{lyons2017probability}\footnote{The groups $\Gamma$ considered in \cite{lyons2017probability} are automorphisms of a connected locally finite graph, but the arguments there apply to any semi-nice permutation group.}), it is equivalent to the classical condition that there is a Radon measure $m_{\overline{\Gamma}}$ on the Borel subsets of $\overline\Gamma$ that is invariant with respect to multiplication by elements of $\overline{\Gamma}$ from the right and from the left.
In this case, the measure $m_{\overline{\Gamma}}$ is unique up to scaling and is called \textbf{Haar measure} on $\overline{\Gamma}$.
We say that a $(\V,\Gamma)$-process $Y = (Y_v)_{v \in \V}$ is \textbf{aperiodic} if the action of $\Gamma$ on $Y$ is essentially free, meaning that, almost surely, no element of $\Gamma$ other than the identity fixes $Y$.

We say that a semi-nice permutation group $\Gamma$ is \textbf{amenable} if its action on $\V$ is amenable in the sense of Greenleaf~\cite{greenleaf69Amenable}.
This means that there is a $\Gamma$-invariant mean on $\V$, or equivalently, that there exists a F{\o}lner sequence in $\V$ (see \cref{sec:entropy}).
We mention that a semi-nice amenable permutation group need not be amenable as an abstract group, but its closure is amenable as a locally compact group. Moreover, a semi-nice permutation group $\Gamma$ is amenable (in the sense defined above) if and only if $\overline\Gamma$ is amenable as a locally compact group. This follows from the fact that the stabilizers are compact, hence amenable as topological groups. See for instance \cite{glasner2007amenable}.


Motivating examples for nice permutation groups examples are the following:

\begin{itemize}
	\item $\V$ itself is a countable group, and $\Gamma$ is the group of permutations corresponding to multiplication from the left. This is equivalent to saying that the action of $\Gamma$ on $\V$ is transitive and free. The stabilizers are trivial, so that their orbits are singletons and $\Gamma$ is unimodular. Also, $\Gamma$ is discrete (as a subspace of $\V^\V$) so $\Gamma =\overline{\Gamma}$.
	\item Let $G$ be a graph with vertex set $\V$. We say that $G$ has \textbf{uniquely centered balls} if for every two distinct vertices $v$ and $w$ and every $r>0$, the ball of radius $r$ centered around $v$ does not coincide with the ball of radius $r$ centered around $w$. 
	Now if $G$ is a locally finite, connected  vertex-transitive unimodular graph having uniquely centered balls, and $\Gamma$ is the group of automorphism of the graph $G$ (namely, permutations of $\V$ that send edges to edges), then $\Gamma$ is a a nice permutation group. This case has been  considered in \cite{spinka2019finitely}. We remark that the uniquely centered balls assumption can be replaced by slightly weaker conditions  (see, e.g.,  \cite[(24)]{haggstrom2002coupling}). For an amenable graph, the group of automorphisms is amenable, and any  transitive subgroup of the automorphisms is unimodular \cite{salvatori1992,soardi1990amenability}. Thus, when $G$ as above is also amenable, any group of automorphisms that acts transitively on the vertices is a nice amenable group. 
In particular, it is straightforward to deduce the aperiodicity property from the uniquely centered balls condition; see for instance the proofs of Lemma~4.5 in \cite{haggstrom2002coupling} and Lemma~5.1 in \cite{spinka2019finitely}. 
\end{itemize}


\begin{remark}\label{rem:semi_nice_locally_compact}
In various contexts, it is natural to require the group $\Gamma$ to be closed. For our purposes, this is not needed, but can be assumed when convenient due to the following observations:
Every $(\V,\Gamma)$-process is also a $(\V,\overline{\Gamma})$-process (and vice versa), and a (finitary) factor map between $(\V,\Gamma)$-processes is also a (finitary) factor map between $(\V,\overline{\Gamma})$-processes (and vice versa). Moreover, each of the conditions (1) and (2) holds for the group $\Gamma$ if and only if it holds for its closure $\overline{\Gamma}$. Thus, $\Gamma$ is semi-nice if and only if $\overline\Gamma$ is. For semi-nice groups, each of the conditions (3) and (4) holds for $\Gamma$ if and only if it holds for $\overline{\Gamma}$. These observations show that, for most purposes, we can replace $\Gamma$ by $\overline{\Gamma}$.
\end{remark}

\subsection{Aperiodicity and random total orders}

Recall that a $(\V,\Gamma)$-process is said to be aperiodic if the action of $\Gamma$ on it is essentially free. 
For a  $\Z$-process $Y$, aperiodicity means that the probability that there exists an integer $p \ge 1$ such that $Y_{n+p} = Y_n$ for all $n \in \Z$ is zero.
 It will be useful for us to reformulate this in a similar manner as for $\Z$-processes. This is the content of the following lemma, which, informally, says that aperiodicity is equivalent to the ``configuration seen from'' each $v \in \V$ being distinct.

For $u,v \in \V$, denote
\[ \Gamma_{u,v} := \big\{g \in \Gamma : g(u)= v\big\}.\]
For $F \subset \Gamma$ and $y \in \cA^\V$, denote
\[ F Y := \big\{ g(Y) :  g \in F \big\} .\]

\begin{lemma}\label{lem:aperiodic}
Let $\Gamma$ be a semi-nice permutation group and let $Y$ be a $(\V,\Gamma)$-process. Then $Y$ is aperiodic if and only if $\Gamma_{u,v} Y \neq \Gamma_v Y$ almost surely for all $u,v \in \V$ with $u \neq v$.
\end{lemma}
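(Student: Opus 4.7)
The plan is to reformulate the event $\Gamma_{u,v}Y = \Gamma_v Y$ in terms of elements of $\Gamma_{u,v}$ fixing $Y$, after which both directions of the lemma follow with little work.

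The key algebraic observation is that $\Gamma_{u,v}$ is a right coset of the stabilizer $\Gamma_v$: for any (hence every) $g \in \Gamma_{u,v}$, one has $\Gamma_{u,v} = \Gamma_v g$. Indeed, $h \in \Gamma_v$ gives $hg(u) = h(v) = v$, and conversely any $g' \in \Gamma_{u,v}$ satisfies $g'g^{-1}(v) = g'(u) = v$, so $g'g^{-1} \in \Gamma_v$. Consequently $\Gamma_{u,v}Y = \Gamma_v(gY)$ is the $\Gamma_v$-orbit of $gY$ in $\cA^\V$. Since two $\Gamma_v$-orbits are either disjoint or equal, $\Gamma_{u,v}Y = \Gamma_v Y$ holds iff $gY \in \Gamma_v Y$, iff some $h \in \Gamma_v$ has $hgY = Y$. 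Noting $hg(u) = h(v) = v$, i.e.\ $hg \in \Gamma_{u,v}$, I would record the clean reformulation: $\Gamma_{u,v}Y = \Gamma_v Y$ if and only if some $k \in \Gamma_{u,v}$ fixes $Y$ in the sense that $kY = Y$.

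With this reformulation the lemma becomes: $Y$ is aperiodic iff for every $u \ne v$, almost surely no $k \in \Gamma_{u,v}$ fixes $Y$. The forward direction is immediate from a single almost-sure event: aperiodicity says a.s.\ only $k = 1_\Gamma$ fixes $Y$, but any $k \in \Gamma_{u,v}$ with $u \ne v$ sends $u$ to $v \ne u$ and so is non-trivial, hence cannot fix $Y$ on this event. For the converse, I would use the countability of $\V \times \V$ to take the union over pairs $u \ne v$ of the given null sets, producing a single null set $N$. On the complement of $N$, if some $g \in \Gamma \setminus \{1_\Gamma\}$ fixed $Y$, then choosing $u$ with $g(u) \ne u$ and setting $v := g(u)$ would give $g \in \Gamma_{u,v}$ with $u \ne v$ fixing $Y$, a contradiction; hence $Y$ is aperiodic.

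The main point of mild concern is measurability of the event $\{\exists k \in \Gamma_{u,v} : kY = Y\}$ when $\Gamma_{u,v}$ is uncountable. Here I would invoke semi-niceness: the stabilizer $\Gamma_v$ has compact closure in $\V^\V$ (with the topology of pointwise convergence on the discrete set $\V$), which is second countable, and the action on $\cA^\V$ is continuous. By \cref{rem:semi_nice_locally_compact}, we can pass to the closure and then choose a countable dense subset of $\Gamma_v$ (and thus of the coset $\Gamma_{u,v}$) to represent the event as a countable union of measurable events $\{kY = Y\}$. This is the only technical step, but it is a routine consequence of the hypotheses, so the substance of the proof is the coset identity in the first paragraph together with the countable union argument.
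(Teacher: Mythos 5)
Your proof is correct in substance and follows essentially the same approach as the paper's: the key step in both is the coset identity $\Gamma_{u,v} = \Gamma_v g$, which converts the orbit equality $\Gamma_{u,v}Y = \Gamma_v Y$ into the existence of an element of $\Gamma_{u,v}$ fixing $Y$; the paper packages the remainder deterministically via the random stabilizer $\mathsf\Gamma := \{g\in\Gamma : g(Y)=Y\}$ while you separate the two directions, but this is only a difference of exposition.

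One flag on your measurability aside, which is not part of the paper's proof and is not needed for yours either. You propose to express $\{\exists k \in \Gamma_{u,v} : kY=Y\}$ as a countable union of $\{kY=Y\}$ over a dense subset $D$ of $\overline{\Gamma}_{u,v}$; this does not work, because for fixed $y$ the set $\{k : ky=y\}$ is \emph{closed} in $\overline\Gamma$ and can be non-empty while entirely avoiding $D$. What compactness of $\overline{\Gamma}_{u,v}$ actually gives is that the projection of the closed set $\{(k,y):ky=y\}\cap\bigl(\overline{\Gamma}_{u,v}\times\cA^\V\bigr)$ onto $\cA^\V$ is closed (projection along a compact factor is a closed map), hence Borel; equivalently, one can write the event as $\bigcap_{n}\bigcup_{k\in D}\{d(kY,Y)<1/n\}$, using openness of the inner condition in $k$ and a compactness argument to pass to the intersection. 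In any case this concern is implicit already in the lemma's own statement (and in the definition of aperiodicity), and your reformulation shows the two events are literally equal, so the measurability of one is inherited from the other; the dense-subset reduction as stated, though, is not a valid justification.
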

\begin{proof}
Let $\mathsf \Gamma < \Gamma$ be the random subgroup of $\Gamma$ consisting of those $g\in \Gamma$ for which $g(Y)=Y$. Aperiodicity of $Y$ is equivalent to the statement that $\mathsf \Gamma$ is trivial almost surely.
Thus, it suffices to show the deterministic statement that $\Gamma_{u,v} Y \neq \Gamma_v Y$ for all $u \neq v$ if and only if $\mathsf\Gamma$ is trivial. Indeed, since $\Gamma_{g(v),v} = \Gamma_v g^{-1}$ for any $v\in\V$ and $g\in\Gamma$, the former holds if and only if $g^{-1}(Y) \notin \Gamma_v Y$ for all $v \in \V$ and $g \notin \Gamma_v$, which holds if and only if $\mathsf\Gamma \cap g \Gamma_v =\emptyset$ for all $v \in \V$ and $g \notin \Gamma_v$, which in turn is equivalent to $\mathsf\Gamma$ being trivial.
\end{proof}

Let us discuss condition (4) in the definition of a nice permutation group, namely, that every non-trivial \iid\ process is aperiodic.
Our only use of this assumption will be through the following result which shows that any aperiodic $(\V,\Gamma)$-process admits a total order on $\V$ as a finitary factor. The constructed total order will be of the following particular form: For a $\{0,1\}^\N$-valued process $Z$, we denote by $\prec_Z$ the partial order on $\V$ induced by the lexicographical order on $\{0,1\}^\N$, i.e., $u \prec_Z v$ if and only if $Z_u$ is lexicographically smaller than $Z_v$. Clearly, $\prec_Z$ is a total order if and only if $Z_u \neq Z_v$ for all $u \neq v$. We call any total order of this type a \textbf{bitwise total order}.
We say that a process $Y$ admits a \textbf{finitary bitwise total order} if there exists a bitwise total order $\prec_Z$ for a process $Z$ such that $(Z_v(n))_{v \in \V}$ is a finitary $\Gamma$-factor of $Y$ for every $n \in \N$ 
(when $Y$ takes values in a discrete countable set, this is the same as saying that $Z$ is a topo-finitary factor of $Y$). Note, in particular, that in this case the map sending $Y$ to $\prec_Z$ is finitary in the sense that for every $v,w \in \V$ almost surely there exists a (random) finite $F \subset \V$ such that $Y_F$ determines the event ${\{v \prec_Z w\}}$.

\begin{lemma}\label{lem:inv_total_order}
Let $\Gamma$ be a semi-nice permutation group.
Then any finite-valued aperiodic $(\V,\Gamma)$-process $Y$ admits a finitary bitwise total order on $\V$.
\end{lemma}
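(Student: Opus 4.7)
The plan is to construct, as a finitary $\Gamma$-factor of $Y$, a $\{0,1\}^\N$-valued process $Z=(Z_v)_{v\in\V}$ whose values are almost surely pairwise distinct; the order $\prec_Z$ then witnesses the lemma.

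First I would fix a reference vertex $v_0 \in \V$ and, using semi-niceness, enumerate the orbits of the stabilizer $\Gamma_{v_0}$ on $\V$ as $O_0 = \{v_0\}, O_1, O_2, \ldots$, each finite. Set $F_n := O_0 \cup \cdots \cup O_n$, an increasing $\Gamma_{v_0}$-invariant exhaustion of $\V$ by finite sets. The action of $\Gamma_{v_0}$ on $F_n$ factors through a finite permutation group, so the quotient $\cA^{F_n}/\Gamma_{v_0}$ is finite; choose, once and for all, an injective encoding of each $\Gamma_{v_0}$-orbit in $\cA^{F_n}$ as a bit string of some length $\ell_n$.

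Next, for each $v \in \V$ pick any $g_v \in \Gamma$ with $g_v(v_0) = v$ and let $\Phi_n(v)$ be the encoding of the $\Gamma_{v_0}$-orbit of $g_v^{-1}(Y)|_{F_n}$. Since the encoding is $\Gamma_{v_0}$-invariant, $\Phi_n(v)$ is independent of the choice of $g_v$, and $(v,Y) \mapsto \Phi_n(v)$ is $\Gamma$-equivariant. Interleave the bit strings $(\Phi_n(v))_{n \ge 0}$ into a single $Z_v \in \{0,1\}^\N$. By construction each bit of $Z_v$ depends on $Y$ only through the finite set $g_v(F_n)$, so $(Z_v(k))_{v \in \V}$ is a finitary $\Gamma$-factor of $Y$ for every $k$, and the full assignment $Y \mapsto Z$ is $\Gamma$-equivariant.

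It remains to show that $Z_u \ne Z_v$ almost surely whenever $u \ne v$. Fix $u \ne v$ and assume toward contradiction that $\Pr(Z_u = Z_v) > 0$. Unwinding definitions, this forces, for every $n$, an element $h_n \in \Gamma_{v_0}$ with $h_n g_u^{-1}(Y)|_{F_n} = g_v^{-1}(Y)|_{F_n}$; equivalently, $\gamma_n := g_v h_n g_u^{-1} \in \Gamma_{u,v}$ satisfies $\gamma_n(Y)|_{g_v(F_n)} = Y|_{g_v(F_n)}$. A König-lemma / compactness argument along the projective system $\overline{\Gamma}_{v_0} = \varprojlim G_n$, where $G_n$ is the finite image of $\Gamma_{v_0}$ in $\mathrm{Sym}(F_n)$, produces a compatible limit element $\bar\gamma \in \overline{\Gamma}_{u,v}$ with $\bar\gamma(Y) = Y$, which should contradict the characterization of aperiodicity in \cref{lem:aperiodic}.

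The main obstacle is the gap between $\Gamma$ and its closure $\overline{\Gamma}$: the diagonal extraction naturally lives in $\overline{\Gamma}$, whereas aperiodicity is formulated in terms of $\Gamma$. I plan to bridge this by using that $\Gamma_{v_0}$ is dense in the compact group $\overline{\Gamma}_{v_0}$ and that the cosets $\overline{\Gamma}_{u,v}$ are compact, so that any would-be $\bar\gamma \in \overline{\Gamma}_{u,v}$ fixing $Y$ can be approximated on arbitrarily large finite windows by genuine elements of $\Gamma_{u,v}$ that fix $Y$ on that window. Running this through a measurable-selection / union-bound argument across the countably many choices of $(u,v)$ and finite windows reduces the event $\{Z_u = Z_v\}$ to a countable union of null events coming from the $\Gamma$-version of aperiodicity, yielding the desired contradiction and completing the proof.
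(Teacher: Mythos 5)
Your construction of $Z$ is sound and is essentially the same as the paper's, though packaged differently: the paper applies a homeomorphism $\phi \colon C(\cA^\V)\to\{0,1\}^\N$ to the compact set $\Gamma_{v,v_0}Y$ (after passing to the closure of $\Gamma$ so that stabilizers are compact), whereas you build a concrete bitwise encoding by recording, level by level, the $\Gamma_{v_0}$-orbit of the configuration seen from $v$ on the $\Gamma_{v_0}$-invariant exhaustion $(F_n)$. Both recordings carry the same information, and your verification that each bit of $Z_v$ is a finitary $\Gamma$-factor of $Y$ is correct. Your use of \cref{lem:aperiodic} to characterize injectivity is also the same as the paper's.

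The gap is exactly where you flag it, but your proposed bridge does not hold. You propose to approximate a limit element $\bar\gamma\in\overline\Gamma_{u,v}$ fixing $Y$ by genuine elements $\gamma_n\in\Gamma_{u,v}$ that ``fix $Y$ on a large finite window,'' and then to union-bound over windows and pairs $(u,v)$. But the events ``some $\gamma\in\Gamma_{u,v}$ agrees with $Y$ on the window $F_n$'' are \emph{not} null --- they occur with probability bounded away from $0$ for every fixed $n$ --- and $\Gamma$-aperiodicity only says that no non-identity element of $\Gamma$ fixes $Y$ \emph{globally}. So the countable union you end up with is a union of non-null events, and no contradiction is produced. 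In other words, ``fixing on every finite window simultaneously'' is precisely the compactness statement that lands you in $\overline\Gamma$, and you cannot slide back into $\Gamma$ by working window by window.

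The clean resolution, which is what the paper does, is to pass to the closure \emph{at the outset} using \cref{rem:semi_nice_locally_compact}: replace $\Gamma$ by $\overline\Gamma$, noting that the class of $(\V,\Gamma)$-processes and of finitary factor maps, the semi-niceness hypothesis, and the aperiodicity hypothesis are all unaffected. Once $\Gamma$ is closed, $\Gamma_{v_0}$ is compact, the coset $\Gamma_{u,v}$ is compact, and the element produced by your K\"{o}nig/diagonal argument already lies in $\Gamma$, so \cref{lem:aperiodic} gives the contradiction directly. With that single change your proof goes through and is a perfectly good elementary substitute for the paper's Cantor-space homeomorphism.
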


In light of \cref{lem:aperiodic}, it is easy to see that the converse of \cref{lem:inv_total_order} holds in a strong sense, namely, that a process which admits a bitwise total order as a factor (finitary or otherwise) must be aperiodic.

\begin{proof}
Suppose that $Y$ takes values in a finite set $\cA$.
The space $\cA^{\V}$ is a totally disconnected, compact metrizable space having no isolated points  (with respect to the product topology). The  space $C(\cA^{\V})$ of compact subsets of $\cA^{\V}$  equipped with the Fell topology is again a totally disconnected, compact metrizable space having no isolated points. Hence, it is topologically a Cantor space, i.e., it is homeomorphic to $\{0,1\}^\N$.  Let $\phi:C(\cA^{\V}) \to \{0,1\}^\N$ be a homeomorphism. Using \cref{rem:semi_nice_locally_compact}, we may assume that $\Gamma$ is closed, so that its stabilizers are compact.

Fix $v_0 \in \V$ and define a process $Z$ by
\[ Z_v := \phi(\Gamma_{v,v_0}Y) .\]
It can be directly verified that the map $Y \mapsto Z$ is $\Gamma$-equivariant. Furthermore, $Z$ is a topo-finitary $\Gamma$-factor of $Y$ due to the continuity of $\phi$ and the assumption that the stabilizers have finite orbits. The aperiodicity of $Y$, together with \cref{lem:aperiodic}, imply that map $v \mapsto \Gamma_{v,v_0}Y$ is almost surely injective.
Using that $\phi$ is injective, we obtain that, almost surely, $Z_v \neq Z_u$ for distinct $u$ and $v$, and hence $\prec_Z$ is a total order on $\V$.
%
%
%
\end{proof}


\begin{remark}\label{rem:non_aperiodic_example}
	To further demonstrate the importance of the aperiodicity condition (4) in the definition of a nice permutation group, consider the following example:
 Let $\V=\Z \times \{-1,1\}$ and let $\Gamma$ be the group of permutations generated by the natural shift $(n,i) \mapsto (n+1,i)$ and all (uncountably many) transformations the form $(n,i) \mapsto (n, i s_i)$ where $s \in \{-1,1\}^\Z$. Then $\Gamma$ satisfies all the required properties except for the aperiodicity condition (i.e., it is a semi-nice unimodular permutation group; it is also amenable). Indeed, no countable-valued \iid\ process $Y$ on $\V$ is $\Gamma$-aperiodic since there is positive probability that the element $g \in \Gamma$ that swaps $(0,1)$ with $(0,-1)$ and fixes all other elements of $\V$ stabilizes $Y$.
For this reason, in order for an \iid\ process corresponding to a probability vector $p=(p_i)$ to be a factor of an \iid\ process corresponding to a probability vector $q$, it is necessary that $\sum_{i}p_i^2 \ge \sum_{i}q_i^2$. For example, the \iid\ process corresponding to $(\frac{1}{2},\frac{1}{2})$ is not a $\Gamma$-factor of the (higher entropy) \iid\ process corresponding to $(\frac34,\frac18,\frac18)$. In particular, \cref{thm:iid-isomorphic} (and similarly also \cref{thm:main}) fails in this case.
Actually, for this particular group $\Gamma$, it can be shown that two countable-valued \iid\ processes are finitarily isomorphic if and only if they have equal entropy and  $\sum_{i}p_i^2=\sum_{i}q_i^2$, where $p$ and $q$ are the corresponding probability vectors. This can be proved by  applications of the finitary isomorphism result for $\mathbb{Z}$-processes. We omit the details. 
\end{remark}

\subsection{Entropy for $(\V,\Gamma)$-processes}\label{sec:entropy}

In this section, we define the entropy of a $(\V,\Gamma)$-process when $\Gamma$ is a semi-nice unimodular amenable permutation group, and establish some basic facts about it, analogous to those in the classical setting of ordinary $\Gamma$-processes. Beyond the definition of  entropy of a $(\V,\Gamma)$-process, the results in the this section are not used in other parts of the paper.

Let $\Gamma$ be a semi-nice unimodular amenable permutation group of $\V$ and let $X$ be a $(\V,\Gamma)$-process such that $H(X_v)$, the Shannon entropy of $X_v$ for $v \in \V$, is finite (in particular, $X$ essentially takes values in a countable set).
The \textbf{entropy} of the $(\V,\Gamma)$-process $X$ is given by
\[ h(X) := \inf_{\substack{V \subset \V\text{ finite}\\\text{and non-empty}}} \frac{H(X_V)}{|V|} .\]
In the case where the action of $\Gamma$ on $\V$ is free, this is the classical notion of mean-entropy for a process over a countable amenable group.
As in the classical case, entropy of $(\V,\Gamma)$-processes is monotone under factors:
\begin{prop}\label{prop:monotone-entropy}
	Let $\Gamma$ be a semi-nice unimodular amenable permutation group of $\V$.
	Let $X$ and $Y$ be $(\V,\Gamma)$-processes such that $H(X_v),H(Y_v) < \infty$ for $v \in \V$. If $X$ is a $\Gamma$-factor of $Y$ then $h(X)\le h(Y)$.
\end{prop}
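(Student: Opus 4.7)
The plan is to adapt the classical proof of entropy monotonicity under factors from the countable amenable group case, handling the complications arising from non-free actions via the $\Gamma_{v_0}$-invariance structure and unimodularity.

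The first step is local approximation. Since $H(X_{v_0}) < \infty$ and $X = \phi(Y)$ for some $\Gamma$-equivariant measurable $\phi$, by martingale convergence $H(X_{v_0} \mid Y_F) \to 0$ as $F$ increases to $\V$. Given $\epsilon > 0$, I fix a finite $F \subset \V$ with $H(X_{v_0} \mid Y_F) < \epsilon$, and by enlarging (using that $\Gamma_{v_0}$ has finite orbits) I may assume $F$ is $\Gamma_{v_0}$-invariant and contains $v_0$. For each $v \in \V$, picking any $g_v \in \Gamma$ with $g_v(v_0) = v$, the set $F_v := g_v F$ does not depend on the choice of $g_v$, and equivariance of $\phi$ together with $\Gamma$-invariance of the joint distribution of $(X,Y)$ yields $H(X_v \mid Y_{F_v}) < \epsilon$ for every $v$.

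The second step is to bound $H(X_V)$ over a Følner sequence. Let $(V_n)$ be a Følner sequence on $\V$ for $\Gamma$ (which exists by amenability) and set $W_n := \bigcup_{v \in V_n} F_v$. By subadditivity and the chain rule,
\[
H(X_{V_n}) \;\le\; H(Y_{W_n}) + H(X_{V_n} \mid Y_{W_n}) \;\le\; H(Y_{W_n}) + \sum_{v \in V_n} H(X_v \mid Y_{F_v}) \;\le\; H(Y_{W_n}) + |V_n|\,\epsilon.
\]
I then invoke (or establish in parallel) the Ornstein--Weiss-type statement in this framework, namely $H(Y_{W_n})/|W_n| \to h(Y)$ along any Følner sequence, together with the boundary estimate $|W_n \setminus V_n|/|V_n| \to 0$, to obtain $H(Y_{W_n})/|V_n| \to h(Y)$. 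Combined with the infimum definition $h(X) \le H(X_{V_n})/|V_n|$, this produces $h(X) \le h(Y) + \epsilon$, and letting $\epsilon \to 0$ gives the proposition.

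The main technical obstacle is the Følner boundary estimate $|W_n \setminus V_n|/|V_n| \to 0$. In the classical free case one has $W_n = V_n F$ and the conclusion is immediate from $|gV_n \triangle V_n|/|V_n| \to 0$ for each $g \in \Gamma$. Here, because the action need not be free, $F_v$ is only naturally defined modulo the stabilizer and is best understood through the $\Gamma$-equivariant relation $\mathcal{E} := \Gamma \cdot (\{v_0\} \times F) \subset \V \times \V$. Unimodularity enters precisely here, via the mass transport principle: it implies that for every $u \in \V$ the ``reverse fiber'' $F^*_u := \{v : u \in F_v\}$ has constant cardinality $|F|$, so $\mathcal{E}$ is a bounded-degree $\Gamma$-equivariant directed graph on $\V$. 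The bound $|W_n \setminus V_n| \le \sum_{v \in V_n} |F_v \setminus V_n|$ counts edges of $\mathcal E$ leaving $V_n$, and decomposing $F$ into a finite union of $\Gamma_{v_0}$-orbits reduces this count to $o(|V_n|)$ using the Følner condition applied to finitely many elements of $\Gamma$ covering each orbit. Once this $W_n \approx V_n$ claim is in hand, the remaining Ornstein--Weiss step goes through by a standard quasi-tiling argument adapted from the countable amenable case, which in turn justifies both the limit for $h(Y)$ and the legitimacy of comparing $H(Y_{W_n})/|V_n|$ with $h(Y)$.
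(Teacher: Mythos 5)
There is a genuine gap: you conflate the two notions of Følner sequence that the paper is at pains to distinguish. In this framework a \emph{Følner} sequence satisfies $|V_n \setminus g V_n|/|V_n| \to 0$, whereas a \emph{co-Følner} sequence satisfies $|\partial_{v_1}^{v_0} V_n|/|V_n| \to 0$; in the free case these are left and right Følner sequences in $\Gamma$, respectively, and in general neither implies the other. Your bound
\[
|W_n \setminus V_n| \;\le\; \sum_{v \in V_n} |F_v \setminus V_n| \;\le\; |F|\cdot\bigl|\partial_F^{v_0} V_n\bigr|
\]
is a \emph{co-Følner} quantity, not a Følner one, so it does not become $o(|V_n|)$ merely from the Følner condition ``applied to finitely many elements of $\Gamma$'' --- that step implicitly replaces $V_n\setminus V_n g$ by $V_n\setminus g V_n$, which is unjustified for a general amenable permutation group. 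The mass-transport observation that $|F^*_u|=|F|$ is correct and is where unimodularity enters, but it does not rescue the boundary estimate: unimodularity guarantees the \emph{existence} of a bi-Følner sequence (\cref{lem:biFolner}), it does not make an arbitrary Følner sequence co-Følner. The same confusion affects the appeal to an Ornstein--Weiss-type convergence $H(Y_{W_n})/|W_n|\to h(Y)$: because $H(Y_\cdot)$ is invariant under left translation, this limit holds along co-Følner sequences (this is exactly \cref{cor:h_inf_formula}, a consequence of the Shearer-type bound in \cref{lem:H_almost_subaddtivity1}), and you would further have to check that $(W_n)$ inherits the co-Følner property from $(V_n)$.

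If you start from a co-Følner sequence, both steps can be repaired and your route goes through, but you should compare it with the paper's simpler argument: instead of enlarging the window to $W_n=\bigcup_{v\in V_n}F_v$, the paper conditions $X_{V_n}$ on $Y_{V_n}$ over the \emph{same} window $V_n$. For $v \in V_n \setminus \partial_F^{v_0} V_n$ one has $H(X_v\mid Y_{V_n}) \le H(X_{v_0}\mid Y_F) \le \epsilon$, and the remaining boundary vertices contribute at most $|\partial_F^{v_0} V_n|\,H(X_{v_0})$, which is $o(|V_n|)$ precisely by the co-Følner condition. Combined with \cref{cor:h_inf_formula} applied to both $X$ and $Y$ along the same sequence $(V_n)$, this yields $h(X)\le h(Y)+\epsilon$ directly, with no need to control $W_n\setminus V_n$ or to apply Ornstein--Weiss along an auxiliary sequence.
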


In particular, this shows that the entropy is invariant under measure-theoretic isomorphism for $(\V,\Gamma)$-processes such that $H(X_{v_0})<\infty$, over semi-nice unimodular amenable groups. 
For a $(\V,\Gamma)$-process $X$ such that $H(X_{v_0})=\infty$, with $\Gamma$ a semi-nice unimodular amenable permutation group, we can consistently define
\[ h(X) := \sup\{ h(X') : X' \mbox{ is a factor of } X \mbox{ with } H(X'_v) < \infty \mbox{ for all  } v\in \V\} .\]


\medbreak

We proceed to prove \cref{prop:monotone-entropy}, whose proof we provide for completeness, and to reassure that this result carries over from the classical setting of ordinary $\Gamma$-processes to the setting of $(\V,\Gamma)$-processes. For this, we shall show that $h(X)$ can be computed along any co-F{\o}lner sequence in $\V$ (defined below).

Recall that a sequence of compact subsets $(F_n)_{n=1}^\infty$ in a locally compact  group $\Gamma$ is called a \textbf{bi-F{\o}lner sequence} if 
\[\lim_{n \to \infty} \frac{m_\Gamma(KF_nK \setminus F_n)}{m_\Gamma(F_n)} = 0 \qquad\text{for every compact }K \subset \Gamma .\]
A locally compact Polish group admits a bi-F{\o}lner sequence if and only if it is amenable and unimodular. 
See \cite[``variants of  F{\o}lner's condition'']{ornsteinWeissEntropy1987}.


Getting back to our case of interest, $\Gamma$ is a semi-nice group of permutations of $\V$. In particular, the closure of $\Gamma$ is a locally compact Polish
group. A sequence $(V_n)_{n=1}^\infty$ of finite subsets of $\V$ is called a \textbf{F{\o}lner sequence}  (with respect to the action of  $\Gamma$) if 
\[\lim_{n \to \infty}\frac{| V_n \setminus gV_n|}{|V_n|} = 0 \qquad\text{for any }g \in \Gamma.\]
We call $(V_n)_{n=1}^\infty$ a \textbf{co-F{\o}lner sequence}  (with respect to the action of  $\Gamma$) if
\[\lim_{n \to \infty}\frac{|\partial_{v_1}^{v_0} V_n|}{|V_n|} = \lim_{n \to \infty}\frac{|\partial_{v_1}^{v_0} V_n^c|}{|V_n|} = 0 \qquad \text{for every }v_0,v_1 \in \V ,\]
where
\[
\partial_{v_1}^{v_0} V := \left\{g \in \Gamma : g(v_0) \in V,~ g(v_1) \notin V\right\} v_0.
\]
Since $\partial_{h(v_1)}^{h(v_0)}V=\partial_{v_1}^{v_0}V$ for any $h \in \Gamma$, we may always fix one of $v_0$ or $v_1$ in the definition of a co-F{\o}lner sequence.
Since $|\partial^{v_0}_{v_1} V^c| \le | \Gamma_{v_1} v_0 | \cdot | \partial_{v_0}^{v_1} V |$, if one of the limits in the definition of a co-F{\o}lner sequence is zero (for all $v_1$), then so is the other.
If $(V_n)_{n=1}^\infty$ is both F{\o}lner and co-F{\o}lner, we say it is a \textbf{bi-F{\o}lner sequence}. Let us mention that in the special case where the action of $\Gamma$ on $\V$ is transitive and free, we can identify $\V$ with $\Gamma$. In this case, F{\o}lner and co-F{\o}lner sequences in $\V$ coincide with left and right F{\o}lner sequences in $\Gamma$, respectively,   and then the two notions of bi-F{\o}lner sequences coincide.
Any continuous action of a locally compact amenable group on a locally compact space is an amenable action in the sense of Greenleaf~\cite{greenleaf69Amenable}, i.e., there exists a F{\o}lner sequence in $\V$. Conversely, if a group acts transitively on a locally compact space, the action is amenable and the stabilizer of any point is a compact group, then the acting group is an amenable group. It follows that a semi-nice permutation group $\Gamma$ is amenable (as a locally compact group) if and only if its action on $\V$ is amenable. It turns out that co-F{\o}lner sequences come up naturally in the context of entropy of $(\V,\Gamma)$-processes. The following lemma shows that bi-F{\o}lner sequences (and, in particular, co-F{\o}lner sequences) in $\V$ also exist in this case.

\begin{lemma}\label{lem:biFolner}
	Let $\Gamma$ be a semi-nice unimodular amenable permutation group of $\V$ and let $(F_n)_{n=1}^\infty$ be a bi-F{\o}lner sequence in $\overline{\Gamma}$ such that $\overline{\Gamma}_{v_0}F_n \overline{\Gamma}_{v_0}= F_n$ for all $n$. Let $V_n = F_n v_0$. Then $(V_n)_{n=1}^\infty$ is a bi-F{\o}lner sequence in $\V$, and each $V_n$ is $\Gamma_{v_0}$-invariant.
\end{lemma}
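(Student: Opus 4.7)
The plan is to pass via the natural identification $\V \cong \overline{\Gamma}/\overline{\Gamma}_{v_0}$, which is valid because $\overline{\Gamma}_{v_0}$ is compact by assumption and open as the preimage of $\{v_0\}$ under the continuous action map $g \mapsto gv_0$. Under this identification, $V_n = F_n v_0$ is the image of $F_n$ in $\V$, and the bi-F{\o}lner estimates on $F_n$ in Haar measure translate directly into F{\o}lner and co-F{\o}lner estimates on $V_n$ in counting measure. Throughout, let $m$ denote a Haar measure on $\overline{\Gamma}$.

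The technical backbone is the counting identity $|Av_0| = m(A)/m(\overline{\Gamma}_{v_0})$ for every compact, right $\overline{\Gamma}_{v_0}$-invariant $A \subset \overline{\Gamma}$, since such an $A$ partitions into left $\overline{\Gamma}_{v_0}$-cosets, each of Haar measure $m(\overline{\Gamma}_{v_0})$ and each mapping to a single point of $\V$. Applied to $A = F_n$ this gives $|V_n| = m(F_n)/m(\overline{\Gamma}_{v_0})$; the $\Gamma_{v_0}$-invariance of $V_n$ is immediate from $\Gamma_{v_0} V_n \subset \overline{\Gamma}_{v_0} F_n v_0 = F_n v_0 = V_n$.

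For the F{\o}lner property, I would fix $g \in \Gamma$ and verify that $V_n \setminus gV_n = (F_n \setminus gF_n)v_0$ using that both $F_n$ and $gF_n$ are right $\overline{\Gamma}_{v_0}$-invariant. The set $F_n \setminus gF_n$ is then itself right $\overline{\Gamma}_{v_0}$-invariant, so the counting identity converts the desired ratio into $|V_n \setminus gV_n|/|V_n| = m(F_n \setminus gF_n)/m(F_n) = m(gF_n \setminus F_n)/m(F_n)$ (the last equality by unimodularity), and this is $o(1)$ by bi-F{\o}lner applied to the compact set $K = \{e, g\}$.

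The main obstacle is the co-F{\o}lner bound, since $v_1 = h_1 v_0$ lies in a possibly nontrivial $\overline{\Gamma}_{v_0}$-orbit $O := \overline{\Gamma}_{v_0} v_1$; crucially, $O$ is finite by semi-niceness. Choosing $k_i \in \overline{\Gamma}_{v_0}$ so that $g_i := k_i h_1$ realizes each distinct $u_i \in O$ as $g_i v_0 = u_i$, I would unwind the definition to obtain $\partial^{v_0}_{v_1} V_n = \tilde T_n \cdot v_0$ with
\[ \tilde T_n \;:=\; \bigcup_{i=1}^{|O|} \bigl( F_n \setminus F_n g_i^{-1} \bigr) \;\subset\; F_n. \]
Right $\overline{\Gamma}_{v_0}$-invariance of $\tilde T_n$ (from $\overline{\Gamma}_{v_0} O = O$) combined with the counting identity yields $|\partial^{v_0}_{v_1} V_n| = m(\tilde T_n)/m(\overline{\Gamma}_{v_0})$. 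The finiteness of $O$ is what makes the estimate close: with the compact set $K := \{e, g_1^{-1}, \dots, g_{|O|}^{-1}\}$, bi-F{\o}lner gives $m(F_n g_i^{-1} \setminus F_n) \le m(F_n K \setminus F_n) = o(m(F_n))$ for every $i$, and unimodularity gives $m(F_n \setminus F_n g_i^{-1}) = m(F_n g_i^{-1} \setminus F_n)$; summing the finitely many terms shows $|\partial^{v_0}_{v_1} V_n|/|V_n| = m(\tilde T_n)/m(F_n) = o(1)$. The complementary bound on $|\partial^{v_0}_{v_1} V_n^c|/|V_n|$ then follows from the observation in the text that it is dominated by $|\Gamma_{v_1} v_0| \cdot |\partial^{v_1}_{v_0} V_n|$, which is handled by the same argument after applying a transitive translation moving $v_1$ into the role of $v_0$.
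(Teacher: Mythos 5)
Your proposal is correct, and it follows the same overall route as the paper's proof --- pushing the bi-F{\o}lner property from $\overline{\Gamma}$ down to $\V$ via the identity $|Av_0| = m(A)/m(\overline{\Gamma}_{v_0})$ for compact right-$\overline{\Gamma}_{v_0}$-invariant $A$ --- but you handle the co-F{\o}lner estimate more carefully, and this extra care actually matters. The paper writes $\partial_{v_1}^{v_0} V_n = (F_n \setminus F_n g^{-1})v_0$ and then asserts $|\partial_{v_1}^{v_0} V_n| = m_{\overline{\Gamma}}(F_n \setminus F_n g^{-1})$ (invoking left-invariance $\overline{\Gamma}_{v_0}F_n=F_n$), but the set $F_n \setminus F_n g^{-1}$ is only left-$\overline{\Gamma}_{v_0}$-invariant, not right-$\overline{\Gamma}_{v_0}$-invariant, so it need not be a union of left cosets of $\overline{\Gamma}_{v_0}$ and the counting identity does not apply to it directly; in fact $m_{\overline{\Gamma}}(F_n \setminus F_n g^{-1})$ is in general only a \emph{lower} bound for $|\partial_{v_1}^{v_0} V_n|$, which is the wrong direction for the purpose of the proof. (A concrete instance: $\V=\Z$, $\Gamma$ the isometry group of $\Z$, $v_0=0$, $v_1=1$, $F_n$ the bi-invariant set of isometries moving $0$ into $[-n,n]$; one computes $|\partial^0_1 V_n|=2$ but $m_{\overline{\Gamma}}(F_n\setminus F_nT_{-1})=1$ with the normalization $m_{\overline{\Gamma}}(\overline{\Gamma}_0)=1$.) Your $\tilde T_n = \bigcup_i (F_n \setminus F_n g_i^{-1})$ is exactly the right-saturation $(F_n \setminus F_n g^{-1})\overline{\Gamma}_{v_0}$: each $(F_n\setminus F_n g_i^{-1})v_0 = \partial^{v_0}_{u_i}V_n = \partial^{v_0}_{v_1}V_n$ by the paper's invariance observation $\partial^{h(v_0)}_{h(v_1)}V=\partial^{v_0}_{v_1}V$, so $\tilde T_n v_0 = \partial^{v_0}_{v_1}V_n$, and $\tilde T_n$ \emph{is} right-$\overline{\Gamma}_{v_0}$-invariant since right-multiplication by $\overline{\Gamma}_{v_0}$ permutes the finitely many sets $F_n\setminus F_n g_i^{-1}$; the counting identity then gives $|\partial^{v_0}_{v_1}V_n| = m(\tilde T_n)/m(\overline{\Gamma}_{v_0}) \le \sum_i m(F_n\setminus F_n g_i^{-1})/m(\overline{\Gamma}_{v_0}) = o(|V_n|)$ since $|O|$ is finite. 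This is a genuine fix of a small gap in the paper's argument, and the finiteness of the stabilizer-orbit (semi-niceness) is exactly what makes the fix close. Two cosmetic remarks: the equality $m(F_n\setminus gF_n)=m(gF_n\setminus F_n)$ in your F{\o}lner step follows already from left-invariance of Haar (unimodularity is not needed there, though it genuinely is needed in the co-F{\o}lner step for $m(F_n g_i^{-1})=m(F_n)$); and your verification of $\Gamma_{v_0}$-invariance of $V_n$ only exhibits the inclusion $\Gamma_{v_0}V_n\subseteq V_n$, but since $\Gamma_{v_0}$ is a group this does give equality.
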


Note that a sequence $(F_n)_{n=1}^\infty$ as in the lemma indeed exists.

\begin{proof}
	Let $v_1 \in \V$ and choose any $g \in \Gamma_{v_0,v_1}$.
	Then (using the fact that $F_n \overline{\Gamma}_{v_0}= F_n$)
	\[
	\partial_{v_1}^{v_0} V_n=\left( F_n \setminus F_n g^{-1}\right) v_0.
	\]
	So (using the fact that $\overline{\Gamma}_{v_0}F_n = F_n$)
	\[|\partial_{v_1}^{v_0} V_n| = m_{\overline{\Gamma}}\left(F_n \setminus F_n g^{-1} \right). \]
	Since $(F_n)$ is a bi-F{\o}lner sequence, we have that $m_{\overline{\Gamma}}\left(F_n \setminus F_n g^{-1} \right) = o(m_{\overline{\Gamma}}(F_n))$ as $n \to \infty$.
	Using that $|V_n| = m_{\overline{\Gamma}}(F_n \Gamma_{v_0}) = m_{\overline{\Gamma}}(F_n)$, we deduce that $(V_n)$ is co-F{\o}lner.

	We proceed to show that $(V_n)$ is a F{\o}lner sequence. Let $g \in \Gamma$. Then
	\[\partial_g V_n = F_nv_0 \setminus gF_nv_0 \subseteq (F_n \setminus gF_n)v_0 \]
	Using the fact that $F_n \overline{\Gamma}_{v_0}= F_n$, it follows that $|\partial_g V_n| \le m_{\overline{\Gamma}}(F_n \setminus gF_n)$, and since $(F_n)$ is a bi-F{\o}lner, it follows that $(V_n)$ is a F{\o}lner sequence.
\end{proof}

Thus, if $\Gamma$ is a semi-nice unimodular amenable permutation group of $\V$, then there exists a bi-F{\o}lner sequence $(V_n)_{n=1}^\infty$ in $\V$, with each $V_n$ being $\Gamma_{v_0}$-invariant.
In fact, a minor adaptation of the proof in the setting of graphs given in \cite[Section~8]{lyons2017probability} shows that a semi-nice (closed) permutation group that admits a co-F{\o}lner sequence in $\V$ is unimodular and amenable (and hence admits a bi-F{\o}lner sequence).

As in the classical setting, the entropy of a process can actually be computed as a limit along a co-F{\o}lner sequence (see \cref{cor:h_inf_formula} below). This is a simple consequence of the following inequality. 
For $W,U \subset \V$, denote
\[
\partial_{U}^{v_0}W := \bigcup_{u \in U}\partial_u^{v_0}W.
\]

\begin{lemma}\label{lem:H_almost_subaddtivity1}
	Let $\Gamma$ be a semi-nice permutation group of $\V$ and let $W,U \subset \V$ be finite and $u_0 \in U$. Then
	\[
	\frac{H(X_{W})}{|W|} \le \frac{H(X_{U})}{|U|} \cdot \left(1+ \frac{|\partial^{u_0}_U W^c|}{|W|} \right).
	\]
\end{lemma}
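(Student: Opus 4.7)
The plan is to apply Shearer's entropy inequality to the family of translates $\{g(U) \cap W\}_{g \in \overline\Gamma}$, indexed by Haar measure on the closure $\overline\Gamma$. First I would invoke \cref{rem:semi_nice_locally_compact} to replace $\Gamma$ by its closure, and normalize Haar measure $m$ so that $m(\overline\Gamma_{v_0}) = 1$ for some (hence every, by unimodularity) $v_0 \in \V$. With this normalization, $m(\{g : g(u) \in S\}) = |S|$ for every finite $S \subset \V$ and every $u \in \V$, and (using unimodularity, so that $m(\overline\Gamma_u) = 1$ for every $u$) one also has $m(\{g : g(u) = v\}) = 1$ for every $u,v \in \V$.

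The argument then rests on two observations. First, if $g(u_0) \notin W \cup \partial^{u_0}_U W^c$, then $g(U) \cap W = \emptyset$: indeed, $g$ itself witnesses that $g(u_0) \notin \partial^{u_0}_U W^c$, which together with $g(u_0) \notin W$ forces $g(u) \notin W$ for every $u \in U$. Second, for each $v \in W$,
\[m\bigl(\{g \in \overline\Gamma : v \in g(U)\}\bigr) = |U|,\]
obtained by decomposing this set as $\bigsqcup_{u \in U}\{g : g(u) = v\}$, which is disjoint because $g$ is a bijection.

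Applying the integral form of Shearer's inequality to the measurable family $(g(U) \cap W)_{g \in \overline\Gamma}$ with the measure $m$, the second observation supplies uniform coverage $|U|$ for every $v \in W$, yielding
\[|U| \cdot H(X_W) \le \int_{\overline\Gamma} H(X_{g(U) \cap W})\, dm(g).\]
By the first observation, the integrand vanishes outside $\{g : g(u_0) \in W \cup \partial^{u_0}_U W^c\}$, a set of $m$-measure $|W| + |\partial^{u_0}_U W^c|$ (disjoint union since $\partial^{u_0}_U W^c \subset W^c$); on this set, monotonicity of $H$ together with the $\Gamma$-invariance of $X$ gives $H(X_{g(U) \cap W}) \le H(X_{g(U)}) = H(X_U)$. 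Hence the integral is at most $\bigl(|W| + |\partial^{u_0}_U W^c|\bigr) H(X_U)$, and dividing by $|W|\cdot|U|$ delivers the claim.

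The main ingredient, which I expect to involve the most content though not genuine difficulty, is the integral version of Shearer's inequality: for a measurable family $(S_\alpha)$ of finite subsets of $V$ and a positive measure $\mu$ with $\int \mathbf{1}[v \in S_\alpha]\, d\mu(\alpha) \ge k$ for every $v \in V$, one has $k \cdot H(X_V) \le \int H(X_{S_\alpha})\, d\mu(\alpha)$. This follows by fixing any ordering of $V$, expanding each $H(X_{S_\alpha})$ via the chain rule, and using Fubini with the coverage lower bound term by term. The one subtler conceptual point is that one truly needs unimodularity to make $m(\overline\Gamma_u)$ independent of $u$; without it, Observation~2 would give a non-uniform coverage weighted by the modular function. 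This is harmless here because the lemma lives in \cref{sec:entropy}, whose standing assumption is that $\Gamma$ is semi-nice, unimodular, and amenable.
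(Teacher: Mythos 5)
Your proof is correct and is, at heart, the same argument as the paper's, just in ``continuous'' rather than ``discretized'' form. The paper passes to the finite-index subgroup $\Gamma_U := \bigcap_{u\in U}\Gamma_u$ of $\Gamma_{u_0}$, works with the coset space $\Gamma/\Gamma_U$, and applies the classical finite Shearer inequality to the resulting multi-cover $\{gU : g\in\Gamma/\Gamma_U,\ gU\cap W\neq\emptyset\}$ of $W$; you instead integrate directly against Haar measure on $\overline\Gamma$ and apply the integral form of Shearer. Since the function $g\mapsto g(U)\cap W$ is constant on cosets of $\Gamma_U$, and these cosets have constant Haar mass $1/\ell$ under the normalization $m(\overline\Gamma_{v_0})=1$, your integral literally equals $\frac{1}{\ell}$ times the paper's discrete sum, and your coverage computation $m(\{g: v\in g(U)\})=|U|$ is the $m$-measure version of their ``$\ell|U|$-cover'' claim. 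The Haar formulation has the advantage of making the role of unimodularity completely transparent, and you are right to flag it: both proofs silently use it (the paper's exact $\ell|U|$-covering requires $[\Gamma_u:\Gamma_U]=[\Gamma_{u_0}:\Gamma_U]$ for all $u\in U$, which is a unimodularity statement), even though the lemma's hypotheses mention only semi-niceness; the standing assumption of \cref{sec:entropy} supplies the missing unimodularity.
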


\begin{proof}
	Observe that $\Gamma_U := \bigcap_{u \in U} \Gamma_u$ is a subgroup of $\Gamma_{u_0}$ of finite index $\ell := [\Gamma_U : \Gamma_{u_0}]$. Let $\Gamma' := \Gamma / \Gamma_U$, and note that $gu$ is well defined for $g \in \Gamma'$ and $u \in U$, in the sense that it does not depend on the representative. Consider the (multi-)collection of sets
	\[ \cK := \{ gU : g \in \Gamma',~ gU \cap W \neq \emptyset \} .\]
	We claim this is a $\ell|U|$-cover of $W$, meaning that each $w \in W$ is contained in exactly $\ell|U|$ many sets in $\cK$. Indeed, for each $u \in U$, there are $\ell$ elements $g \in \Gamma'$ such that $gu=w$ (and these elements are clearly distinct for different $u$). Thus, Shearer's inequality (and using the $\Gamma$-invariance of $X$) yields that
	\[ H(X_W) \le \frac {H(X_U)}{\ell|U|} \cdot |\cK| .\]
	It remains only to show that $|\cK| \le \ell (|W| + |\partial_U^{u_0} W^c|)$. Indeed, this follows since
	\[ |\cK| = |\{ g \in \Gamma' : gU \cap W \neq \emptyset \}| = \ell \cdot |\{ g \in \Gamma : gU \cap W \neq \emptyset \}u_0| \]
	and
	\[ \{ g \in \Gamma : gU \cap W \neq \emptyset \}u_0 \\
	= W \cup \{ g \in \Gamma : gU \cap W \neq \emptyset,~ g(u_0) \notin W \}u_0
	= W \cup \partial^{u_0}_U W^c. \qedhere \]

\end{proof}

From \cref{lem:H_almost_subaddtivity1} and the definition of a co-F{\o}lner sequence, we immediately get:

\begin{cor}\label{cor:h_inf_formula}
Let $\Gamma$ be a semi-nice permutation group of $\V$ and let $X$ be a $(\V,\Gamma)$-process such that $H(X_v)<\infty$.
Then for any co-F{\o}lner sequence $(V_n)_{n=1}^\infty$ in $\V$, we have
\[ h(X) = \lim_{n \to  \infty}\frac{H(X_{V_n})}{|V_n|} .\]
\end{cor}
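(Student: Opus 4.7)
The plan is to establish both inequalities separately. The lower bound $\liminf_{n\to\infty} H(X_{V_n})/|V_n| \geq h(X)$ is immediate from the definition of $h(X)$ as an infimum over all finite non-empty subsets of $\V$: each term in the sequence is itself bounded below by $h(X)$.

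For the matching upper bound, I would fix $\epsilon>0$ and choose a finite non-empty $U \subset \V$ with $H(X_U)/|U| \le h(X)+\epsilon$, which exists by definition of the infimum. Then I apply \cref{lem:H_almost_subaddtivity1} with $W = V_n$ (and any fixed $u_0\in U$) to obtain
\[
\frac{H(X_{V_n})}{|V_n|} \le \frac{H(X_{U})}{|U|}\cdot\left(1+\frac{|\partial^{u_0}_U V_n^c|}{|V_n|}\right).
\]
Since $\partial^{u_0}_U V_n^c = \bigcup_{u\in U}\partial_u^{u_0} V_n^c$, a union bound gives $|\partial^{u_0}_U V_n^c|\le \sum_{u\in U}|\partial_u^{u_0} V_n^c|$. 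The co-F{\o}lner hypothesis tells us that $|\partial_u^{u_0} V_n^c|/|V_n|\to 0$ for every fixed $u\in U$, and since $U$ is finite the sum of these finitely many terms also tends to $0$. Therefore
\[
\limsup_{n\to\infty}\frac{H(X_{V_n})}{|V_n|}\le \frac{H(X_U)}{|U|}\le h(X)+\epsilon,
\]
and letting $\epsilon\downarrow 0$ completes the proof.

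There is no real obstacle here; the whole argument is a direct packaging of \cref{lem:H_almost_subaddtivity1} together with the definition of a co-F{\o}lner sequence. The only small bookkeeping point worth flagging is that the boundary term in \cref{lem:H_almost_subaddtivity1} involves $\partial^{u_0}_U$, which is a union over $u\in U$ rather than a single $\partial^{u_0}_{v_1}$ set, but since $U$ is finite and chosen once and for all (before sending $n\to\infty$), this causes no difficulty.
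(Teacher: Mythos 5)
Your proof is correct and is exactly the argument the paper has in mind: the paper states the corollary follows ``immediately'' from Lemma~\ref{lem:H_almost_subaddtivity1} and the definition of a co-F{\o}lner sequence, and your write-up simply spells out the lower bound from the infimum definition of $h(X)$ and the upper bound by choosing $U$ nearly optimal and letting the boundary term vanish. Nothing to add.
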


We are now ready to prove \cref{prop:monotone-entropy}.

\begin{proof}[Proof of \cref{prop:monotone-entropy}]
	Let $X$ be a factor of $Y$ and fix $\epsilon>0$.
	There exists a finite set $F \subset \V$ so that $H(X_{v_0} \mid Y_F) < \epsilon$.
	Let $(V_n)_{n \in \N}$ be a co-F{\o}lner sequence in $\V$, which exists by \cref{lem:biFolner}.  
	If $v \in  V_{n}\setminus \partial_F^{v_0}V_n$ and $g \in \Gamma_{v_0,v}$ then $F \subseteq g^{-1}(V_n)$. Thus,
	\[H(X_v \mid Y_{V_n}) = H(X_{v_0} \mid Y_{g^{-1}(V_n)}) \le H(X_{v_0} \mid Y_{F}) \le \epsilon.\]
	Hence,
	\[ H(X_{V_n}\mid Y_{V_n}) \le H(X_{\partial_F^{v_0} V_n}) + H(X_{V_{n}\setminus \partial_F^{v_0}V_n} \mid Y_{V_n}) \le |\partial_F^{v_0} V_n|H(X_{v_0}) + \epsilon |V_n|.\]
	Since $(V_n)$ is co-F{\o}lner and $\epsilon$ was arbitrarily, we conclude that $\frac{1}{|V_n|}H(X_{V_n} \mid Y_{V_n}) \to 0$ as $n \to \infty$. Finally, using \cref{cor:h_inf_formula}, $h(X)=\lim_{n\to\infty} \frac{1}{|V_n|}H(X_{V_n}) \le \lim_{n\to\infty} H(Y_{V_n}) = h(Y)$.
\end{proof}

We conclude this section with a few additional remarks, starting with the roles of amenability and unimodularity in the entropy theory of $(\V,\Gamma)$-processes. A famous example of Ornstein and Weiss~\cite{ornstein1980amenablerohlin} shows that over a free group, an \iid\ process can have \iid\ factors of greater entropy. In fact, on a regular tree, any \iid\ process is an automorphism-equivariant factor of the uniform 2-valued  \iid\ process \cite{ball2005factors}. In particular, the amenability assumption in \cref{prop:monotone-entropy} cannot be dropped.
The unimodularity assmption also cannot be dropped. To see this, consider the automorphism group $\Gamma$ of the grandparent graph \cite[Example 7.1]{lyons2017probability}, or equivalently, the group of automorphisms of a regular tree which fix a given end, viewed as a permutation group of the vertex set $\V$. It is well known that this group is amenable as a locally compact group \cite{cartwright1994random} (equivalently, the action is amenable in the sense of Greenleaf), but not unimodular. For references see \cite[Section 7.9]{lyons2017probability}. Since $\Gamma$, as a group of permutations of $\V$, is a subgroup of the full automorphism group of a regular tree, it follows that in this case too, an \iid\ process can have \iid\ factors of greater entropy. We note that the latter also gives an example of a semi-nice amenable permutation group which is non-unimodular and admits no co-F{\o}lner sequence (for semi-nice amenable permutation groups, the latter two properties are equivalent).

Ornstein and Weiss \cite{ornsteinWeissEntropy1987} introduced an invariant which generalizes classical Kolmogorov--Sinai entropy for a very general class of \emph{essentially free} actions of locally compact unimodular amenable groups (under a certain mild condition which applies in great generality).  It is of interest to explore the precise relation between the entropy of a $(\V,\Gamma)$-process and the Ornstein--Weiss entropy of the associated $\overline{\Gamma}$-action. However, this would be a detour and we do not pursue this issue here.

\section{Finitary $\Z$-type orders}\label{sec:z_type_order}
	A classical theorem of Ornstein and Weiss~\cite{ornstein1980amenablerohlin} states that all essentially free, ergodic actions of a countable group are hyperfinite, and in particular, orbit equivalent to a $\Z$-action. The latter part can be reinterpreted as follows:
	For any countable group $\Gamma$, any essentially free and ergodic $\Gamma$-process $X$  admits a factor that is a random invariant $\Z$-type total order. See \cite{downarowicz2021multiorders} for more on this point of view. The main result of this section, \cref{prop:cycle-free-permutation}, is a finitary version of an analogous statement regarding \iid\ $(\V,\Gamma)$-processes for nice amenable permutation groups. Note that the $\Gamma$-orbits of a $(\V,\Gamma)$-process can be uncountable, thus clearly not orbit equivalent to a $\Z$-action in the classical sense.

Let $\Gamma < \mathit{Perm}(\V)$ be a group.
For $z \in \V^\V$ and $g \in \Gamma$, we define $T^g(z)_v = g(z_{g^{-1}(v)})$. This defines an  action of $\Gamma$ on $\V^\V$. 
We say that a process $Y$ admits a \textbf{cycle-free permutation} of $\V$ as a finitary factor  if there exists a topo-finitary map 
$\pi$ from $Y$ to $\mathit{Perm}(\V)$ such that for any $g \in \Gamma$, almost surely, $T^g(\pi(Y))=\pi(g(Y))$, and, almost surely, $\pi(Y)$ is a permutation of $\V$ with no finite orbits. Note that if a permutation of $\V$ has a single orbit, then it can be seen as a $\Z$-type order on $\V$.

\begin{prop}[Finitary cycle-free permutation and $\Z$-type order]\label{prop:cycle-free-permutation}
Let $\Gamma < \mathit{Perm}(\V)$ be a nice  permutation  group and let $Y$ be a non-trivial \iid\ process. Then $Y$ admits a cycle-free permutation of $\V$ as a finitary factor. Furthermore, if $\Gamma$ is amenable and $Y$ has at least three symbols, then the permutation can be chosen to have a single orbit almost surely.
\end{prop}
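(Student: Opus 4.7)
The plan is to leverage Lemma~\ref{lem:inv_total_order} to extract a finitary bitwise total order on $\V$ from $Y$, and then use this order together with independent auxiliary randomness from $Y$ to assemble a cycle-free permutation, upgrading to a single-orbit permutation under the additional hypotheses.

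Because $\Gamma$ is nice, the non-trivial \iid\ process $Y$ is aperiodic, so by recoding (or passing to a finite-valued factor) one obtains a finite-valued aperiodic finitary factor of $Y$, to which Lemma~\ref{lem:inv_total_order} applies. This yields a finitary bitwise total order $\prec_Z$ on $\V$ together with $\{0,1\}^\N$-labels $(Z_v)$ that are almost surely pairwise distinct. Since $Y$ is non-trivial \iid, we may moreover split off an independent \iid\ process $Y'$ providing an independent per-vertex random label.

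For the first statement, I would build $\sigma$ by an iterative, hierarchical matching indexed by levels $n=1,2,\dots$. At level $n$ the matching is determined by the first $n$ bits of the $Z$-labels together with the auxiliary randomness $Y'$, and each level refines the previous one. The invariant maintained is that at each level the partial permutation is a bijection on its defined domain and has no orbit of length $\le n$. Because $\prec_Z$ is dense and unbounded above and below (a consequence of the Cantor-space nature of the $Z$-labels), such an extension is always feasible: whenever an orbit threatens to close, the total order produces a ``next'' vertex outside the current orbit to extend it. Passing to the level $n\to\infty$ limit yields a bijection $\sigma\colon\V\to\V$ with no finite orbits. The construction is finitary since $\sigma(v)$ stabilizes at a finite (random) level depending only on a finite neighborhood of $v$ in $Y$, and is $\Gamma$-equivariant since each step is defined from $\Gamma$-equivariant data.

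For the second (single-orbit) statement, I would use that the amenability of $\Gamma$ provides co-F{\o}lner sequences in $\V$ by Lemma~\ref{lem:biFolner}, and combine this with a finitary Rokhlin-type tiling to partition $\V$ into finite F{\o}lner-type tiles. Inside each tile, the total order $\prec_Z$ supplies a linear order, and consecutive tiles are then linked into a single bi-infinite chain via the order on suitably chosen ``centers''. The three-symbol assumption provides the extra entropy needed to realize the markers and the tiling construction finitarily (in the spirit of the use of at least three symbols in finitary Bernoulli coding arguments). The main technical obstacle is precisely this finitary Ornstein--Weiss-style tiling construction in the nice amenable permutation group setting and the single-orbit linking. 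Even in the classical free amenable countable group case, finitary tiling constructions are relatively delicate, and the extension to nice permutation groups requires careful attention to unimodularity and stabilizers; however, these technicalities are manageable within the framework developed earlier in the paper, in particular using the bi-F{\o}lner sequences produced in Lemma~\ref{lem:biFolner}.
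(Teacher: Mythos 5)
Your first part is broadly in the right direction: you correctly invoke Lemma~\ref{lem:inv_total_order} to obtain a finitary bitwise total order, and the idea of a hierarchical, level-by-level refinement is in the spirit of the paper's construction. However, the paper interposes a crucial intermediate structure, the finitary cell process (Proposition~\ref{prop:finite_cell_process}, built in Lemma~\ref{lem:finite_cell_processI}), and then uses Lemma~\ref{lem:cycle-free-permutation} to define the permutation as a \emph{path} within each cell rather than a cycle, with the $\prec$-order of sub-cells governing how sub-paths are spliced into longer paths in coarser cells. Your phrase ``whenever an orbit threatens to close, the total order produces a `next' vertex outside the current orbit'' is the right intuition but glosses over exactly the hard part: you give no equivariant finitary mechanism that actually prevents orbit closure as $n\to\infty$. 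In the paper this is where unimodularity enters, via a mass-transport argument showing that eventual cells have no $\prec$-maximal element; your sketch provides no substitute for that step.

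The second part has a genuine gap. You propose a finitary Rokhlin/Ornstein--Weiss-type tiling of $\V$, and you acknowledge this is delicate; in fact no such finitary tiling is developed in the paper, and proving one for general nice amenable permutation groups (with non-trivial stabilizers) would be a substantial new theorem rather than a ``manageable technicality.'' The paper avoids tilings entirely. Lemma~\ref{lem:finite_cell_processII} constructs, for an auxiliary \emph{low-entropy} \iid\ process, a cell process with a single eventual cell by a direct percolation construction over a bi-F{\o}lner sequence (each level $j$ uses an independent density-$1/N_j$ percolation $W^j$ restricted to translates of $F_{n_j}$, and the F{\o}lner property guarantees cells coalesce with probability one). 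The result is then transferred from that auxiliary process to the given $Y$ via Theorem~\ref{thm:iid-isomorphic} — and this is precisely where the at-least-three-symbols hypothesis is used, since Theorem~\ref{thm:iid-isomorphic} requires more than two values. Your attribution of the three-symbol assumption to ``extra entropy to realize the markers and the tiling'' misses this: the assumption is there to make the finitary isomorphism transfer work, not to fuel a tiling construction.
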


The proof of \cref{prop:cycle-free-permutation} is based on constructing a certain increasing sequence of partitions of $\V$, captured by the following notion.
We say that an $\cA$-valued process $Y$ admits a \textbf{finitary cell process} if it admits a sequence $\rho=(\rho_j)$ of finitary maps $\rho_j:\cA^\V \to \V^\V$ such that for every $j \in \N$, $g \in \Gamma$ and $v,w \in \V$, almost surely:
\begin{enumerate}[(i)]
\item $\rho_j(g(Y))=T^g(\rho_j(Y))$.
\item If $\rho_j(Y)_v = \rho_j(Y)_w$ then $\rho_{j+1}(Y)_v =  \rho_{j+1}(Y)_w$.
\item $\{u \in \V : \rho_j(Y)_u = v\} \subset V_{j,v}$ for some finite deterministic set $V_{j,v} \subset \V$.\footnote{Other reasonable conditions are also possible. For example, we could have made do with the weaker requirement that the cell $\{u \in \V : \rho_j(Y)_u = v\}$ is a finitary function of $Y$.} 
\end{enumerate}


Note that for any $j \in \N$, the function $\rho_j(Y) \colon \V \to \V$ induces a partition of $\V$ with finite ``cells''.
The partition induced by $\rho_{j+1}(Y)$ is coarser than the partition induced by $\rho_j(Y)$. Thus, we obtain another partition of $\V$ in the limit as $j \to \infty$. We call this the \textbf{eventual partition}. We stress that the eventual partition is not necessarily finitary in any sense: since the partitions becomes coarser as $j$ increases, there will be a finite witness for the event that $v$ and $w$ are in the same eventual partition class, but there need not be such a witness for the complement of this event.

We say that a cell process has \textbf{infinite eventual cells} if, almost surely, the eventual partition has no finite partition classes, or equivalently, if for every $v \in \V$ we have that $|\{u \in \V : \rho_j(Y)_u = \rho_j(Y)_v\}| \to \infty$ as $j \to \infty$. We say that a cell process has a \textbf{single eventual cell} if, almost surely, the eventual partition is $\{\V\}$, or equivalently, if for every $v,w \in \V$ there exists $j \in \N$ such that $\rho_j(Y)_v=\rho_j(Y)_w$. Clearly, if a cell process has a single eventual cell, then it also has infinite eventual cells.

\begin{prop}[Finitary cell process] \label{prop:finite_cell_process}
	Let $\Gamma < \mathit{Perm}(\V)$ be a nice permutation group and let $Y$ be a non-trivial \iid\ process. Then $Y$ admits a finitary cell process having infinite eventual cells. Furthermore, if $\Gamma$ is amenable and $Y$ has at least three symbols, then the cell process can be chosen to have a single eventual cell.
\end{prop}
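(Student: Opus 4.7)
The plan is to construct $\rho = (\rho_j)_{j \ge 0}$ inductively, starting from $\rho_0(Y)_v := v$ (singleton cells) and producing $\rho_{j+1}$ as a $\Gamma$-equivariant finitary coarsening of $\rho_j$. Two ingredients will be extracted from $Y$ and used throughout: a bitwise total order $\prec_Z$ on $\V$ via \cref{lem:inv_total_order} (available because $\Gamma$ is nice and $Y$ is non-trivial iid, hence aperiodic), and a sequence $Y^{(1)}, Y^{(2)}, \ldots$ of mutually independent iid processes, each a finitary factor of $Y$, obtained by splitting the iid randomness of $Y$.

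For each $j$, fix a $\Gamma_{v_0}$-invariant finite ``ball'' $U_j \subset \V$ containing $v_0$, with $U_j \uparrow \V$; these exist because stabilizers have finite orbits. Use $Y^{(j)}$ to designate some cell representatives of $\rho_j$ as ``leaders'' in a finitary $\Gamma$-equivariant way — for instance, declare a representative $r$ a leader if its $Y^{(j)}$-pattern is lex-extremal among all representatives in the $U_j$-neighborhood of $r$. Then merge each non-leader cell with the nearest leader cell (breaking ties using $\prec_Z$) to obtain $\rho_{j+1}$. Equivariance (i) and nesting (ii) are built into the construction, while (iii) is secured by the deterministic bound that each level-$(j{+}1)$ cell sits inside a ball around its leader whose radius is controlled by $U_j$.

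For the basic claim (infinite eventual cells), I would argue that at each step $j$, regardless of the history, the cell of any fixed $v$ has probability at least some $p_j > 0$ of being strictly enlarged; this uses the independence of $Y^{(j)}$ from $\rho_0,\ldots,\rho_j$ together with $\Gamma$-equivariance of the rule. By tuning $U_j$ and the leader rule, one arranges $p_j$ to stay bounded away from $0$, so Borel--Cantelli forces the cell of $v$ to grow unboundedly. For the stronger claim (single eventual cell, $\Gamma$ amenable, $|\cA| \ge 3$), the leader-merging step at level $j$ is replaced by a Rokhlin-type random tiling: using the bi-F\o lner sequence $(V_n)$ from \cref{lem:biFolner} and $n_j \uparrow \infty$, one uses $Y^{(j)}$ to produce, in an equivariant finitary manner, an approximate tiling of $\V$ by translates of $V_{n_j}$, and defines $\rho_{j+1}$ as the common refinement of this tiling with $\rho_j$. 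F\o lner invariance guarantees that for any fixed $v, w \in \V$, the probability at level $j$ that they land in a common tile tends to $1$ as $n_j \to \infty$, so Borel--Cantelli together with nesting (ii) yields that $v, w$ eventually lie in a common cell.

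The main obstacle will be reconciling condition (iii) — each cell lying in a \emph{deterministic} finite set $V_{j,v}$ — with the requirement that cells grow unboundedly (and, in the amenable case, cover $\V$). This forces a delicate calibration: leaders or tile centers must be sparse enough to keep cell diameters under deterministic control, yet dense enough to guarantee uniform positive-probability merging at each step. The restriction $|\cA| \ge 3$ in the amenable case reflects that two symbols provide too rigid a randomness source for producing a $\Gamma$-equivariant finitary tiling by prescribed F\o lner shapes; three symbols supply the extra flexibility needed to resolve the necessary symmetries while respecting both $\prec_Z$ and equivariance.
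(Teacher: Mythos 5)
Your proposal diverges from the paper's argument at several points, and while the overall flavor (inductive, equivariant, finitary coarsening guided by a total order) is right, there are genuine gaps that the paper's construction is specifically designed to avoid.

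For the first part (infinite eventual cells), the paper's construction is entirely \emph{deterministic} given the total order and uses no additional split-off randomness. From the finitary bitwise order $\prec$ (and its approximations $\prec_n$), one defines for each $v$ its ``successor'' $v^-$ as the $\prec$-smallest element in a small enough deterministic neighborhood $V_{N_v}(v)$ of $v$ that contains something $\prec$-smaller than $v$; this is well defined because $\prec$ almost surely has no minimal element (a mass-transport/aperiodicity fact). The edges $(v,v^-)$ form a directed graph $D$ in which every vertex has out-degree one, and the $n$-th cell partition is given by the connected components of an increasing finitary approximation $D_n$ of $D$. Infinitude of eventual cells is then forced combinatorially: the full $D$ contains an infinite forward path from every $v$, so the components of $\bigcup_n D^*_n$ are infinite. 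No Borel--Cantelli argument is needed, and no independence between levels is invoked — which is important because the same construction applies to \emph{any} finite-valued aperiodic process, not just iid ones (this generality is then reused in the proof of \cref{thm:iid-isomorphic}). Your leader-merging scheme, by contrast, relies on an as-yet-unjustified claim that ``the cell of any fixed $v$ has probability at least some $p_j>0$ of being strictly enlarged''; making this precise (uniform in the history, under equivariance, while keeping cells inside deterministic finite sets) is exactly the kind of delicate calibration you flag at the end but do not carry out.

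For the second part (single eventual cell), the architecture of the paper is materially different from what you propose. \cref{lem:finite_cell_processII} does \emph{not} build the cell process out of an arbitrary iid process $Y$ with three symbols; it builds it out of a \emph{specially constructed} low-entropy iid process $W=(W^j)_j$ where each $W^j$ is a Bernoulli percolation of a carefully chosen density $1/N_j$. Cells merge when, in a Følner window around the cell representative, the percolation $W^j$ has \emph{exactly one} occupied site; the Følner property is what makes the merging probability uniformly bounded below for any fixed pair $v,w$, and then Borel--Cantelli gives a single eventual cell. The transfer to the given $Y$ (with $\ge 3$ symbols) is then accomplished by \cref{thm:iid-isomorphic}: $Y$ is finitarily isomorphic to $W$ times an independent iid remainder, and the cell process pulls back along this finitary isomorphism. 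Your proposal tries to skip this reduction and directly ``split the iid randomness of $Y$'' into percolations of prescribed densities, but for an arbitrary countable-valued iid process there is no equivariant finitary factor map producing an independent Bernoulli$(1/N_j)$ percolation at each site unless the marginal distribution has the right atoms — this is precisely what the Keane--Smorodinsky-type theorem is for. Relatedly, your explanation of the $|\cA|\ge 3$ hypothesis (``two symbols give too rigid a randomness source for the tiling'') is not what the paper relies on: the constraint comes entirely from the hypothesis of \cref{thm:iid-isomorphic}, which is in turn needed for the transfer step, not from any obstruction in the percolation/tiling construction itself.

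Finally, a circularity check your proposal leaves implicit: \cref{thm:iid-isomorphic} is proved \emph{after} and \emph{using} \cref{lem:finite_cell_processI}, so the dependency chain is \cref{lem:finite_cell_processI} $\Rightarrow$ (first part of) \cref{prop:cycle-free-permutation} $\Rightarrow$ \cref{thm:iid-isomorphic} $\Rightarrow$ (second part of) \cref{prop:finite_cell_process}. Any direct approach to the second part that bypasses \cref{thm:iid-isomorphic} would need a substitute for that isomorphism step, and your Rokhlin-tiling sketch does not supply one.
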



\subsection{Finitary cell process}

In this section, we give the main constructions towards establishing \cref{prop:finite_cell_process} about the existence of finitary cell processes.
Our construction in the amenable case (leading to a single eventual cell) is different than the one in the general case. We state these in two separate lemmas. In the amenable case, we state a weaker result, which we then use in \cref{sec:concluding-cell-process} to conclude the full strength of \cref{prop:finite_cell_process}.

\begin{lemma}\label{lem:finite_cell_processI}
	Let $\Gamma < \mathit{Perm}(\V)$ be a semi-nice permutation group. Then any finite-valued aperiodic $(\V,\Gamma)$-process admits a finitary cell process having infinite eventual cells.
\end{lemma}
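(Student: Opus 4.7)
The plan is to use the finitary bitwise total order $\prec_Z$ from \cref{lem:inv_total_order} together with an equivariant system of deterministic finite ``balls'' to iteratively coarsen the partition. Since $\Gamma$ is semi-nice, I would fix a basepoint $v_0$ and an enumeration $\V = \{v_0, v_1, \ldots\}$, and set $B_n(v) := g_v \bigcup_{i \le n} \Gamma_{v_0} v_i$ for any $g_v \in \Gamma$ with $g_v(v_0) = v$. Each $B_n(v)$ is finite (the stabilizer $\Gamma_{v_0}$ has finite orbits), independent of the choice of $g_v$, and $\Gamma$-equivariant in the sense that $B_n(gv) = g B_n(v)$; after symmetrizing one obtains a triangle-like inclusion $B_m(u) \subset B_{C(m,n)}(v)$ whenever $u \in B_n(v)$, for some deterministic $C(m,n)$.

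I would construct $\rho_j$ inductively, setting $\rho_0(Y)_v := v$. Given $\rho_j$ whose cells of $\mathcal{P}_j$ are contained in deterministic balls $B_{n_j}(w)$ around their $\prec_Z$-minimum representatives $w$, a natural first attempt for $\rho_{j+1}$ is
\[
\rho_{j+1}(Y)_v := \min_{\prec_Z}\bigl\{\rho_j(Y)_u : u \in B_{N_{j+1}}(\rho_j(Y)_v)\bigr\}
\]
for a growing deterministic sequence $(N_j)$. Equivariance (property~(i)) follows from that of $\prec_Z$, $(B_n)$ and $\rho_j$; nesting (property~(ii)) is immediate because $\rho_{j+1}(Y)_v$ depends on $v$ only through $\rho_j(Y)_v$ and is thus constant on cells of $\mathcal{P}_j$; and containment in a deterministic finite set (property~(iii)) follows inductively via the triangle-like property of the balls.

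The principal obstacle will be verifying the infinite eventual cells condition: two level-$j$ representatives $w, w'$ within $B_{N_{j+1}}$ of each other can produce different ball-minima (each ball may contain a $\prec_Z$-smaller representative missing from the other), so cells need not automatically merge under the single-ball construction as stated. I would address this by upgrading the inductive step to a canonical equivariant matching: at each level, use $\prec_Z$ together with a $\{0,1\}$-valued equivariant labelling of cells extracted from bits of $Z$ to run a ``propose--accept'' style pairing of $\mathcal{P}_j$-cells within range $N_{j+1}$, and fuse matched cells into single cells of $\mathcal{P}_{j+1}$. This keeps cell sizes at most doubling per level, so property~(iii) is preserved; each cell has positive probability of being matched at each step, and an ergodic-decomposition plus Borel--Cantelli argument across levels $j$ would then yield that the cell of any fixed $v$ absorbs new cells infinitely often almost surely, so its size diverges.
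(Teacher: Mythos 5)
Your starting point — the finitary bitwise order $\prec_Z$ from Lemma 5.4 together with an equivariant, deterministic system of finite balls $V_n(v)$ — matches the paper. But the construction and the key step diverge, and the proposal has a real gap at the point you yourself flag.

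The paper avoids the merging difficulty entirely with a deterministic (given $Z$) graph construction rather than a probabilistic pairing. For each $v$, it defines $v^-$ as the $\prec_Z$-least element of the smallest ball $V_{N_v}(v)$ that contains something $\prec_Z$-less than $v$ (well-defined a.s.\ since there is a.s.\ no $\prec_Z$-minimum), yielding a functional digraph $D$ in which every vertex has out-degree $1$ and hence an \emph{infinite} forward path. The level-$n$ cell process is then obtained by truncating $D$ to $D_n$ (keep only edges whose endpoint lies in $V_n(v)$ and whose order relation is already decided by the first $n$ bits $\prec_n$) and taking the component labelling, where the root is the unique out-degree-$0$ vertex. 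Finiteness of the $D_n$-components follows because a directed $D_n$-path is strictly $\prec_n$-monotone and $\prec_n$ has at most $2^n$ levels, which also gives the deterministic containment needed for property~(iii). Since $D_n \uparrow D$ and every vertex has an infinite forward $D$-path, cells grow without bound: infinite eventual cells is a \emph{structural} consequence of $D$, with no probabilistic merging argument.

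By contrast, your ``propose--accept'' repair leans on a probabilistic estimate that is not available here. You would extract at each level some ``fresh'' $\{0,1\}$-labelling from bits of $Z$ and argue via Borel--Cantelli that a fixed cell merges infinitely often. But $Z$ is a finitary factor of the given process $Y$, which is an arbitrary finite-valued aperiodic process, not i.i.d.: there is no guarantee of fresh, conditionally unbiased randomness at each scale, and the bits used for the matching at level $j+1$ are correlated with the bits that determined $\prec_Z$ and the earlier matchings. To run Borel--Cantelli you would need a uniform lower bound on the conditional probability of a merge given the full level-$j$ partition and $Z^{n_j}$, and nothing in the hypotheses supplies that. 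So the ``infinite eventual cells'' step, which you correctly identify as the principal obstacle, is not closed by the proposed repair; the deterministic infinite-forward-path argument is the missing idea.

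One smaller remark: you should also check that $\prec_Z$ has no minimum almost surely (this is needed to define a successor/merge target for every vertex); the paper needs and uses this same fact, and it is worth stating why it holds under aperiodicity.
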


\begin{proof}
	Fix $v_0 \in \V$ and let 
		$V_n$  be a sequence of finite $\Gamma_{v_0}$-invariant sets increasing to $\V$. Such a sequence can be obtained by taking  $V_n=\Gamma_{v_0}V_n'$ where $V_n'$ is any  sequence of finite subsets increasing to $\V$. Denote $V_n(v) := \Gamma_{v_0,v} V_n$. We note that $\{ u \in \V : v \in V_n(u) \}$ is finite for any $v \in \V$.
	
	Let $Y$ be a finite-valued aperiodic $(\V,\Gamma)$-process.
	By \cref{lem:inv_total_order}, $Y$ admits a finitary bitwise total order on $\V$. That is, there exists a $\{0,1\}^\N$-valued process $Z$ such that $\prec_Z$ is a total order on $\V$ and each $Z^n$, the pointwise restriction of $Z$ to $\{1,\dots,n\}$, is a finitary $\Gamma$-factor of $Y$.
	For each $n \ge 1$, let $\prec_n$ be the partial order on $\V$ induced by the lexicographical order on $\{0,1\}^n$ given by $Z^n$. Then each $\prec_n$ is a finitary factor of $Y$ and the sequence $(\prec_n)$ increases to $\prec$.
	
	For $v \in \V$, let $N_v$ be the smallest $n \ge 1$ such that $V_n(v)$ contains an element $\prec$-smaller than $v$. Note that $N_v$ is almost surely finite since there is no $\prec$-minimal element almost surely. Let $v^-$ denote the $\prec$-smallest element in $V_{N_v}(v)$ (this is well defined since $N_v$ is almost surely finite). Let $D$ be the directed graph on $\V$ whose edges are $(v,v^-)$ for all $v \in \V$. Note that every vertex has out-degree 1 in $D$. 
	In particular, for any starting vertex $v \in \V$, the sequence $(v,v^-,v^{--},\dots)$ is well defined and is an infinite
	forward-directed path of distinct vertices in $D$.

	
	For each $n \ge 1$, let $D_n$ be the directed subgraph of $D$ whose vertex set is $\V$ and where the edge $(v,v^-)$ belongs to $D_n$ if and only if $v^- \in V_n(v)$ and $v^- \prec_n v$. Since $V_n(v)$ increases to $\V$ and $\prec_n$ increases to $\prec $, we have that $D_n$ increases to $D$ as $n \to \infty$. Let $D^*_n$ be the non-directed graph underlying $D_n$. 
	We claim that, almost surely, all connected components in $D^*_n$ are finite. Since the out-degrees in $D_n$ are at most 1 and the in-degrees are bounded, this is equivalent to the statement that every (forward or backward) directed path in $D_n$ is finite. Indeed, since $Z^n_v$ takes at most $2^n$ values, any directed path in $D_n$, which in particular constitutes a strictly $\prec_n$-monotone sequence, must be finite (in fact, has length at most $2^n$). Furthermore, the connected component of each vertex $v$ in $D^*_n$ is a finitary function of $Y$.


	Each connected competent of $D^*_n$ is a tree, and since the out-degrees in $D_n$ are at most 1, there is a unique element in each connected competent of $D^*_n$ whose out-degree in $D_n$ is zero.
	The finitary cell process is now obtained by setting $\rho_n(y)_v$ to be the unique element in the component of $v$ in $D^*_n$ whose out-degree in $D_n$ is zero. It is straightforward to check that the requirements of a finitary cell process are satisfied.
\end{proof}

\begin{lemma}\label{lem:finite_cell_processII}
	Let $\Gamma < \mathit{Perm}(\V)$ be a semi-nice unimodular amenable permutation group. Then for any $\epsilon>0$ there exists a (countable-valued) \iid\ process with entropy at most $\epsilon$ which admits a finitary cell process having a single eventual cell.
\end{lemma}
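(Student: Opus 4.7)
The plan is to build a single iid process $Y$ on $\V$ taking values in $\N\cup\{0\}$ with $\Pr(Y_v=k)=q_k$, where $(q_k)$ is a probability distribution with $H((q_k))<\epsilon$ and whose tails $\delta_n:=\sum_{k\ge n}q_k$ decay extremely fast. From $Y$ we simultaneously extract the nested marker sets $M^{(n)}:=\{v\in\V:Y_v\ge n\}$ (of density $\delta_n\to 0$), and a finitary bitwise total order $\prec$ on $\V$, obtained by applying \cref{lem:inv_total_order} to the finite-valued process $\1_{\{Y_v\ge 1\}}$ (the needed aperiodicity of this indicator process holds under the working hypotheses, invoking the aperiodicity clause of ``nice''; more generally one can choose $Y$ inside a distributional class whose indicator is aperiodic).

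Using \cref{lem:biFolner}, fix a bi-F\o lner sequence $V_k=F_kv_0$ of $\Gamma_{v_0}$-invariant finite subsets of $\V$, with $(F_k)$ a symmetric bi-F\o lner sequence in $\overline\Gamma$. Pick radii $R_n$ growing fast enough that $\delta_n|V_{R_n}|\to\infty$ rapidly, and integers $S_n$ so that the compact set $F_{R_1}F_{R_2}\cdots F_{R_n}$ sits inside $F_{S_n}$. Define $\sigma_n(v)$ to be the $\prec$-smallest element of $M^{(n)}\cap V_{R_n}(v)$ (and set $\sigma_n(v):=v$ if this intersection is empty), and let $\rho_n:=\sigma_n\circ\sigma_{n-1}\circ\cdots\circ\sigma_1$. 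Equivariance is immediate; nestedness (ii) is forced by the recursive definition, since $\rho_{n-1}(v)=\rho_{n-1}(w)$ implies $\rho_n(v)=\sigma_n(\rho_{n-1}(v))=\sigma_n(\rho_{n-1}(w))=\rho_n(w)$; and the iterated-F\o lner estimate yields the deterministic cell bound $\{u:\rho_n(u)=v\}\subseteq V_{S_n}(v)$, verifying (iii) and also showing that $\rho_n(v)$ is a finitary function of $Y$ restricted to the finite set $V_{S_n}(v)$.

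The delicate step is that the eventual partition collapses to a single cell. For any $u,w\in\V$ I must show $\rho_n(u)=\rho_n(w)$ for some finite $n$ almost surely. The key observation is the following \emph{dominating-marker event} at level $n$: there exists a level-$n$ marker $m\in V_{R_n-S_{n-1}}(u)$ which is $\prec$-smaller than every other level-$n$ marker in $V_{R_n+S_{n-1}}(u)$. On this event, $\sigma_n$ sends every element of $V_{S_{n-1}}(u)$ to $m$; in particular, once $w\in V_{S_{n-1}}(u)$ (which holds for all large $n$), we get $\rho_n(u)=\rho_n(w)=m$, and nestedness locks this in for all subsequent levels. The probability of the dominating-marker event can be lower-bounded by noting that $\prec$ restricted to the marker set behaves like a random ranking, while $\delta_n|V_{R_n-S_{n-1}}|$ can be made arbitrarily large; tuning $R_n$ and $\delta_n$ makes the failure probabilities summable in $n$, and Borel--Cantelli finishes the argument.

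The main obstacle I anticipate is making the dominating-marker step quantitative: the total order $\prec$ is built from $Y$ itself, so is not strictly independent of the marker locations, and controlling the joint law of $\prec$ with marker placement inside overlapping F\o lner windows requires care. If this direct argument proves too fragile, a backup strategy is to invoke an Ornstein--Weiss quasi-tiling theorem to construct the nested partitions of level-$n$ ``tile size'' $\sim 1/\delta_n$ directly from low-entropy iid randomness, thereby bypassing the need for a global total order on $\V$, and then verify (i), (ii), (iii), and the single-eventual-cell property at the level of the quasi-tilings.
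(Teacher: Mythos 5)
Your construction fundamentally relies on extracting a finitary bitwise total order $\prec$ from the indicator process $\1_{\{Y_v\ge 1\}}$ via \cref{lem:inv_total_order}, but that lemma requires the process to be \emph{aperiodic}, and you justify this by ``invoking the aperiodicity clause of nice''. The trouble is that \cref{lem:finite_cell_processII} only assumes $\Gamma$ is semi-nice, unimodular and amenable --- it does \emph{not} assume $\Gamma$ is nice, and the aperiodicity clause (condition (4)) is precisely the one that is dropped. This is not an accidental weakening: \cref{rem:non_aperiodic_example} exhibits a semi-nice unimodular amenable group for which \emph{no} countable-valued \iid\ process is aperiodic, so the fallback you offer (``choose $Y$ inside a distributional class whose indicator is aperiodic'') is vacuous there. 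The lemma must therefore be provable without any order on $\V$, and your dominating-marker mechanism cannot be salvaged by this route.

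The paper's proof avoids the order entirely and, relatedly, avoids the second difficulty you flag (the dependence between the $\prec$-ranking and the marker placements). Instead of nested level sets $M^{(n)}=\{Y_v\ge n\}$ of a single process, it uses a sequence $(W^j)$ of \emph{mutually independent} Bernoulli percolations of densities $1/N_j$, and the merging rule at level $j$ is: replace $Z^{j-1}_v$ by the unique point of $W^j$ in a fixed F{\o}lner window around $Z^{j-1}_v$, whenever that intersection is a singleton, and otherwise do nothing. Because $W^j$ is fresh randomness independent of $Z^1,\dots,Z^{j-1}$, the probability that two cells merge at level $j$ reduces to an exact computation with two binomials ($|C_j(\tilde v)\cap C_j(\tilde w)|\sim\mathrm{Bin}(\tilde N_j,1/N_j)$ and $|C_j(\tilde v)\triangle C_j(\tilde w)|\sim\mathrm{Bin}(\tilde M_j,1/N_j)$), the F{\o}lner condition gives $\tilde N_j\ge\frac12 N_j$ and $\tilde M_j\le\frac12 N_j$, and one gets a uniform lower bound $c>0$ on the merging probability; Borel--Cantelli then yields a single eventual cell. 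If you want to push your approach through, you would need at minimum to (a) replace the order by an order-free merging criterion, and (b) decouple the randomness across levels --- at which point you essentially recover the paper's construction. Your nested-markers-from-one-process idea also loses the cross-level independence that the clean binomial estimate depends on, which would make even a correctly formulated merging argument harder to quantify.
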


\begin{proof}

For convenience, using \cref{rem:semi_nice_locally_compact}, we replace $\Gamma$ with its closure $\overline{\Gamma}$,  so that $\Gamma_{v_0}$ is compact. 
Let $(F_j)_{j=0}^\infty$ be an increasing bi-F{\o}lner sequence in $\Gamma$ such that $\Gamma=\bigcup_{j=0}^\infty F_j$ and with compact $F_j$'s. By replacing $F_j$ with $\Gamma_{v_0}F_j\Gamma_{v_0}$ we can assume without loss of generality that each $F_j$ is invariant under multiplication by $\Gamma_{v_0}$ from the left and from the right (here we use that $\Gamma_{v_0}$ is a compact subgroup of $\Gamma$). We may similarly also assume that $F_j =F_j^{-1}$ for all $j$ and that $F_0=\{1_\Gamma\}$. Under these assumptions, normalizing the Haar measure so that $m_\Gamma(\Gamma_{v_0})=1$, the Haar measure of each $F_j$ is equal to the cardinality of $F_j v_0$.

We define an increasing sequence of integers $(n_j)_{j=0}^\infty$ by induction. We start by setting $n_0:=0$. Let $j \ge 1$ and suppose that $n_{j-1}$ has been defined. Denote
\[ K_j := (F_{n_{j-1}} \cdots F_{n_2} F_{n_1}) F_j (F_{n_1}F_{n_2}\cdots F_{n_{j-1}}). \]
Using that $(F_{n})_{n=1}^\infty$ is a F{\o}lner sequence, choose $n_j$ large enough so that $N_j := m_\Gamma(F_{n_j})$ satisfies
 \begin{equation}\label{eq:cell-process-folner}
 m_\Gamma\left(K_j F_{n_j}\triangle F_{n_j} \right)< \tfrac14 N_j
 \end{equation}
 and
 \[ H(1/N_j) < \epsilon \cdot 2^{-j} .\]

Let $(W^j)_{j=1}^\infty$ be a sequence of processes which are mutually independent of one another and with $W^j$ being an \iid\ percolation process of density $1/N_j$. Let $Y$ be the process defined by $Y_v := (W^j_v)_{j=1}^\infty$. Note that $h(Y)=\sum_{j=1}^\infty h(W^j)<\epsilon$ and that, in particular, $Y$ is a countable-valued \iid\ process. We will construct the cell process as a finitary factor of $Y$.

We are now ready to define a cell process $\rho = (\rho_j)_{j=1}^\infty$.
Each $\rho_j$ will be a finitary map from $(\{0,1\}^\N)^\V$ to $\V^\V$.
In fact, to define $\rho_j$, we will only use the finite sequence $(W^1,\dots,W^j)$ rather than the entire sequence $(W^1,W^2,\dots)$, so that $\rho_j$ can be thought of as a finitary map from $(\{0,1\}^j)^\V$ to $\V^\V$.
We will define $\rho_j$ by induction.

 For convenience, denote
\[ Z^j := \rho_j(Y) = \rho_j(W^1,\dots,W^j) .\]
We begin by setting
\[ Z^0_v := v \qquad\text{for all }v \in \V .\]
Now let $j \ge 1$ and suppose we have already defined $Z^{j-1}$.
Define
\[ Z^j_v := \begin{cases}
u & \mbox{if } C_j(Z^{j-1}_v) = \{u\} \\
Z^{j-1}_v & \mbox{if } |C_j(Z^{j-1}_v)| \neq 1
\end{cases}
,\]
where
\[ C_j(v) := W^j \cap \Gamma_{v_0,v} F_{n_j} v_0 .\]
This completes the definition of the cell process.

We now show that the desired properties are satisfied.
First, we claim that for every $j \in \N$ and $v,w \in \V$:
\begin{enumerate}
\item $T^g(\rho_j(\omega))=\rho_j(g(\omega))$ for all $g \in \Gamma$ and $\omega \in (\{0,1\}^j)^\V$.
\item If $Z^j_v =  Z^j_w$ then $ Z^{j+1}_v =  Z^{j+1}_w$.
\item $Z^j_v$ belongs to $\Gamma_{v_0,v} F_{n_1}F_{n_2}\cdots F_{n_j} v_0$ and depends only on the values of $Y$ on this set.
\item If $Z^j_w = v$ then $w \in \Gamma_{v_0,v} F_{n_j} \cdots F_{n_2}F_{n_1} v_0$.
\end{enumerate}
These properties are easily verified by induction: for (1) we use that $g(C_j(v)) = g(W^j) \cap \Gamma_{v_0,g(v)} F_{n_j} v_0$, for (3) we use that $w \in \Gamma_{v_0,v} F v_0$ implies $\Gamma_{v_0,w} \subset \Gamma_{v_0,v} F \Gamma_{v_0}$, and for (4) we also use that $w \in \Gamma_{v_0,v} F v_0$ if and only if $v \in \Gamma_{v_0,w}F^{-1}v_0$. Note that $K v_0$ is a finite subset of $\V$ for any compact subset $K$ of $\Gamma$ (since it is the image of a compact set under the continuous map $g \mapsto g(v_0)$). This shows that $\rho$ is a finitary cell process.

It remains to show that the cell process has a single eventual cell.
Fix $v,w \in \V$ and let $j \in \N$ be large enough so that $v \in \Gamma_{v_0,w} F_j w$. Denote $\tilde v := Z^{j-1}_v$ and $\tilde w := Z^{j-1}_w$.
Define
\[ \tilde N_j := | \Gamma_{v_0,\tilde v}F_{n_j} v_0 \cap \Gamma_{v_0,\tilde w}F_{n_j}v_0| \qquad\text{and}\qquad \tilde M_j := | \Gamma_{v_0,\tilde v}F_{n_j} v_0  \triangle \Gamma_{v_0,\tilde w}F_{n_j}v_0| .\]
Let $g \in \Gamma_{\tilde w, v_0} \Gamma_{v_0,\tilde v}$ and note that
\[ \tilde N_j = m_{\Gamma}\left(g F_{n_j} \cap F_{n_j} \right) \le N_j \qquad\text{and}\qquad  \tilde M_j = m_\Gamma\left(g F_{n_j}\triangle F_{n_j} \right) \le 2N_j .\]
To get a lower bound on $\tilde N_j$, note that $\tilde v \in \Gamma_{v_0,v} F_{n_1}\cdots F_{n_{j-1}} v_0$ and $\tilde w \in \Gamma_{v_0,w} F_{n_1}\cdots F_{n_{j-1}} v_0$ imply that $\Gamma_{v_0,\tilde v} \subset \Gamma_{v_0,v} F_{n_1}\cdots F_{n_{j-1}}$ and $\Gamma_{v_0,\tilde w} \subset \Gamma_{v_0,w} F_{n_1}\cdots F_{n_{j-1}}$, and that $v \in \Gamma_{v_0,w} F_j w$ implies that $\Gamma_{v_0,v} \subset \Gamma_{v_0,w} F_j$, so that
\[ g \in \Gamma_{\tilde w, v_0} \Gamma_{v_0,\tilde v} \subset (F_{n_{j-1}} \cdots F_{n_1}) \Gamma_{w,v_0} \Gamma_{v_0,v} (F_{n_1}\cdots F_{n_{j-1}}) \subset K_j .\]
Thus, using~\eqref{eq:cell-process-folner}, we get that
\[ \tilde M_j = m_\Gamma(g F_{n_j} \setminus F_{n_j}) + m_\Gamma(g^{-1} F_{n_j} \setminus F_{n_j}) \le 2m_\Gamma(K_j F_{n_j} \triangle F_{n_j}) \le \tfrac12 N_j \quad\text{so that}\quad \tilde{N}_j \ge \tfrac 12 N_j.\]
Note that $|C_j(\tilde v) \cap C_j(\tilde w)|$ and $|C_j(\tilde v) \triangle C_j(\tilde w)|$
are independent random variables (conditionally on $(Z^1,\dots,Z^{j-1})$) whose distributions are $\text{Bin}(\tilde N_j,\frac{1}{N_j})$ and $\text{Bin}(\tilde M_j,\frac{1}{N_j})$, respectively.
Thus, given $(Z^1,\dots,Z^{j-1})$, the probability that 
\[|C_j(\tilde v) \cap C_j(\tilde w)| = 1 \qquad\mbox{and}\qquad |C_j(\tilde v) \triangle C_j(\tilde w)|=0\]
is
\[ \tfrac{\tilde N_j}{N_j}\left(1-\tfrac{1}{N_j}\right)^{\tilde{M}_j+\tilde N_j -1} \ge \min_{n \ge 1} \tfrac12(1-\tfrac{1}{n})^{3n}  =: c > 0 .\]
This yields a uniform lower bound on the conditional probability that $Z^j_v = Z^j_w$ given $(Z^1,\dots,Z^{j-1})$. Thus, the (unconditional) probability that $Z^j_v \neq Z^j_w$ is exponentially small in $j$. In particular, almost surely, $Z^j_v = Z^j_w$ for large enough $j$. This shows that there is a single eventual cell.
\end{proof}

\subsection{Finitary cycle-free permutations}

In this section, we give the main construction towards establishing \cref{prop:cycle-free-permutation} about the existence of a finitary cycle-free permutation and $\Z$-type total order, assuming the existence of a finitary cell process.

\begin{lemma}\label{lem:cycle-free-permutation}
Let $\Gamma < \mathit{Perm}(\V)$ be a semi-nice unimodular permutation group. Let $Y$ be a process that admits a total order $\prec$ on $\V$ as a finitary factor and that also admits a finitary cell process $\rho$ having infinite eventual cells. Then $Y$ admits a cycle-free permutation of $\V$ as a finitary factor. Furthermore, if the cell process has a single eventual cell, then the permutation almost surely has a single orbit.
\end{lemma}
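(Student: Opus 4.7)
The plan is to construct $\pi$ as the pointwise limit of a sequence of permutations $(\sigma_j)_{j \ge 1}$ of $\V$, each a finitary factor of $Y$, arranged so that the cycles of $\sigma_j$ are exactly the cells of the partition of $\V$ induced by $\rho_j$. I would define $\sigma_1$ by letting it traverse each level-$1$ cell cyclically in $\prec$-increasing order, wrapping from the $\prec$-max back to the $\prec$-min. Given $\sigma_{j-1}$, for each level-$j$ cell $C$ whose level-$(j-1)$ subcells, sorted by $\prec$-minimum, are $C^{(1)},\ldots,C^{(m)}$, I splice the $m$ disjoint cycles $\sigma_{j-1}|_{C^{(i)}}$ into a single $\sigma_j$-cycle on $C$: choose cut points $b_i := \max_\prec C^{(i)}$ and set $\sigma_j(b_i) := \sigma_{j-1}(b_{i+1})$ (indices modulo $m$), leaving all other values of $\sigma_{j-1}$ unchanged.

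The key observation is that $\sigma_j(v) = \sigma_{j-1}(v)$ whenever $v \neq \max_\prec C_{j-1}(v)$. Since the cells $C_j(v)$ are monotonically increasing, once $v$ is not the $\prec$-maximum of its cell at some level, it remains so at every subsequent level. Therefore $\sigma_j(v)$ stabilizes provided $v$ is eventually not the $\prec$-max of $C_j(v)$; a symmetric argument for $\sigma_j^{-1}$ requires that $v$ is eventually not the $\prec$-min of $C_j(v)$.

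The main obstacle is ensuring these stabilization properties. For this, I would arrange (by a preliminary reduction if necessary) that $\prec$ is a finitary bitwise total order, induced by an equivariant assignment of \iid\ bits, with the property that almost surely no infinite subset of $\V$ has a $\prec$-maximum or $\prec$-minimum; such a bitwise order can be realized as a finitary factor of $Y$ using the cell-process data (which provides ample equivariant randomness) together with the given total order. Since $C_\infty(v) := \bigcup_j C_j(v)$ is infinite by hypothesis, it follows that almost surely $v$ is neither the $\prec$-max nor the $\prec$-min of $C_j(v)$ for $j$ large enough, so both $\sigma_j(v)$ and $\sigma_j^{-1}(v)$ stabilize.

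Setting $\pi(v) := \lim_{j \to \infty} \sigma_j(v)$ then yields an equivariant, finitary, permutation-valued factor map (equivariance and finitariness are inherited from $\rho$ and $\prec$ together with the local nature of the splicing). The $\pi$-orbit of any $v$ is contained in $C_\infty(v)$, and a trajectory argument---tracing any $w \in C_\infty(v)$ back to $v$ along $\sigma_j$ for $j$ large enough that all intermediate vertices have already stabilized---shows that the orbit equals $C_\infty(v)$. In particular, all $\pi$-orbits are infinite, and when there is a single eventual cell, $\pi$ has a single orbit equal to $\V$.
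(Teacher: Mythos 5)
Your construction is essentially the paper's (the splicing of sub-cells into longer paths/cycles is the same device; the paper uses Hamiltonian paths rather than cycles, but this is only a cosmetic difference). You also correctly isolate the exact requirement that makes the limit a permutation: almost surely, every $v$ is eventually neither the $\prec$-max nor the $\prec$-min of $C_j(v)$, equivalently, no eventual cell has a $\prec$-maximum or $\prec$-minimum.

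However, the step you propose for establishing that requirement does not work, and this is the crux of the lemma. You propose a ``preliminary reduction'' replacing $\prec$ by a bitwise order $\prec'$ with the property that ``almost surely no infinite subset of $\V$ has a $\prec'$-maximum or $\prec'$-minimum.'' This property is false for any total order: for any $v$, the (almost surely infinite) set $\{u \in \V : Z_u \preceq' Z_v\}$ has $v$ as its $\prec'$-maximum. What one actually needs is the much weaker statement that the countably many \emph{specific} random sets $C^\infty_v$ have no $\prec$-max/min, and this cannot be obtained for free by ``adding randomness'': the lemma is stated for an arbitrary process $Y$, the cell process is a deterministic function of $Y$ (it does not supply new randomness), and any new order $\prec'$ you construct must itself be a finitary factor of $Y$, hence cannot generally be made ``independent'' of $\rho$. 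The paper instead proves the needed no-max/no-min statement \emph{for the given order $\prec$} via the mass transport principle: transporting unit mass from each $u \in C^\infty_v$ to the $\prec$-maximal element of $C^\infty_v$ (when it exists), unimodularity forces expected mass in $=$ expected mass out $\le 1$, while infinite eventual cells would make the mass into a $\prec$-max infinite; hence a.s.\ no eventual cell has a $\prec$-max (and by symmetry no $\prec$-min). Notice that your argument never uses the unimodularity hypothesis, which is a strong signal that something is missing: it is precisely unimodularity, via mass transport, that rules out extremal elements in eventual cells. Replacing your reduction by this mass-transport step would make the proof complete.
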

\begin{proof}
For $v \in \V$ and $j \in \N$, let $C^j_v := \{ w \in \V~|~ \rho_j(Y)_w = \rho_j(Y)_v\}$ denote the cell of $v$ at level~$j$. The definition of a finitary cell process ensures that this set is finite, and furthermore that $Y \mapsto C^j$ is a finitary factor map.

Let $C^\infty_v$ be the eventual cell of $v$. We claim that there is no $\prec$-maximal element in $C^\infty_v$. To see this, consider the function that transports a unit of mass from $u$ to $v$ whenever $u \in C^\infty_v$ and $v$ is the $\prec$-maximal element of $C^\infty_v$. By unimodularity, the expected mass in is equal to the expected mass out, which is clearly at most 1. Since $\rho$ has infinite eventual cells, $|C^\infty_v|=\infty$ almost surely. In particular, on the event that $v$ is the $\prec$-maximal element of $C^\infty_v$, the total mass into $v$ is infinite. We thus conclude that this occurs with probability zero.

We now define a cycle-free permutation $S$ of $\V$.
We define $S$ by induction on $j$, so that after the $j$-th step, for each $v$, for all but one $u \in C^j_v$ (which we denote by $u^j_v$), the successor $S(u)$ of $u$ is defined and belongs to $C^j_v$, and similarly for the predecessor $S^{-1}(w)$ (with the element whose predecessor is undefined denoted by $w^j_v$).
We begin with $j=0$ for which we assume that $C^0_v=\{v\}$ so that there is nothing to define. Now fix $j \ge 1$ and $v \in \V$, and suppose that $C^j_v$ consists of cells $C_1,\dots,C_k$ of $C^{j-1}$, enumerated in such a way that $v_1 \prec v_2 \prec \cdots \prec v_k$, where $v_i$ is the $\prec$-minimum in $C_i$. For $1 \le i <k$, we set the successor of $u^{j-1}_{v_i}$ to be $w^{j+1}_{v_i}$ (the use of $v_i$ here is just for convenience; any element in $C_i$ would do). Then every $u \in C^j_v$ has a well defined successor, except for $u^j_v=u^{j-1}_{v_1}$, and every $w \in C^j_v$ has a well defined predecessor, except for $w^j_v=w^{j-1}_{v_k}$.
This completes the definition of $S$. It is straightforward that $S$ is a cycle-free permutation and that it is a finitary factor of $Y$.

Assume now that $\rho$ has a single eventual cell. Since, for every $j$ and $v$, the permutation $S$ cycles through all elements of $C^j_v$ (in the sense that there exists $u \in C^j_v$ such that $\{S^i(u)\}_{i=0}^{|C^j_v|-1} = C^j_v$), it follows that the permutation $S$ consists of a single orbit.   
\end{proof}

\subsection{Concluding \cref{prop:finite_cell_process}, \cref{prop:cycle-free-permutation} and \cref{thm:iid-isomorphic}}
\label{sec:concluding-cell-process}

So far, we have established \cref{lem:finite_cell_processI} and \cref{lem:finite_cell_processII} about the existence of finitary cell processes, and \cref{lem:cycle-free-permutation} about the existence of finitary cycle-free permutations and finitary $\Z$-type orders.
We show here how this yields \cref{thm:iid-isomorphic}. We then show how this theorem, together with the lemmas, establishes \cref{prop:finite_cell_process} and \cref{prop:cycle-free-permutation}.

\begin{proof}[Proof of \cref{thm:iid-isomorphic}]
Let $X$ and $Y$ be two countable-valued \iid\ processes having equal entropy and such that $X_v$ and $Y_v$ each take at least three values.
Let $0< \epsilon< 1$ be sufficiently small.
By splitting two atoms in $X$ into three atoms, we can obtain an \iid process $X'$ having an atom of mass exactly $\epsilon$ and the same entropy as $X$.
Similarly, by splitting two atoms in $Y$ into three atoms, we can obtain an \iid\ process $Y'$ having an atom of mass exactly $\epsilon$ and the entropy as $Y$.
 Thus, to show that $X$ and $Y$ are finitarily $\Gamma$-isomorphic, it suffices to show that any two \iid\ processes with equal entropy and an atom of equal weight are finitarily $\Gamma$-isomorphic.

We may thus assume without loss of generality that $X$ and $Y$ have an atom of equal weight. By renaming the symbols, we may further assume that they have a symbol $a$ of equal weight $p$. Let $Z$ be a $\{0,1\}$-valued \iid\ process of density $p$. By \cref{lem:finite_cell_processI}, \cref{lem:inv_total_order} and the first part of \cref{lem:cycle-free-permutation}, $Z$ admits a cycle-free permutation $S_Z$ of $\V$ as a finitary factor. Let $Z(X)$ and $Z(Y)$ denote the marker processes for $X$ and $Y$ given by the occurrences of $a$. We identify $Z(X)$, $Z(Y)$ and $Z$. Conditioned on $Z$, using the $\Z$-type order, we obtain a collection of $\Z$-processes: for each $v \in \V$ such that $Z_v=0$ (equivalently, $X_v \ne a$), we have the process $X^{(Z,v)}_n := X_{\tilde S_Z^n(v)}$, where $\tilde S_Z(v):= S_Z^{N_Z(v)}(v)$ and $N_Z(v)$ is the smallest positive integer such that $Z_{S_Z^{N_Z(v)}(v)}=0$. A routine Poincar\'e recurrence argument implies that $\tilde S_Z(v)$ is almost surely well defined when $Z_v=0$. Then each $X^{(Z,v)}$ is an \iid\ process (conditionally on $Z$), with a fixed deterministic distribution. Furthermore, given $Z$, for any subset of $F \subset  \{ v \in \V : Z_v=0\}$ having no pair of distinct elements in the same $S_Z$-orbit, the $\Z$-processes $(X^{(Z,v)})_{v \in F}$ are jointly independent.  We similarly define $Y^{(Z,v)}$, which is also an \iid\ process with the same entropy as $X^{(Z,v)}$. We may now apply the Keane--Smorodinsky finitary isomorphism (or, alternatively, for a self-contained proof, we may appeal to \cref{thm:relative_smorodinsky_k_dependent}) to each pair of these \iid\ $\Z$-processes.
\end{proof}

\begin{proof}[Proof of \cref{prop:finite_cell_process}]
The first part is just \cref{lem:finite_cell_processI}. The second part follows from \cref{lem:finite_cell_processII} and \cref{thm:iid-isomorphic}.
\end{proof}

\begin{proof}[Proof of \cref{prop:cycle-free-permutation}]
The first part follows from \cref{lem:finite_cell_processI}, \cref{lem:inv_total_order} and the first part of \cref{lem:cycle-free-permutation}. The second part follows from \cref{lem:finite_cell_processII}, \cref{thm:iid-isomorphic}, \cref{lem:inv_total_order} and the second part of \cref{lem:cycle-free-permutation}.
\end{proof}

\section{Completing the proof of \cref{thm:main}}\label{sec:proof_of_thm_main}

We now complete the proof of \cref{thm:main} by deducing it from the case of $\Z$-processes, which is  \cref{thm:realtive_efficient_finitary_coding_Z}.
The general idea is to use the existence of an equivariant finitary $\Z$-type total order on $\V$ guaranteed by \cref{prop:cycle-free-permutation} to move from $(\V,\Gamma)$-processes to $\Z$-processes, where we can apply \cref{thm:realtive_efficient_finitary_coding_Z}, and then return to $(\V,\Gamma)$-processes. This is rather straightforward when $\Gamma$ acts freely on $\V$, in which case one may ``record the increments'' obtained from the $\Z$-type total order (the increment at $v \in \V$ is the unique element $g \in \Gamma$ which sends $v$ to its successor). \cref{lem:from-Gamma-to-Z} below will allow us to handle the general case. Roughly speaking, it says that a $(\V,\Gamma)$-process which admits an equivariant finitary $\Z$-type total order on $\V$ can be finitarily identified with a certain $\Z$-process.

Let $W$ be a $(\V,\Gamma)$-process that admits a $\Z$-type total order $\prec_W$ on  $\V$ as a finitary factor.
Let $P_W \colon \V \to \V$ be the permutation of $\V$  defined by letting $P_W(v)$ be the $\prec_W$-successor of $v$. Given a factor $\tilde W$ of $W$, 
we fix $v_0 \in \V$ and define a $\Z$-process $\tilde W^\prec$ by
\[ \tilde W^{\prec}_n := \tilde W_{P_W^n(v_0)} .\]
Observe that whenever $\tilde W$ is a finitary factor of $W$, the map $W \mapsto \tilde W^\prec$ is also finitary.
For $v \in \V$, define $I^W(v_0,v) \in \Z$ by
\[ I^W(v_0,v) := n \mbox{ if } P_W^n(v_0)= v.\]
It can be directly verified that for every $g \in \Gamma$ and $n \in \mathbb{Z}$ we have
\begin{equation} \label{eq:prec_equiv}
g(\tilde W)^{g(\prec)}_n = \tilde W^\prec_{n + I^{W}(v_0,g^{-1}(v_0))},
\end{equation}
where $g(\prec) := \prec_{g(W)}$ and
 $g(\tilde W)^\prec_n :=  g(\tilde W)_{P_{g(W)}^n(v_0)}$.
In particular, for every $g \in \Gamma_{v_0}$ we have $g(\tilde W)^{g(\prec)} = \tilde W^\prec$, so $\tilde W^\prec$ is actually measurable with respect to $\Gamma_{v_0} W$.



Below is a simple auxiliary result that we will use: 
\begin{lemma}\label{lem:big_Z_process}
	Any aperiodic and ergodic $\mathbb{Z}$-process admits as a finitary factor an $\N$-valued process $R= (R_n)_{n \in \Z}$ such that almost surely $R_n > 2n$ for infinitely many positive $n$'s. 
\end{lemma}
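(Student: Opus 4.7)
The plan is to construct $R$ by cascading a nested family of finitary marker processes for $X$ with geometrically decaying densities, and deduce the ``infinitely often'' property from Birkhoff's ergodic theorem combined with a Markov-type estimate.

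Concretely, starting from any non-trivial finitary marker process of $X$ (which exists by aperiodicity, e.g.\ an occurrence marker of some positive-probability finite pattern), I would iteratively refine \cref{lem:low-density-markers} to produce a nested sequence $M^{(1)} \supseteq M^{(2)} \supseteq \cdots$ of finitary marker processes for $X$ whose densities $p_k := \Pr(M^{(k)}_0 = 1)$ satisfy $c\alpha^{-k} \le p_k \le \alpha^{-k}$ for some fixed $c>0$ and $\alpha > 1$ (for concreteness, $\alpha=4$). Setting $K_n := \max\{k \ge 1 : M^{(k)}_n = 1\}$, with the convention $K_n := 0$ if $M^{(1)}_n = 0$, the summability $\sum_k p_k < \infty$ combined with the first Borel--Cantelli lemma yields $K_n < \infty$ a.s.; moreover, the nestedness of the $M^{(k)}$'s makes $\{K_n = k\} = \{M^{(k)}_n = 1,\, M^{(k+1)}_n = 0\}$ a finitary event, so $K_n$ is a finitary factor of $X$. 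I would then define $R_n := \alpha^{2(K_n+1)}$, which is $\N$-valued and a finitary factor of $X$.

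To deduce that $R_n > 2n$ infinitely often, introduce $T_k := \min\{n \ge 1 : M^{(k)}_n = 1\}$, the first positive occurrence of the $k$-th marker. If the construction is arranged so that $\E T_k = O(1/p_k) = O(\alpha^k)$---which I expect to arrange by keeping the gap tails of the $M^{(k)}$ under control (say, finite second moment)---then Markov's inequality gives $\Pr(T_k > \alpha^{2k}) = O(\alpha^{-k})$, which is summable in $k$. A second application of Borel--Cantelli then implies that almost surely $T_k \le \alpha^{2k}$ for all sufficiently large $k$; for such $k$ we have
\[
R_{T_k} \ge \alpha^{2(K_{T_k}+1)} \ge \alpha^{2(k+1)} = \alpha^2 \cdot \alpha^{2k} > 2 \alpha^{2k} \ge 2 T_k,
\]
using $\alpha \ge 2$. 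Since $R_0$ is almost surely finite, the values $T_k$ must diverge, thus yielding infinitely many distinct positive integers $n$ with $R_n > 2n$.

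The main obstacle I anticipate lies in the first step: controlling the densities $p_k$ from below while keeping the gap tails of $M^{(k)}$ tame. As stated, \cref{lem:low-density-markers} produces dilutions via patterns of the form $10^m$ of small but uncontrolled density. To obtain a matching lower bound one has to select the dilution pattern at each iteration based on its probability in the current marker process falling within a prescribed interval, which is possible because an aperiodic ergodic process admits finitary events of approximately any prescribed positive probability. This same care is what allows one to ensure the second-moment control on gaps needed for the bound $\E T_k = O(1/p_k)$.
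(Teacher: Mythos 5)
Your construction — a nested cascade of finitary marker processes $M^{(1)} \supseteq M^{(2)} \supseteq \cdots$ with geometrically decaying densities, and $R_n$ defined from the level $K_n$ of the deepest marker at $n$ — is a genuinely different route from the paper's. The paper instead takes a single $\N$-valued finitary factor $Y$ with $\sup_{n>0} Y_n = \infty$ (built from the run-length of agreement with a fixed sequence $a$ in the support) and then recodes $Y$ by an increasing function $f$, where $f$ is determined by an \emph{inductively chosen} sequence $(n_k)$ that pushes out far enough for the ergodic averages of $\1_{\{Y_n \ge n_k\}}$ to have concentrated.

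The gap in your argument is the moment assumption. You need $\sum_k \Pr(T_k > \alpha^{2k}) < \infty$, which you propose to obtain from $\E T_k = O(1/p_k)$ via second-moment control on the gaps of $M^{(k)}$. But $X$ is an \emph{arbitrary} aperiodic ergodic process, and its marker processes can have heavy-tailed gap distributions that no dilution scheme will repair; there is no reason that an arbitrary aperiodic ergodic $X$ admits finitary markers with prescribed density \emph{and} uniformly controlled second moments. More to the point, you are fixing the time scale $\alpha^{2k}$ a priori and then hoping the process conforms — this is exactly the step the paper sidesteps by choosing the time scales adaptively.

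The good news is that your cascade can be salvaged without any moment condition at all, and the fix is precisely the paper's trick transplanted into your setting. You only ever use that $T_k$ is almost surely finite, so $\Pr(T_k > N) \to 0$ as $N \to \infty$; choose an increasing sequence $(N_k)$ with $\Pr(T_k > N_k) \le 2^{-k}$, and redefine $R_n := 2N_{K_n+1}+1$ (a deterministic function of the finitary $K_n$, hence still finitary and $\N$-valued). By Borel--Cantelli, almost surely $T_k \le N_k$ for all large $k$; for such $k$, setting $n := T_k$ gives $K_n \ge k$, hence
\[
R_n = 2N_{K_n+1}+1 \ge 2N_{k+1}+1 > 2N_k \ge 2T_k = 2n.
\]
Since $p_k \to 0$ forces $T_k \to \infty$ almost surely (otherwise some $K_n$ would be infinite), this yields infinitely many positive $n$ with $R_n > 2n$. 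Once you make the scaling adaptive in this way, the lower bound $p_k \ge c\alpha^{-k}$ is also no longer needed — only $\sum_k p_k < \infty$ (for $K_n < \infty$) and the nesting.

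One further caution: even the bound $p_k \ge c\alpha^{-k}$ that you assume is not obviously achievable from \cref{lem:low-density-markers} as stated, since that lemma gives no lower bound on the density of the dilution. Your suggested work-around (tuning the occurrence pattern to land in a prescribed probability window) is plausible but would need to be carried out carefully. Happily, as noted above, the repaired argument makes this unnecessary.
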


The specific choice of $2n$ is what we need for our application, but is otherwise not important and can be replaced by any increasing function $g(n)$. Indeed, the process $R'_n := g(R_n)$ satisfies that almost surely $R'_n > g(n)$ for infinitely many positive $n$'s.

\begin{proof}
	Let $X$ be an aperiodic and ergodic $\mathbb{Z}$-process.
	We claim that $X$ has an $\N$-valued finitary factor $Y$ satisfying that $\sup_{n>0} Y_n = \infty$ almost surely. Indeed, if the process $X$ itself takes infinitely many values, this is trivial. Otherwise, choose a sequence $a = (a_n)_{n \in \Z}$ in the support of $X$, and define $Y_n := \max\{ k \ge 0 : X_{[n+1,n+k]}=a_{[1,k]}\}$. 
	
	We can now inductively define an increasing sequence of integers $(n_k)_{k=1}^\infty$ with $n_1=1$ such that
	\[ \Pr\left(\sum_{n=n_k}^{n_{k+1}}\1_{\{Y_n \ge n_k\}} \ge k\right) \ge 1 - \frac1k.\]
	Next define $f\colon\N\to\N$ by $f(j):=2n_{k+1}+1$ if $j \in [n_k,n_{k+1})$.
	Finally, define the process $R$ by $R_n:=f(Y_n)$, and observe that for every $k \in \N$, with probability at least $1- \frac 1k$, there are at least $k$ indices $n \in [n_k,n_{k+1}]$ with $R_n > 2n_{k+1} \ge 2n$.
\end{proof}

\begin{lemma}\label{lem:from-Gamma-to-Z}
Let $\Gamma < \mathit{Perm}(\V)$ be a semi-nice permutation group. Let $W$ be a countable-valued \iid\ $(\V,\Gamma)$-process 
that admits a $\Z$-type total order $\prec_W$ on $\V$ as a finitary factor. 
Then $W$ admits a finitary factor $\tilde W$ taking values in a discrete countable set such that  $\Gamma_{v_0} W$ is a topo-finitary function of $\tilde W^\prec$.
\end{lemma}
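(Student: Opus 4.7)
The plan is to construct $\tilde W$ so that its $\prec_W$-readout $\tilde W^\prec$ encodes, at adaptively chosen scales, enough local $(W,\prec_W)$-data around each site to pin down the global configuration up to the action of $\Gamma_{v_0}$.

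\textbf{Construction.} I would fix an exhaustion $V_0\subset V_1\subset\cdots$ of $\V$ by finite $\Gamma_{v_0}$-invariant sets with $v_0\in V_0$; this exists since $\V$ is countable and the $\Gamma_{v_0}$-orbits on $\V$ are finite. Set $V_k(v):=\Gamma_{v_0,v}V_k$, which is finite, $\Gamma_v$-invariant, and equivariant in the sense $h\cdot V_k(v)=V_k(h(v))$. Because $\prec_W$ is a finitary factor of $W$ and iterating $P_W$ between two points is stop-finitary, each offset $I^W(v,u)$ for $u\in V_k(v)$ is a stop-finitary factor of $W$. Define
\[
\tilde W^{(k)}_v \;:=\; \bigl[(W_u,\,I^W(v,u))_{u\in V_k(v)}\bigr]_{\Gamma_v},
\]
the $\Gamma_v$-orbit of the displayed tuple; this is a $\Gamma$-equivariant finitary factor of $W$ taking countably many values. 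Extract from $W$ a $\Gamma$-equivariant finitary factor $K=(K_v)_{v\in\V}$ taking each value $k\in\N$ at a set of positive density along $\prec_W$ almost surely; such a $K$ exists because $W$ is a non-trivial \iid\ process. Finally set
\[
\tilde W_v \;:=\; \bigl(K_v,\;\tilde W^{(K_v)}_v\bigr),
\]
which is a $\Gamma$-equivariant finitary factor of $W$ taking values in a discrete countable set.

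\textbf{Decoding and rigidity.} Writing $\phi:=I^W(v_0,\cdot)\colon\V\to\Z$ and $v_n:=\phi^{-1}(n)$, the $n$-th coordinate of $\tilde W^\prec$ exhibits $K_{v_n}$ and, after shifting the integer coordinate by $n$, the multiset $M_n:=\{(W_u,\phi(u)) : u\in V_{K_{v_n}}(v_n)\}$. Since every $k\in\N$ appears infinitely often in $(K_{v_n})_n$ almost surely and $V_k\uparrow\V$, the family $(M_n)_n$ determines the $\Z$-process $\xi_m:=W_{\phi^{-1}(m)}$ together with the subset $\phi(V_k(v))\subset\Z$ for every $v\in\V$ and every $k\ge 0$. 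If two configurations $W,W'$ produce the same $\tilde W^\prec$, matching first coordinates produces a bijection $\psi:=\phi^{-1}\circ\phi'\colon\V\to\V$ fixing $v_0$ with $W'=W\circ\psi$, where $\phi':=I^{W'}(v_0,\cdot)$; matching the subset data at all scales forces $\psi(V_k(v))=V_k(\psi(v))$ for every $v\in\V$ and $k\ge 0$. Approximating $\psi|_{V_k}$ by elements of $\Gamma_{v_0}$ and letting $k\to\infty$, $\psi$ lies in the pointwise closure $\overline{\Gamma}_{v_0}$ in $\mathit{Perm}(\V)$; since, by \cref{rem:semi_nice_locally_compact}, $\Gamma$- and $\overline{\Gamma}$-orbits of configurations coincide, $W'\in\Gamma_{v_0}W$. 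The reconstruction map $\tilde W^\prec\mapsto\Gamma_{v_0}W$ is continuous on the full-measure event that arbitrarily large scales appear in both tails of $\tilde W^\prec$, which is exactly the topo-finitary conclusion.

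\textbf{Main obstacle.} The most delicate step is the rigidity: verifying that the commutation relations $\psi(V_k(v))=V_k(\psi(v))$ for all $v,k$ together with $\psi(v_0)=v_0$ force $\psi\in\overline{\Gamma}_{v_0}$. A naive choice of exhaustion can admit ``parasitic'' bijections (for instance, for $\Gamma$ being translations on $\Z^d$ the box exhaustion admits extra dihedral symmetries outside $\Gamma$), so in practice one has to enrich $\tilde W^{(k)}_v$ beyond the bare $\Gamma_v$-orbit, for example by recording the unique increment $\gamma\in\Gamma$ with $\gamma(v)=P_W(v)$ when $\Gamma$ acts freely, and more generally by exploiting the \iid\ randomness of $W$ to a.s.\ break any parasitic symmetries. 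Verifying this enrichment produces the required rigidity for every semi-nice $\Gamma$ is the main technical content of the lemma.
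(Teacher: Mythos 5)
Your high-level strategy — use $\prec_W$ to read off a $\Z$-indexed stream of local data at adaptively chosen scales, then decode $\Gamma_{v_0}W$ from that stream — is the same as the paper's, but the specific choice of what to record at each site differs from the paper's in a way that matters, and your own ``Main obstacle'' paragraph correctly identifies where the argument breaks: the rigidity step is not established, and you explicitly defer it (``Verifying this enrichment produces the required rigidity for every semi-nice $\Gamma$ is the main technical content of the lemma''). So this is an incomplete proof, not a complete alternative one.

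Concretely, your $\tilde W^{(k)}_v$ is the $\Gamma_v$-\emph{orbit} of the tuple $(W_u,I^W(v,u))_{u\in V_k(v)}$, so it records only the ``shape'' of the labelled window as seen abstractly from $v$, not the identity of the vertices that the window occupies in $\V$. Taking the $\Gamma_v$-quotient discards exactly the information you later need: from the reconstructed $\Z$-process $\xi$ plus the quotiented window data you can only pin down the bijection $\psi$ up to the much larger group of permutations commuting with your exhaustion, which (as your $\Z^d$/box example shows) need not be contained in $\overline{\Gamma}_{v_0}$. The commutation relations $\psi(V_k(v))=V_k(\psi(v))$ simply do not characterize $\overline{\Gamma}_{v_0}$. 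Your proposed remedy (record increments, or invoke the \iid{} randomness to break parasitic symmetries) is plausible only in the free case; you have not shown it works for a general semi-nice $\Gamma$, and you say as much.

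The paper sidesteps the rigidity problem by recording, for each $g\in\Gamma_{v,v_0}$, the tuple $\psi_{R_v}(g(W))=(g(W)_{P_{g(W)}^j(v_0)},\,P_{g(W)}^j(v_0))_{|j|\le R_v}$, whose second coordinates are the \emph{actual vertices of $\V$} occupied by the window, not orbit classes. Since $\Gamma_{v,v_0}=\Gamma_{v_0}g_0$, the set $\tilde W_v$ already carries the $\Gamma_{v_0}$-orbit of these assignment maps; no post-hoc rigidity argument is needed. The random window sizes $R_v$ (taken from \cref{lem:big_Z_process} so that $R_{P_W^N(v_0)}>2N$ infinitely often) play the role of your process $K$, ensuring that arbitrarily large windows around $v_0$ are accessible from $\tilde W^\prec$. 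If you wanted to complete your approach, you would essentially have to replace the $\Gamma_v$-orbit of $(W_u,I^W(v,u))_{u\in V_k(v)}$ by the $\Gamma_{v_0}$-orbit of the genuine assignment $u\mapsto(W_u,u)$ on a growing window — at which point you have reproduced the paper's construction. As written, the proposal does not prove the lemma.
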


In the statement above by ``$\Gamma_{v_0} W$ is a topo-finitary function of $\tilde W^\prec$'' we mean that for every finite set $F \subset \V$ almost surely there exists a random $N \in \N$ such that $\{g(W)|_F : g \in \Gamma_{v_0}\}$ is uniquely determined by $(\tilde W^\prec_n)_{|n| \le N}$. In the case where $\Gamma$ acts freely on $\V$, we  could simply take $\tilde W_v = (W_v,g_v)$ where $g_v$ is the unique element of $\Gamma$ such that $g_v(v)$ is the $\prec_W$-successor of $v$.

\begin{proof}

Note that if $W$ is the trivial process the conclusion of the lemma is also trivial, so we can assume $W$ is non-trivial.
Had we allowed $\tilde W$ to take values in an uncountable Polish space, we could have chosen 
$\tilde W $ to be $(\Gamma_{v,v_0} W)_{v \in \V}$.  However this would yield a process taking values in a non-discrete space, which we do not want. Instead, we will record at each $v$ a finite approximation of $\Gamma_{v,v_0} W$, in such a way that the information at other $w$'s allows to reconstruct $\Gamma_{v,v_0}W$ entirely.
Here is the precise description:

Applying \cref{lem:big_Z_process} to the $\mathbb{Z}$-process $W^\prec$, we obtain a $\Z$-process $\tilde R$ as a finitary factor so that $\tilde R_n > 2n$ for  infinitely many positive $n$'s. Define a $(\V,\Gamma)$-process $R$ by $R_v = \tilde R_{I^W(v_0,v)}$ (so that $R^\prec = \tilde R$). Then  $R$ is a finitary factor of $W$, and almost surely $R_{P_W^{n}(v_0)} > 2n$ for infinitely many positive $n$'s.
 

Define $\tilde W$ by
\[ \tilde W_{v} := \big\{ \psi_{R_v}(g(W)) : g \in \Gamma_{v,v_0} \big\},\]
where
\[ \psi_r(W) := \left(W_{P_W^j(v_0)},P_W^j(v_0)\right)_{ |j| \le r} .\]
It can be directly verified that the map $W \mapsto \tilde W$ is equivariant.
Because $R$ and $P_W$ are finitary factors of $W$, it also follows that $\tilde W$ is indeed a finitary factor of $W$.

It remains to show that $\Gamma_{v_0} W$ is a topo-finitary function of $\tilde W^\prec$, that is, that almost surely for any finite $F \subset \V$ there exists a random $N \in \N$ such that $\{ g(W)|_F :g \in \Gamma_{v_0}\}$ is uniquely determined by $(\tilde W_n^{\prec})_{|n| \le N}$. Since, almost surely, $\V =\{ P_W^j(v_0) : j \in \Z \}$ and $R_{P_W^N(v_0)} > 2N$ for infinitely many $N$'s, it in turn suffices to prove that for any such $N$, the set $\{ g(W) \mid_{\{P_{g(W)}^n(v_0)\}_{|n| \le N }} :g \in \Gamma_{v_0}\}$ is uniquely determined by $\tilde W^{\prec}_N$. Indeed, this is straightforward from the definitions.
\end{proof}

We are now ready to prove \cref{thm:main}.

\begin{proof}[Proof of \cref{thm:main}]
Let $\Gamma < \mathit{Perm}(\V)$ be a nice amenable group.
Let $X$ be a countable-valued $(\V,\Gamma)$-process which is a finitary $\Gamma$-factor of an \iid\ process $Y$, and let $\epsilon >0$. Let $W$ be an \iid\ $(\V,\Gamma)$-process independent of $Y$, having entropy at most $\epsilon/2$. By \cref{prop:cycle-free-permutation}, there exists a random invariant $\Z$-type order $\prec_W$ on $\V$ which is a finitary factor of $W$.
As before, for $v \in \V$, let $P_W(v)$ be the $\prec_W$-successor of $v$. 

Apply \cref{lem:from-Gamma-to-Z} to $W$ to obtain a countable-valued $(\V,\Gamma)$-process $\tilde W$ as a finitary factor of $W$ such that $\Gamma_{v_0} W$ is a topo-finitary function of $\tilde W^\prec$.
Then $(X^\prec,Y^\prec,\tilde W^\prec):=(X,Y,\tilde W)^\prec$ is a $\Z$-process such that $Y^\prec$ is an \iid\ process independent of $\tilde W^\prec$.
Let us show that $X^\prec$ is a finitary $\Z$-factor of $(Y^\prec,\tilde W^\prec)$.
Because $X$ is a finitary factor of $Y$, almost surely there exists a finite  $F \subset \V$  such that $Y_F$ determines $X_{v_0}$. 
Because the map from $Y$ to $X$ is $\Gamma$-equivariant, the map from $Y$ to $X_{v_0}$ is $\Gamma_{v_0}$-invariant. Replacing $F$ by  $\Gamma_{v_0} F$, we can assume that $F$ is $\Gamma_{v_0}$-invariant. So almost surely there exists a random $\Gamma_{v_0}$-invariant set $F$ so that $\{ g(Y)_F : g \in \Gamma_{v_0} \} =\Gamma_{v_0} (Y_F)$ already determines $X_{v_0}$.
Since $\tilde W^\prec \mapsto  \Gamma_{v_0}W$ is topo-finitary, for every $n \in \N$ almost surely there exists $N \in \N$ such that $\tilde W^\prec_{[-N.N]}$ determines  $\{ g(W)_{I_{n,g(W)}}:g \in \Gamma_{v_0}\}$, where $I_{n,W}:= \{ v \in \V: |I^W(v_0,v)|< n\}$. 
For every finite set $F$, almost surely there exists $n \in \N$ such that $\{ g(W_{I_{n,g(W)}}):g \in \Gamma_{v_0}\}$ determines 
the possible $\prec_{g(W)}$-orderings of the elements of $F$, as $g$ ranges over $\Gamma_{v_0}$.
Now observe that 
$ \{g(Y)_F :~g \in \Gamma_{v_0}\} $ is determined by $Y^\prec$ together with the possible $\prec_{g(W)}$-orderings of the elements of $F$, as $g$ ranges over $\Gamma_{v_0}$.
We conclude that
 $X^\prec_0$ is a finitary  function of $(Y^\prec,\tilde W^\prec)$, and by  \eqref{eq:prec_equiv} actually $X^\prec$ is a finitary $\mathbb{Z}$-factor of $(Y^\prec,\tilde W^\prec)$.
  Thus, $X^\prec$ is a finitary factor of \iid\ relative to $\tilde W^\prec$ so that \cref{thm:realtive_efficient_finitary_coding_Z} yields a $\Z$-process $\tilde X'$ with $H(\tilde X'_0)< \epsilon/2$ such that $(X^\prec,\tilde X')$ is finitarily isomorphic to an \iid\ $\Z$-process relative to $\tilde W^\prec$.

Now consider the $(\V,\Gamma)$-process $X'$ defined by 
\[
X'_{P^n_W(v_0)} := \tilde X'_{n},\qquad n \in \Z.
\]
We have that $(X')^\prec = \tilde X'$.
Because $P_W$ (and hence $\prec_W$) is a finitary function of $W$, it follows that $(X,X')$ is finitary isomorphic to an \iid\ $(\V,\Gamma)$-process relative to $W$. We conclude that $(X,X',W)$ is finitary isomorphic to an \iid\ $(\V,\Gamma)$-process. The ``moreover'' part of the theorem now follows from \cref{thm:iid-isomorphic}.
\end{proof}

\section{Further remarks and open problems}
\label{sec:conclusion}
We conclude with some remarks open questions and possible further directions that are related to our results.
 
\subsection{Perfectly efficient finitary codings}

The following question asks about the possibility of taking $\epsilon=0$ in \cref{thm:main,thm:amenable_groups_efficient_finitary_factors_of_iid}. For simplicity, we state in for finite-valued $\Z$-processes.
\begin{quest}
Let $X$ be a finite-valued process which is a finitary factor of an \iid\ process.
Does there exist a zero-entropy process $X'$ such that $(X,X')$ is finitarily isomorphic to an \iid\ process? Or is $X$ at least a finitary factor of an \iid\ process with the same entropy as $X$?
\end{quest}
By Gabor's result \cite{gabor2019finitary}, a finite-valued process $X$ which is a finitary factor of \iid\ need not be finitarily isomorphic to \iid\ in general, so that the process $X'$ cannot in general be taken as a trivial process, or even as an independent \iid\ process. One may still wonder if there are additional ``natural conditions'' that imply that $X$ is finitarily isomorphic to an \iid\ process.

\subsection{Coding radius}\label{sec:coding-radius}
Whenever $\V$ admits a  $\Gamma$-invariant metric in which balls are finite (typically the graph metric  when $\V$ is the vertex set of a locally finite graph $G$ and $\Gamma$ is a subgroup of the graph automorphisms of $G$), the \textbf{coding radius} of a finitary map $\pi$ from a $(\V,\Gamma)$-process $X$ to a $(\V,\Gamma)$-process $Y$ is a variable indicating indicating the smallest $R$ such that the restriction of $X$ to a ball of radius $R$ around $v_0 \in \V$  determines $Y_{v_0}$.

For the critical Ising model on $\Z^d$, it is known that the unique Gibbs measure is a finitary factor of \iid, but that the coding radius \emph{cannot} have a finite $d$-th moment~\cite{van1999existence}.
For the high-temperature Ising model on $\Z^d$, it was shown in~\cite{van1999existence} that the unique Gibbs measure is a finitary factor of \iid\ with a coding radius having exponential tails, and it was asked whether it is also a finitary factor of a \emph{finite-valued} \iid\ process with a coding radius having finite $d$-th moment~\cite[Question~2]{van1999existence}. This was answered affirmatively in~\cite{spinka2018finitaryising} with a coding radius having stretched-exponential tails. Our results further show that it is a finitary factor of a finite-valued \iid\ process whose entropy is only slightly larger than that of the Gibbs measure. The following asks whether the two properties can be obtained simultaneously:

\begin{quest}
Is the high-temperature Ising model on $\Z^d$ a finitary factor of a finite-valued \iid\ process whose entropy is only slightly larger than that of the Gibbs measure and with a coding radius having  stretched-exponential tails?
\end{quest}

More generally, one may ask for versions of our results which take into consideration the coding radius, both in the assumptions and the conclusions. For example, the following is (one possible variant) of a such version of \cref{thm:amenable_groups_efficient_finitary_factors_of_iid} for $\Z^d$-processes:

\begin{quest}
Let $X$ be a finite-valued $\Z^d$-process which is a finitary factor of \iid\ with a coding radius having exponential tails. Is $X$ also a finitary factor of a finite-valued \iid\ process with entropy only slightly larger than $X$ and with a coding radius having exponential tails?
\end{quest}

Let us mention that the corresponding question where ``exponential tails'' in replaced with ``bounded'' is false already for $\Z$-processes (even if we were to drop the entropy constraint), as the following simple example demonstrates: take $X_n:=\1_{\{U_n<U_{n+1}\}}$, where $U$ is an \iid\ process consisting of uniform $[0,1]$ random variables. Clearly, $X$ is a block factor of $U$ (equivalently, a finitary factor with bounded coding radius), but $\Pr(X_0=\cdots=X_k) \approx 1/k!$ decays super-exponentially and this is easily seen to be an obstruction for being a block factor of an \iid\ process having even one atom.

\subsection{Finitely dependent  $(\V,\Gamma)$-processes}
In \cite{spinka2019finitely}, it was shown that a finitely dependent $(\V,\Gamma)$-process is a finitary factor of an \iid\ process, where $\Gamma$ is a nice amenable permutation group of automorphisms of an amenable graph whose vertex set is $\V$.
 It seems plausible that for a natural notion of  ``finitely dependent'' for $(\V,\Gamma)$-processes over nice amenable permutation groups, it is the case that such processes are finitarily isomorphic \iid. The methods developed in this paper can be used to obtain some partial results in this direction, but we currently do not know if this holds in full generality.

\begin{quest}
	Are finitely dependent processes over nice amenable permutation groups finitarily isomorphic to \iid?
\end{quest}

The case $\V=\Gamma=\Z^d$ has been  posed as an open problem in \cite{spinka2019finitely}.
\subsection{$(\V,\Gamma)$-processes over nice but non-amenable groups}
The conclusion of \cref{thm:amenable_groups_efficient_finitary_factors_of_iid} and the `in particular' part of \cref{thm:main} are known to hold in some situations without the amenability assumption. For instance, the case when $\V$ is the set of vertices of the  the $d$-regular tree for $d >2$, and $\Gamma$ is the group of automorphisms of the tree basically follows from the methods in \cite{ball2005factors}, where it is shown that every \iid\ process is a  factor of the full 2-shift. Lewis Bowen proved that for any non-amenable countable group $\Gamma$, all non-trivial \iid\ processes factor onto each other~\cite{bowen2019finitaryinterlacments}. The construction in  \cite{bowen2019finitaryinterlacments} does not seem to give a finitary factor in general.

\begin{quest}
	Is it possible to remove the amenability assumption on the permutation group $\Gamma$ in the statements of \cref{thm:amenable_groups_efficient_finitary_factors_of_iid,thm:main}?
\end{quest}

The statements which do not involve finitary isomorphisms, namely, that of \cref{thm:amenable_groups_efficient_finitary_factors_of_iid} and the `in particular' part of \cref{thm:main}, would follow from a positive answer to the following:

\begin{quest}
	Let $\Gamma$ be a nice non-amenable permutation group. Is every non-trivial countable-valued \iid\ process a finitary factor of every other such process?
\end{quest}

\subsection{Which permutation groups are finitarily Ornstein?}

As mentioned in the introduction, it follows from \cite{seward2018bernoulli} that any countable permutation group $\Gamma$ acting transitively and freely on $\V$ is finitarily Ornstein (our results complement the case of countable-valued \iid\ process).  We ask:
\begin{quest}
	Is every nice permutation group finitarily Ornstein?
\end{quest}

In view of \cref{thm:iid-isomorphic}, the problem amounts to showing that any $2$-valued \iid\ $(\V,\Gamma)$-process is finitarily isomorphic to some equal entropy \iid\ $(\V,\Gamma)$-process taking more than two values. This seems to be open even in the case where $\Gamma$ is automorphism group of the Cayley graph of $\Z^d$.

\subsection{Which permutation groups are Kolmogorov?}

We say that a group of permutations $\Gamma$ of $\V$ is \textbf{(finitarily) Kolmogorov} if any two finitarily isomorphic \iid\ $(\V,\Gamma)$-processes have equal entropy.
There is the question of equal entropy being a necessary condition for (finitarily) isomorphism: 

\begin{quest}\label{quest:entropy_invariant}
	Which permutation groups $\Gamma$ are (finitarily) Kolmogorov?
\end{quest}

We note that being Kolmogorov is stronger than being finitarily Kolmogorov, and that each of the two properties is preserved when moving to a larger permutation group.
In the case where $\Gamma$ is a cyclic group generated by a transitive permutation of $\V$, this question, which goes back to von Newmann in the early years of measurable dynamics, was answered by Kolmogorov, who famously introduced dynamical entropy of general probability measure-preserving transformations. 
A variation of Kolmogorov's original argument shows that every semi-nice unimodular amenable permutation group is Kolmogorov (see \cref{sec:entropy}).
Already for classical $\Gamma$-processes, it is a major open problem to determine whether isomorphic \iid\ $\Gamma$-processes have equal entropy for any countable group $\Gamma$. See \cite{seward2019kriegerII} for remarkable (hypothetical) consequences of a solution to this problem.
Simple examples of permutation groups which are not Kolmogorov (or even finitarily Kolmogorov) are finite permutation groups of a countably infinite set $\V$.
There are also examples of non-Kolmogorov permutation groups in which all orbits are infinite. For instance, when $\V = \mathbb{Z}^2$ and $\Gamma \cong \mathbb{Z}$ is the group generated by a translation by a single non-zero element of $\mathbb{Z}^2$, any pair of non-trivial $(\V,\Gamma)$-processes are finitarily isomorphic (they are all isomorphic to an infinite-entropy Bernoulli shift over $\mathbb{Z}$).
We are not aware of any  non-Kolmogorov permutation groups which are semi-nice and unimodular.

\bibliographystyle{amsplain}
\bibliography{library}
\end{document}